\documentclass[11pt]{amsart}
\usepackage{amssymb,amsfonts,amsthm,amscd,stmaryrd,dsfont,esint,upgreek}
\usepackage[numbers]{natbib}
\usepackage[mathcal]{euscript}
\usepackage[usenames]{color}
\setlength{\bibsep}{0.15em}



\usepackage[top=1.25in,bottom=1.5in, left=1.25in,right=1.25in]{geometry}
\setlength{\parskip}{\smallskipamount}

\theoremstyle{plain}
\newtheorem{thm}{Theorem}

\newtheorem{lem}[thm]{Lemma}
\newtheorem{prop}[thm]{Proposition}

\theoremstyle{definition}
\newtheorem{definition}[thm]{Definition}
\newtheorem{remark}[thm]{Remark}

\usepackage{color} 
\usepackage[usenames,dvipsnames]{xcolor}

\definecolor{orange}{RGB}{255,127,0}


\numberwithin{thm}{section}
\numberwithin{equation}{section}


\newcommand{\R}{\mathbb{R}}

\newcommand{\N}{\mathbb{N}}
\newcommand{\Z}{\mathbb{Z}}
\newcommand{\Sy}{\mathbb{S}^\d}

\renewcommand{\d}{d}
\newcommand{\m}{m}
\newcommand{\db}{\mathrm{d}}

\newcommand{\Rd}{\mathbb R^\d}

\newcommand{\ep}{\varepsilon}

\renewcommand{\tilde}{\widetilde}

\DeclareMathOperator*{\osc}{osc}

\DeclareMathOperator{\tr}{tr}

\DeclareMathOperator{\USC}{USC}
\DeclareMathOperator{\LSC}{LSC}

\DeclareMathOperator{\dist}{dist}



\def\XXint#1#2#3{{\setbox0=\hbox{$#1{#2#3}{\int}$}
     \vcenter{\hbox{$#2#3$}}\kern-.5\wd0}}

\begin{document}

\title[Viscous Hamilton-Jacobi equations]{Viscosity solutions of general viscous Hamilton-Jacobi equations}

\author[S. N. Armstrong]{Scott N. Armstrong}
\address{CEREMADE (UMR CNRS 7534), Universit\'e Paris-Dauphine, Paris, France}
\email{armstrong@ceremade.dauphine.fr}

\author[H. V. Tran]{Hung V. Tran}
\address{Department of Mathematics\\
The University of Chicago\\ 5734 S. University Avenue Chicago, Illinois 60637, USA}
\email{hung@math.uchicago.edu}

\date{\today}
\keywords{viscous Hamilton-Jacobi equation, viscosity solution, comparison principle, Lipschitz estimate, state constraints, Perron method}
\subjclass[2010]{35D40, 35B51}

\begin{abstract}
We present comparison principles, Lipschitz estimates and study state constraints problems for degenerate, second-order Hamilton-Jacobi equations. 
\end{abstract}

\maketitle

\section{Introduction} \label{I}

\subsection{Motivation and summary of results}
In this paper we present the basic PDE theory of viscosity solutions of the second-order ``viscous" Hamilton-Jacobi equation
\begin{equation}\label{e.vhj}
u_t -  \tr(A(x) D^2u) + H(Du,x) = 0  \ \quad \mbox{in} \ \Rd\times (0,\infty)
\end{equation}
as well as the time-independent analogues such as
\begin{equation}\label{e.vhj2}
u -  \tr(A(x) D^2u) + H(Du,x) = 0  \ \quad \mbox{in} \ \Rd.
\end{equation}
The core assumptions are that the Hamiltonian $H:\Rd \times \Rd \to\R$ is convex and superlinear (``coercive") in its first variable and the diffusion matrix $A:\Rd \to \Sy$ is degenerate elliptic (i.e., nonnegative definite and uniformly bounded). A typical example is
\begin{equation*}\label{}
u_t - a(x)^2 \Delta u + b(x)|Du|^2 = 0
\end{equation*}
with Lipschitz coefficients $a:\Rd\to [0,1]$ and $b:\Rd \to [1,2]$. Particular members of this family of partial differential equations arise in deterministic and stochastic optimal control, dynamical systems, and the study of large-scale behavior of diffusions in heterogeneous environments.

The basic theory of~\eqref{e.vhj} and~\eqref{e.vhj2} is still incomplete in many important respects, despite having received much attention over the last 25 years. This is primarily due to the difficulties imposed by the combination of a heterogeneous (and in general, degenerate) diffusion and a heterogeneous Hamiltonian, which is precisely the sort of situation encountered in stochastic homogenization. Indeed, the motivation for writing this paper originated in the theory of stochastic homogenization and our discovery that some results needed for the latter were at best ``folklore" (known to some experts but not appearing in the literature) and many others were simply open problems. 

The first contribution of this paper is a general comparison principle for these equations: previously, comparison principles were known for~\eqref{e.vhj} or~\eqref{e.vhj2} in special cases (e.g., if $A\equiv 0$, or if the dependence of $H$ on $Du$ and $x$ is decoupled in a weak sense, or $H$ is superquadratic in $Du$, or if $A$ is uniformly positive definite, among other special circumstances, see below for more discussion). Note that~\eqref{e.vhj} is not under the classical comparison regime of~\cite{CIL} since hypothesis (3.14) of that paper does not hold in general, as in the case that the Hamiltonian has a term like $b(x)|Du|^2$. The comparison principle is given in Section~\ref{MCP}.

In Section~\ref{S.A}, we present interior Lipschitz estimates for continuous solutions of~\eqref{e.vhj} and~\eqref{e.vhj2}. These estimates do not use the convexity of $H$ and so they hold for a general class of nonconvex equations. Such estimates are well-known to experts, at least in special cases.  The novelty here is that we give a new argument which is both robust enough to handle general equations and gives an~\emph{explicit} Lipschitz constant in terms of the structural hypotheses (and is essentially optimal). Such explicit estimates are important for stochastic homogenization (c.f.~\cite{AT}).

Finally, we present new results for solutions of~\eqref{e.vhj2} in bounded domains subject to \emph{state constrained boundary conditions}. This is a much-studied topic that originated in the work of Lasry and Lions~\cite{LL} and has important applications in stochastic optimal control. Due to difficulties arising in handling these special boundary conditions, there are few results for equation with anisotropic diffusions (it is usually assumed that the diffusion term vanishes or else is the Laplacian) unless $H$ grows superlinearly (which is essentially the same as $A\equiv 0$). Handling a constant diffusion matrix is easier, because in this case it is possible to obtain a precise blow-up rate for solutions near the boundary of the domain-- which then allows for comparison arguments. Here we introduce a new idea which allows us to obtain partial comparison for the state constraints problem for a general class of equations (with a general degenerate, anisotropic $A$). In particular, we show that there exists a unique maximal solution which is continuous (and hence Lipschitz). We develop an analogous theory for the \emph{metric problem}, motivated by problems in stochastic homogenization. These results can be found in Sections~\ref{BVSC} and~\ref{s.mp}.

\subsection{Hypotheses on the coefficients}
\label{ss.hypos}

The following conditions on the coefficients are assumed to be in force throughout the paper. We fix parameters $\m> 1$, $n\in \N$, $\Lambda_1\geq 1$ and $\Lambda_2 \geq 0$. We require the diffusion matrix $A:\Rd\to \Sy$ to have a Lipschitz square root, that is, there exists $\sigma :\Rd \to \R^{\d\times n}$ such that 
\begin{equation*} \label{}
A = \frac12 \sigma^t \sigma
\end{equation*}
and, for every $x,y\in B_2$, we have
\begin{equation}\label{e.sigbnd}
\left| \sigma(x)\right| \leq \Lambda_2
\end{equation}
and
\begin{equation}\label{e.siglip}
\left| \sigma(x) - \sigma(y) \right| \leq \Lambda_2 |x-y|.
\end{equation}
As for the Hamiltonian $H:\Rd \times \Rd \to \R$, we require the following: for every $x\in \Rd$,
\begin{equation}\label{e.Hconvex}
p\mapsto H(p,x) \quad \mbox{is convex.}
\end{equation}
For every $R>0$, there exist constants $0< a_R \leq 1$ and $M_R \geq 1$ such that, for every $p,q\in \Rd$ and $x,y\in B_R$,
\begin{equation}\label{e.Hsubq}
 a_R |p|^{\m} - M_R \leq H(p,x) \leq \Lambda_1\big( |p|^\m+1 \big),
\end{equation}
\begin{equation}\label{e.HsubqLip}
\left| H(p,x) - H(p,y) \right| \leq \big( \Lambda_1 |p|^\m + M_R \big) |x-y|, 
\end{equation}
and
\begin{equation}\label{e.HsubqDp}
\left| H(p,x) - H(q,x) \right| \leq \Lambda_1 \big( |p| + |q| + 1 \big)^{\m-1} |p-q|. 
\end{equation}
If the constants $a_R$ and $M_R$ can be chosen to be independent of $R$, then we say that the Hamiltonian is \emph{uniformly coercive}. Otherwise, we say that $H$ is weakly coercive. 

We emphasize that the diffusion matrix $A$ can be degenerate in general, 
and \eqref{e.HsubqLip} holds for some given $m>1$, which is more general than
hypothesis (3.14) in \cite{CIL}.

\subsection{Viscosity solution preliminaries}\textbf{}
Unless otherwise indicated, each of the differential inequalities in this paper are to be interpreted in the \emph{viscosity} sense, which is the usual notion of weak solution for Hamilton-Jacobi equations. The reader may consult~\cite{CIL}.

For technical reasons, it is convenient to work with the well-known extension (introduced in~\cite{BP}) of the definition of viscosity solutions to possibly discontinuous, locally bounded functions. We recall the definitions for the readers' convenience. If $u: V \to \R$ is locally bounded, then the \emph{upper semicontinuous envelope} $u^\ast$ of $u$ in $V$ is defined for $x\in \overline V$ by
\begin{equation*} \label{}
u^\ast(x):= \inf\left\{ w(x) \, : \, w\in \USC(\overline V) \ \mbox{and} \ w\geq u \right\} = \inf_{\delta > 0}  \sup_{B_\delta(x)} u.
\end{equation*}
Here $\USC(V)$ denotes the set of upper semicontinuous functions on $V$, taking values in $\R \cup\{ +\infty\}$, and we note that $u^\ast$ belongs to $\USC(\overline V)$ since $\USC(\overline V)$ is closed under taking infimums. We likewise define the \emph{lower semicontinuous envelope} $u_\ast\in \LSC(V)$ of $u$ in $V$ by $u_\ast:= -(-u)^\ast$.

\begin{definition}[Viscosity solution]\label{defvs}
We say that a function $u$ is a \emph{viscosity subsolution (solution)} of the differential equation (inequality) 
\begin{equation*} \label{}
u_t - \tr\left( A(x) D^2u \right) + H\!\left( Du,x \right) = \ \mbox{($\leq$)} \ 0 \quad \mbox{in} \ V \subseteq \Rd \times \R_+
\end{equation*}
if $u:V \to \R$ is locally bounded from above and, for every $(y,s) \in V$ and smooth function $\varphi$ which is defined in a neighborhood of $(y,s)$ such that
\begin{equation} \label{def.vsup}
(x,t)\mapsto \left( u^\ast - \varphi\right) (x,t) \quad \mbox{has a local maximum at} \ (x,t) = (y,s),
\end{equation}
then we have
\begin{equation*} \label{}
\varphi_t(y,s) - \tr\left( A(y) D^2\varphi(y,s) \right) + H\!\left( D\varphi(y,s),y \right) \leq  0.
\end{equation*}
Likewise, $u:V\to \R$ is a \emph{viscosity supersolution (solution)} of the differential equation (inequality) 
\begin{equation} \label{def.vsdn}
u_t - \tr\left( A(x) D^2u \right) + H\!\left( Du,x \right) = \ \mbox{($\geq$)} \ 0 \quad \mbox{in} \ V 
\end{equation}
if $u$ is locally bounded from below and, for every $(y,s) \in V$ and smooth function $\varphi$ which is defined in a neighborhood of $(y,s)$ such that
\begin{equation*} \label{}
(x,t)\mapsto \left( u_\ast - \varphi\right) (x,t) \quad \mbox{has a local minimum at} \  (x,t)=(y,s),
\end{equation*}
then we have
\begin{equation*} \label{}
\varphi_t(y,s) - \tr\left( A(y) D^2\varphi(y,s) \right) + H\!\left( D\varphi(y,s),y \right) \geq  0.
\end{equation*}
We say that $u:V\to \R$ is a \emph{viscosity solution} of 
\begin{equation*} \label{}
u_t - \tr\left( A(x) D^2u \right) + H\!\left( Du,x \right) = 0 \quad \mbox{in} \ V 
\end{equation*}
if $u$ is locally bounded and both a viscosity subsolution and supersolution. 
\end{definition}

The definition of viscosity solution for other equations considered here (e.g. equations with no time dependence) is identical. We remark that there is a well-known equivalence between the definition above and the alternative (weaker) definition in which the local maxima/minima in~\eqref{def.vsup}--\eqref{def.vsdn} are strict.

\section{Comparison principles}
\label{MCP}

\subsection{Comparison principles for stationary problems}

We present comparison results for time-independent problems. The arguments combine several ingredients, none of which are new. Besides the classical comparison argument for viscosity solutions~\cite{CIL}, we need an idea based on the convexity of $H$ that goes back at least to Barles and Perthame~\cite{BP} and appears in a form closer to our argument in Da Lio and Ley~\cite{DaL1}. See also Barles and Da Lio~\cite{BF} as well as \cite{DaL2,KL,AS1,BBBL}, wherein special cases of the results presented in this section can also be found. See also Kobylanski~\cite{K} for some related probabilistic results.

\begin{thm}\label{a.comp.mono}
Let $\delta > 0$, $U\subseteq \Rd$ be open and assume that~$u,-v\in \USC(\overline U)$ satisfy
\begin{equation}\label{e.comp.deq}
\delta u - \tr\left( A(x) D^2u\right) + H(Du,x) \leq 0 \leq \delta v - \tr\left( A(x) D^2v\right) + H(Dv,x) \quad \mbox{in} \ U
\end{equation}
as well as
\begin{equation}\label{e.comp.growth}
u\leq v\quad \mbox{on} \ \partial U, \quad  \mbox{and} \quad \limsup_{x \in U, \ |x| \to \infty} \frac{u(x)}{1+|x|} \leq 0 \leq \liminf_{x \in U, \ |x| \to \infty} \frac{v(x)}{1+|x|}.
\end{equation}
Then $u\leq v$ in $U$.
\end{thm}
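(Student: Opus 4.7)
I argue by contradiction, assuming $M := \sup_U(u-v) > 0$, using doubling of variables combined with the Barles--Perthame/Da Lio--Ley convexity trick to replace $u$ with a strict subsolution. The sublinear growth~\eqref{e.comp.growth} gives a standard localization: for $\ep > 0$, $u(x) - v(x) - \ep(1+|x|^2)^{1/2} \to -\infty$ as $|x|\to\infty$ in $U$, so its supremum is attained at an interior point in some ball $B_R$ and remains positive for $\ep$ small. After this reduction I may treat $u,v$ as bounded on $U\cap B_R$ and assume the sup is attained in the interior.

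\emph{Convex regularization.} Since~\eqref{e.Hsubq} yields $H(0,\cdot) \leq \Lambda_1$ on $B_R$, for $K$ large the constant $w \equiv -K$ is a classical strict subsolution in $B_R$ with some margin $\eta > 0$: $-\delta K + H(0,x) \leq -\eta$. For $\theta \in (0,1)$, define $u^\theta := \theta u + (1-\theta) w$. A direct viscosity test-function argument using convexity of $H$ in $p$~\eqref{e.Hconvex} and linearity of the map $u\mapsto \delta u - \tr(A D^2 u)$ shows
\begin{equation*}
\delta u^\theta - \tr(A\,D^2 u^\theta) + H(Du^\theta, x) \leq -(1-\theta)\eta \quad \text{in the viscosity sense}.
\end{equation*}
For $\theta$ close enough to $1$, one still has $\sup_{B_R}(u^\theta - v) > 0$, attained at some $\overline x$, and also $u^\theta \leq v$ on $\partial U$ (after choosing $K$ large enough).

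\emph{Doubling and Ishii's lemma.} Apply the penalization $\Phi_\alpha(x,y) := u^\theta(x) - v(y) - \alpha|x-y|^2$ on $\overline{B_R}\times\overline{B_R}$; its max $(x_\alpha,y_\alpha)$ satisfies $\alpha|x_\alpha-y_\alpha|^2 \to 0$ and converges to $(\overline x,\overline x)$. Ishii's lemma from~\cite{CIL} produces symmetric $X_\alpha, Y_\alpha$ satisfying the usual block inequality, and \eqref{e.siglip} yields the standard trace correction $\tr(A(x_\alpha)X_\alpha) - \tr(A(y_\alpha)Y_\alpha) \leq C \alpha |x_\alpha-y_\alpha|^2 \to 0$. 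With $p_\alpha := 2\alpha(x_\alpha - y_\alpha)$, subtracting the two viscosity inequalities gives
\begin{equation*}
\delta\bigl(u^\theta(x_\alpha) - v(y_\alpha)\bigr) + \bigl[H(p_\alpha, x_\alpha) - H(p_\alpha, y_\alpha)\bigr] \leq C \alpha|x_\alpha-y_\alpha|^2 - (1-\theta)\eta,
\end{equation*}
with the first LHS term tending to $\delta\sup(u^\theta - v) > 0$ as $\alpha \to \infty$.

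\emph{Main obstacle.} The critical difficulty is absorbing the Hamiltonian error $|H(p_\alpha,x_\alpha) - H(p_\alpha, y_\alpha)|$, which~\eqref{e.HsubqLip} bounds only by $(\Lambda_1 |p_\alpha|^\m + M_R)|x_\alpha - y_\alpha|$; because $|p_\alpha|^\m|x_\alpha - y_\alpha|$ need not vanish, this is exactly the failure of~(3.14) of~\cite{CIL} highlighted in the introduction. The plan to handle it is to exploit the coercivity of $H$~\eqref{e.Hsubq} in the \emph{supersolution} inequality for $v$ together with the matrix inequality from Ishii's lemma to extract an a priori $\alpha$-uniform bound on $|p_\alpha|$ (a sup-convolution regularization of $u$ may be required to enforce the semiconcavity needed for this extraction). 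Once $|p_\alpha|$ is controlled, the Hamiltonian error becomes $O(|x_\alpha - y_\alpha|) = o(1)$ and the strict margin $(1-\theta)\eta > 0$ yields the contradiction; passing $\theta \to 1$ and $\ep \to 0$ then gives $u \leq v$.
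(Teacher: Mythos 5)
Your setup (doubling of variables, penalization at infinity via $\ep(1+|x|^2)^{1/2}$, convex combination of $u$ with a constant strict subsolution) matches the structure of the paper's proof closely — the paper's auxiliary function is $su(x) - v(y) - \frac{1}{2\ep}|x-y|^2 - \eta(1+|x|^2)^{1/2}$, which is essentially your $u^\theta$ with $\theta = s$. But there is a real gap in how you propose to handle the dangerous term $|p_\alpha|^m|x_\alpha - y_\alpha|$ in the ``Main obstacle'' paragraph, and that gap is the entire content of the theorem.

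You propose to extract an $\alpha$-uniform bound on $|p_\alpha|$ from the coercivity of $H$ in the supersolution inequality together with the Ishii matrix inequality, possibly after sup-convolution. This does not work in the generality considered here. When $A \equiv 0$, the supersolution inequality $\delta v(y_\alpha) + H(p_\alpha,y_\alpha) \geq 0$ combined with $H \geq a|p|^m - M$ does give $|p_\alpha| \leq C$; but for $A \not\equiv 0$ the term $\tr\!\left(A(y_\alpha) Y_\alpha\right)$ intervenes, and the Ishii lemma only bounds $Y_\alpha$ by $O(1/\alpha)$ (or $O(1/\ep)$ in the paper's normalization), which blows up. Sup-convolving $u$ makes $X_\alpha$ bounded from \emph{below}, which controls $\tr(AX_\alpha)$ from below but not from above — the wrong direction. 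Inf-convolving $v$ would give $Y_\alpha \leq \lambda^{-1}I$ and hence a $\lambda$-dependent bound $|p_\alpha| \leq C_\lambda$, but then one must show the inf-convolution of a supersolution remains an (approximate) supersolution of a nearby equation and then pass $\lambda \to 0$; you have not verified this, and the resulting error must be balanced against the fixed margin $(1-\theta)\eta$ in a way that is not obviously compatible.

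The paper's insight is that no such bound on $|p_\alpha|$ is needed, because the convexity argument yields far more than a constant margin $-(1-\theta)\eta$. In your derivation you estimate the ``convexity gap'' $G := \theta H(\theta^{-1}p_\alpha, x_\alpha) - H(p_\alpha,x_\alpha)$ merely by $G \geq -(1-\theta)H(0,x_\alpha)$ and throw the rest away. The paper instead shows (their inequality~\eqref{e.Gclaim}, using convexity once more together with the growth hypotheses~\eqref{e.Hsubq}) that
\begin{equation*}
G \geq c\,(1-\theta)\,|p_\alpha|^m - C\,(1-\theta),
\end{equation*}
a margin that \emph{grows} like $|p_\alpha|^m$. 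This term dominates the error $\Lambda_1|p_\alpha|^m |x_\alpha - y_\alpha|$ coming from~\eqref{e.HsubqLip} once $|x_\alpha - y_\alpha| \to 0$ (at fixed $\theta$), exactly because both scale the same way in $|p_\alpha|$. That is the mechanism that replaces hypothesis~(3.14) of~\cite{CIL}. If you keep $G$ in your inequality for $u^\theta$ rather than discarding it (so that the strict-subsolution property of $u^\theta$ has a margin $\geq c(1-\theta)|Du^\theta|^m - C(1-\theta)$, not just a constant), your argument goes through with no bound on $|p_\alpha|$ and no sup-/inf-convolution at all.
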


We also give a result for equations with no zeroth-order term under a strictness condition. 

\begin{thm}\label{a.comp.strict}
Let $\theta > 0$, $U\subseteq \Rd$ be open and bounded, and $u,-v\in \USC\big(\overline U \big)$ satisfy
\begin{equation*}\label{}
 - \tr\left( A(x) D^2u\right) + H(Du,x) \leq -\theta < 0 \leq - \tr\left( A(x) D^2v\right) + H(Dv,x) \quad \mbox{in} \ U.
\end{equation*}
Then
\begin{equation*}\label{}
\sup_{U} \,(u-v) \leq \sup_{\partial U}\, (u-v).
\end{equation*}
\end{thm}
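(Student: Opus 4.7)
I would argue by contradiction using the standard doubling-of-variables method combined with a convex-combination trick (in the spirit of Barles--Perthame and Da Lio--Ley) to overcome the failure of hypothesis~(3.14) of \cite{CIL} caused by the factor $|p|^\m$ in \eqref{e.HsubqLip}.

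Suppose for contradiction that $\sup_U(u-v) > \sup_{\partial U}(u-v)$. Since $u,-v \in \USC(\overline U)$ and $U$ is bounded, the supremum is attained at some interior point $\hat x$. After perturbing $v$ by a small quadratic penalty centered at $\hat x$ and absorbing the cost into $\theta \leadsto \theta/2$, I may assume the interior maximum is strict. For small $\varepsilon>0$, let $(x_\varepsilon, y_\varepsilon)$ maximize $\Phi_\varepsilon(x,y) := u(x) - v(y) - |x-y|^2/(2\varepsilon)$ on $\overline U \times \overline U$. Standard arguments give $(x_\varepsilon, y_\varepsilon) \to (\hat x,\hat x)$ in the interior and $|x_\varepsilon-y_\varepsilon|^2/\varepsilon \to 0$. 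The Crandall--Ishii lemma produces $X_\varepsilon, Y_\varepsilon \in \Sy$ and $p_\varepsilon := (x_\varepsilon-y_\varepsilon)/\varepsilon$ with $\langle X_\varepsilon a,a\rangle - \langle Y_\varepsilon b,b\rangle \le (3/\varepsilon)|a-b|^2$ for all $a,b\in\Rd$, and
\begin{equation*}
-\tr\bigl(A(x_\varepsilon)X_\varepsilon\bigr) + H(p_\varepsilon, x_\varepsilon) \le -\theta/2, \qquad -\tr\bigl(A(y_\varepsilon)Y_\varepsilon\bigr) + H(p_\varepsilon, y_\varepsilon) \ge 0.
\end{equation*}
Testing the matrix bound with $a=\sigma(x_\varepsilon)e_i$, $b=\sigma(y_\varepsilon)e_i$ and using~\eqref{e.siglip} yields $\tr(A(x_\varepsilon)X_\varepsilon - A(y_\varepsilon)Y_\varepsilon) \le (3\Lambda_2^2/2)|x_\varepsilon-y_\varepsilon|^2/\varepsilon \to 0$. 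Subtracting the two viscosity inequalities gives
\begin{equation*}
H(p_\varepsilon, x_\varepsilon) - H(p_\varepsilon, y_\varepsilon) \le -\theta/2 + o(1). \qquad (\ast)
\end{equation*}

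The main obstacle is that \eqref{e.HsubqLip} only bounds the LHS of $(\ast)$ by $(\Lambda_1|p_\varepsilon|^\m + M_R)|x_\varepsilon-y_\varepsilon|$, and a priori $|p_\varepsilon|$ can diverge as $\varepsilon \to 0$ (at worst like $\varepsilon^{-1/\m}$, which is all that coercivity applied to the subsolution inequality provides). To resolve this, I would exploit the convexity hypothesis~\eqref{e.Hconvex}: since $(X,p)\mapsto -\tr(A(x)X)+H(p,x)$ is convex in $(X,p)$, the function $u^\lambda := \lambda u$ is a viscosity subsolution of the same equation with right-hand side $\le -\lambda\theta + (1-\lambda)\sup_{\overline U} H(0,\cdot)$, which remains strictly negative with margin $\theta/4$ for $\lambda$ sufficiently close to $1$. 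The resulting $(1-\lambda)$ slack then permits replacing $u^\lambda$ with a Lipschitz sup-convolution $\widetilde u$, which is a strict subsolution of a slightly perturbed equation with a uniform gradient bound $L_\delta$; rerunning the doubling argument with $\widetilde u$ in place of $u$ forces $|p_\varepsilon|\le L_\delta$, so that $|p_\varepsilon|^\m|x_\varepsilon-y_\varepsilon| \to 0$ and $(\ast)$ yields $0 \le -\theta/8$, the desired contradiction. Sending $\lambda \to 1^-$ and the quadratic penalty to zero at the end recovers the conclusion for the original $u, v$.

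The hardest step is the Lipschitz approximation: the sup-convolution perturbs the $x$-arguments of $A$ and $H$, producing error terms (bounded via \eqref{e.siglip} and \eqref{e.HsubqLip} by expressions of the form $(|p|^\m + 1)\sqrt{\delta}$) that must be absorbed into the $(1-\lambda)$ strict-subsolution slack. Making a joint choice of $\lambda$ and $\delta$ compatible with the superlinear growth of $H$ is precisely where the convexity of $H$ enters essentially, by allowing the key cancellations in these error estimates.
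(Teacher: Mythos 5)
Your approach misses the paper's much shorter argument and contains a gap that I do not think can be repaired as stated.

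\textbf{What the paper does.} Theorem~\ref{a.comp.strict} is deduced in two lines from Theorem~\ref{a.comp.mono}: since $U$ is bounded, $u$ and $v$ are bounded on $\overline U$, so one can pick $\delta>0$ so small that $\delta\|u\|_\infty, \delta\|v\|_\infty \leq \theta/2$; then $\delta u-\tr(AD^2u)+H(Du,x)\leq -\theta/2 \leq \delta v - \tr(AD^2v)+H(Dv,x)$, and Theorem~\ref{a.comp.mono} applies directly. You instead re-prove the comparison machinery from scratch, which is both longer and, as explained below, does not close.

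\textbf{The gap.} Your key step is to replace $u^\lambda$ by its sup-convolution $\widetilde u$ and absorb the resulting PDE error into the fixed slack $\tfrac14\theta\sim(1-\lambda)$. This does not work: the Lipschitz constant of the sup-convolution scales like $L_\delta\sim\delta^{-1/2}$, and the shift $|x-x^*|\lesssim \delta^{1/2}$, so the first-order error $(\Lambda_1|p|^\m+M_R)|x-x^*|$ is of order $\delta^{(1-\m)/2}$, which \emph{diverges} as $\delta\to 0$ for $\m>1$. In parallel, the Crandall--Ishii matrices in the doubling argument satisfy $|X_\varepsilon|\lesssim 1/\varepsilon$, and the mismatch between the evaluation point $x_\varepsilon^*$ (where the sup-convolution's viscosity inequality lives) and $x_\varepsilon$ (where the penalization's matrix inequality lives) produces a second-order error of order $\sqrt\delta/\varepsilon$, which diverges as $\varepsilon\to 0$ for any fixed $\delta$. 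Neither error can be absorbed by a fixed slack, and no "joint choice of $\lambda$ and $\delta$" resolves the tension, since $\lambda$ does not feed back into the size of $L_\delta$. Convexity alone does not rescue a regularization-based argument here.

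\textbf{The correct mechanism, for comparison.} The paper's proof of the companion result Theorem~\ref{a.comp.mono} (which you should simply invoke) never regularizes the subsolution. It rescales $u\mapsto su$ for $s<1$ and then shows, using convexity and coercivity~\eqref{e.Hsubq}, that the ``good'' term $G_\varepsilon:=\tfrac{s}{r}H(\tfrac{r}{s}p_\varepsilon,x_\varepsilon)-H(p_\varepsilon,x_\varepsilon)$ obeys $G_\varepsilon\geq c_{\eta,s}(1-s)|p_\varepsilon|^\m-C(1-s)$ (estimate~\eqref{e.Gclaim}), which dominates the dangerous term $\Lambda_1|p_\varepsilon|^\m|x_\varepsilon-y_\varepsilon|$ once $|x_\varepsilon-y_\varepsilon|\to 0$, with \emph{no a priori gradient bound on any function}. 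That is a genuinely different, and sharper, use of convexity than a Lipschitz approximation.
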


\begin{proof}[{\bf Proof of Theorem~\ref{a.comp.mono}}]

We argue by contradiction: assume that $u > v$ at some point of~$U$, which by translation we may assume to be the origin.

{\it Step 1.} We setup the argument.  Denote $\theta :=u(0)-v(0)>0$. Fix  $\eta, s, \ep>0$ satisfying
\begin{equation*} \label{}
0< \eta < \frac14 \theta, \quad \frac12 < s < 1 \quad \mbox{and} \quad 0< \ep < 1.
\end{equation*}
In the course of the argument, we will send $\ep \to 0$, $\eta \to 0$ and then $s \to 1$, in that order. Throughout we display the dependence of the constants on these parameters.

Consider the auxiliary function $\Phi:\overline U \times \overline U \to \R$ defined by
\begin{equation*} \label{}
\Phi(x,y):= su(x) - v(y) -\frac{1}{2\ep}|x-y|^2 - \eta\underbrace{ \left( 1+|x|^2\right)^{\frac12}}_{=:\phi(x)}.
\end{equation*}
If $\eta>0$ is sufficiently small and $s<1$ is sufficiently close to $1$, then 
\begin{equation*} \label{}
\sup_{U\times U} \Phi \geq \Phi(0,0) = su(0)-v(0) - \eta \geq \frac12\theta 
\end{equation*}
and therefore, by~\eqref{e.comp.growth} and the linear growth of $\phi$, there exist $(x_\ep,y_\ep) \in \overline U\times \overline U$ with $|x_\ep|, |y_\ep| \leq C_{\eta,s}$ such that 
\begin{equation} \label{e.ponk}
\Phi(x_\ep,y_\ep) = \sup_{U\times U} \Phi < +\infty.
\end{equation}
According to~\cite[Lemma~3.1]{CIL}, there exists $x_0\in \overline U$ with $|x_0| \leq C_{\eta,s}$ such that 
\begin{equation} \label{e.absurd}
su(x_0) - v(x_0) - \eta \left( 1+|x_0|^2\right)^{\frac12}  = \sup_{x\in U} \left( su(x) - v(x) - \eta \left( 1+|x|^2\right)^{\frac12}  \right)    \geq \frac12\theta
\end{equation}
and, up to a subsequence, 
\begin{equation} \label{e.xeyeplg}
\lim_{\ep \to 0} \, (x_\ep,y_\ep) = (x_0,x_0) \qquad \mbox{and} \qquad \lim_{\ep\to 0}\frac{|x_\ep-y_\ep |^2}{\ep}  = 0.
\end{equation}
If $s< 1$ is sufficiently close to $1$, then we have $x_0 \in U$ by~\eqref{e.absurd} and the first condition in~\eqref{e.comp.growth}, and therefore $(x_\ep,y_\ep) \in U\times U$ for sufficiently small $\ep> 0$. According to~\eqref{e.comp.deq},~\eqref{e.ponk} and the Crandall-Ishii Lemma~\cite[Lemma 3.2]{CIL}, there exist~$X_\ep, Y_\ep \in \Sy$ satisfying
\begin{equation}\label{suv-CI}
-\frac{3}{\ep}\begin{pmatrix} I_d &0\\ 0&I_d \end{pmatrix} \le \begin{pmatrix} X_\ep &0\\ 0&-Y_\ep \end{pmatrix} \leq \frac{3}{\ep}\begin{pmatrix} I_d &-I_d\\ -I_d&I_d \end{pmatrix}
\end{equation}
as well as
\begin{equation}\label{suv-test2}
\delta s u(x_\ep) - \tr \left (A(x_\ep) \left(X_\ep+\eta D^2 \phi(x_\ep) \right )\right) + sH\left( \frac{x_\ep-y_\ep}{\ep s} + \frac{\eta}{s} D\phi(x_\ep) ,\,x_\ep \right) \leq 0
\end{equation}
and
\begin{equation}\label{suv-test3}
\delta v(y_\ep) - \tr \left (A(y_\ep) Y_\ep\right ) + H\left( \frac{x_\ep-y_\ep}{\ep},y_\ep \right) \geq 0.
\end{equation}
The goal is to use~\eqref{e.absurd},~\eqref{suv-CI} and the structural conditions on $H$ to derive a contradiction by showing that the difference of the left sides of~\eqref{suv-test2} and~\eqref{suv-test3} must be positive after sending $\ep \to 0$, $\eta \to 0$, and then $s\to 1$.

{\it Step 2.} We estimate the difference between the terms involving $H$ on the left of~\eqref{suv-test2} and~\eqref{suv-test3}, respectively. This is the step of the argument which is unusual and departs from~\cite{CIL}; it relies on the convexity of $H$. It is convenient to set 
\begin{equation*} \label{}
p_\ep:=\frac{x_\ep-y_\ep}{\ep}, \qquad q_\ep:= \eta D\phi(x_\ep) = \eta \left(1+|x_\ep|^2\right)^{-\frac12}x_\ep.
\end{equation*}
Observe that $|q_\ep| \leq \eta$ and, by~\eqref{e.xeyeplg}, $|p_\ep |\cdot|x_\ep-y_\ep| \to 0$ as $\ep \to 0$. 

Fix $r:=\frac12(1+s) < 1$ and observe by the convexity of~$H$ that
\begin{equation}\label{e.quack}
H \left( \frac rs p_\ep ,x_\ep \right) \leq r H\left(\frac{p_\ep + q_\ep}{s},x_\ep \right)  + (1-r) H\left( -\frac{rq_\ep}{s(1-r)} ,x_\ep \right).
\end{equation}
Using 
\begin{equation*} \label{}
\frac{r|q_\ep|}{s(1-r)} \leq \frac{\eta (s+1)}{ s(1-s)} \leq \frac{4\eta}{1-s},
\end{equation*}
we may estimate the second term on the right of~\eqref{e.quack}  by~\eqref{e.Hsubq}. We get
\begin{equation}\label{e.quackers}
(1-r) H\left( -\frac{rq_\ep}{s(1-r)} ,x_\ep \right) \leq \frac12(1-s) \Lambda_1\left(4^m \eta^{\m}(1-s)^{-\m}+1 \right)  \leq C(1-s) + C_s\eta.
\end{equation}
Using~\eqref{e.quack},~\eqref{e.quackers} as well as~\eqref{e.Hsubq} and~\eqref{e.HsubqLip} with $R=\max\{ |x_\ep| , |y_\ep| \} \leq C_{\eta,s}$, we find that
\begin{align}
\lefteqn{s H\left(\frac{p_\ep + q_\ep}{s},x_\ep \right) - H\big(p_\ep,y_\ep) } \qquad & \label{e.danger} \\
& = s H\left(\frac{p_\ep + q_\ep}{s},x_\ep \right) -H\big(p_\ep,x_\ep) + H\big(p_\ep,x_\ep) - H\big(p_\ep,y_\ep) \nonumber \\
& \ge \underbrace{ \left( \frac sr H \left( \frac rs p_\ep ,x_\ep \right) - H(p_\ep,x_\ep)  \right)}_{=:G_\ep}
 - \big(\Lambda_1 |p_\ep|^\m +C_{\eta,s}\big) |x_\ep-y_\ep| - C(1-s) - C_s\eta.\nonumber
\end{align}
The dangerous term in~\eqref{e.danger} is $|p_\ep|^\m |x_\ep-y_\ep|$. Indeed, the control we have over $|p_\ep| \cdot |x_\ep-y_\ep|$ is not of immediate use since $\m>1$ and $|p_\ep|$ is expected to be large. To compensate, we use the convexity and growth of $H$ to show that the good term $G_\ep$ dominates the dangerous term in the limit $\ep \to 0$. Precisely, we claim that 
\begin{equation} \label{e.Gclaim}
G_\ep \geq c_{\eta,s}(1-s)|p_\ep|^\m - C(1-s).
\end{equation}
The important point is that the constants in this estimate do not depend on $\ep$, so that the right side of~\eqref{e.danger} is nonnegative after we send $\ep \to 0$, $\eta \to 0$ and then $s\to 1$. To prove~\eqref{e.Gclaim}, we set 
\begin{equation*} \label{}
\tau := \frac rs = \frac{s+1}{2s}.
\end{equation*}
Note that $1<\tau \leq \frac32$. We also fix $0<\beta<\frac12$ to be chosen below and observe that, due to the convexity of $H$, we have
\begin{equation*} \label{}
H(p_\ep,x_\ep) \leq (1-\lambda) H(\tau p_\ep,x_\ep) +  \lambda H(\beta p_\ep,x_\ep),
\end{equation*}
where we have defined
\begin{equation*}\label{}
\lambda := \frac{\tau-1}{\tau-\beta}, \quad \mbox{so that} \quad 1-\lambda = \frac{1-\beta}{\tau-\beta}.
\end{equation*}
Using~\eqref{e.Hsubq}, we obtain, for small $\ep>0$,
\begin{align*} \label{}
G_\ep  = \frac{1}{\tau} H(\tau p_\ep,x_\ep) - H(p_\ep,x_\ep)  & \geq \left( \frac1\tau +\lambda -1 \right) H(\tau p_\ep,x_\ep) - \lambda H(\beta p_\ep,x_\ep) \\
& = \frac{\tau-1}{\tau-\beta} \left( \frac\beta\tau H(\tau p_\ep,x_\ep) -  H(\beta p_\ep,x_\ep)\right)\\
& \geq \frac{\tau-1}{\tau-\beta} \left(  \frac\beta\tau \left(a_R\tau^\m |p_\ep|^\m -M_R\right) -  \Lambda_1 (\beta^\m |p_\ep|^\m+1)\right)\\
& = \frac{\tau-1}{\tau-\beta} \left( \frac\beta\tau a_R\tau^\m |p_\ep|^\m  -\Lambda_1 \beta^\m |p_\ep|^\m - \frac{\beta M_R}{\tau} - \Lambda_1\right),
\end{align*}
where~$R:=|x_0| +1\leq C_{\eta,s}$. Using that $1<\tau \leq \frac32$ and $0< \beta \leq \frac12$, we deduce that
\begin{equation*} \label{}
G_\ep \geq (\tau-1)\big( \beta \left( c_{\eta,s} |p_\ep|^\m\right)  - \beta^\m \left( C|p_\ep|^\m \right) - C_{\eta,s} \beta  - C\big).
\end{equation*}
Observe that~$(\tau-1) = (1-s)/2s > (1-s)/2$. We now see that, by choosing~$\beta > 0$ sufficiently small, depending $\eta$ and $s$, we obtain~\eqref{e.Gclaim}.

Combining~\eqref{e.danger} and~\eqref{e.Gclaim} with the fact~\eqref{e.xeyeplg} implies that $|x_\ep-y_\ep|\to 0$, we obtain that, for $0<\ep<\ep(\eta,s)$, 
\begin{equation} \label{e.diffH}
s H\left(\frac{p_\ep + q_\ep}{s},x_\ep \right) - H\big(p_\ep,y_\ep) \geq - C(1-s) - C_s\eta,
\end{equation}
 where $C>0$ does not depend on~$s,\ep,\eta$ and $C_s>0$, may depend on~$s$ but not on~$\ep$ or~$\eta$.

{\it Step 3.} We estimate the difference between the terms involving $A$ on the left of~\eqref{suv-test2} and~\eqref{suv-test3}, respectively. This step of the argument is just like in~\cite{CIL}. We proceed by multiplying the second inequality in \eqref{suv-CI} by the nonnegative matrix
\begin{equation*}
\begin{pmatrix}   \sigma^t(x_\ep) \sigma(x_\ep) &  \sigma^t(x_\ep) \sigma(y_\ep)  \\  \sigma^t(y_\ep) \sigma(x_\ep)  &\sigma^t(y_\ep) \sigma(y_\ep) \end{pmatrix}
\end{equation*}
and then take the trace of both sides to get
\begin{multline}\label{suv-test5}
2\tr \left (A(x_\ep) X_\ep\right)-\tr \left (A(y_\ep) Y_\ep\right) \\ \leq \frac3\ep \tr \left( (\sigma(x_\ep)-\sigma(y_\ep))^t(\sigma(x_\ep)-\sigma(y_\ep))\right)  \leq \frac{3\Lambda_2^2} \ep|x_\ep-y_\ep|^2.
\end{multline}
Observe that 
\begin{equation*}
D^2\phi(x_\ep) = \left(1+|x_\ep|^2\right)^{-\frac32} \left( (1+|x_\ep|^2)I_d-x_\ep \otimes x_\ep \right)
\end{equation*}
and therefore $|D^2\phi(x_\ep)| \leq C$, and so from~\eqref{suv-test5} we obtain
\begin{equation} \label{e.diffA}
- \tr \left (A(x_\ep) \left(X_\ep+\eta D^2 \phi(x_\ep) \right )\right) +  \tr \left (A(y_\ep) Y_\ep\right ) \leq \frac{C}{\ep} |x_\ep-y_\ep|^2 + C\eta.
\end{equation}

{\it Step 4.} We complete the argument. Subtracting~\eqref{suv-test3} from~\eqref{suv-test2} and inserting~\eqref{e.diffH} and~\eqref{e.diffA}, we discover that, for all $0<\ep < \ep(\eta,s)$,
\begin{equation*} \label{}
0 \geq \delta (su(x_\ep) - v(y_\ep))  - C(1-s) - \frac{C}{\ep} |x_\ep-y_\ep|^2 - C_s\eta.
\end{equation*}
In view of~\eqref{e.absurd} and~\eqref{e.xeyeplg}, we may let $\ep \to 0$ to get:
\begin{equation*} \label{}
0 \geq \frac12 \delta\theta  - C(1-s) - C_s\eta.
\end{equation*}
Sending $\eta \to 0$ and then $s\to 1$ yields the desired contradiction.
\end{proof}

\begin{proof}[{\bf Proof of Theorem \ref{a.comp.strict}}]
This is actually an easy consequence of Theorem~\ref{a.comp.mono}. Since $U$ is bounded, we may choose $\delta>0$ sufficiently small such that
\begin{equation*}\label{}
\delta u - \tr\left( A(x) D^2u\right) + H(Du,x) \leq -\frac\theta2 \leq \delta v - \tr\left( A(x) D^2v\right) + H(Dv,x) \quad \mbox{in} \ U,
\end{equation*}
and obtain the result from Theorem~\ref{a.comp.mono}.
\end{proof}

\subsection{Comparison principles for time-dependent problems}

We next give comparison principles for the time-dependent initial-value problems. The proof is similar to that of Theorem~\ref{a.comp.strict}, so we only sketch it.

\begin{thm}\label{Ca.comp}
Let $U \subseteq \Rd$ be open and  $T>0$ be a given positive constant  and assume that  $u,-v \in \USC(\overline{U} \times [0,T))$ satisfy \begin{equation*}\label{}
 u_t - \tr\left( A(x) D^2u\right) + H(Du,x) \leq 0 \leq v_t - \tr\left( A(x) D^2v\right) + H(Dv,x) \quad \mbox{in} \ U  \times (0,T),
\end{equation*}
and 
\begin{equation*}
u(\cdot,0) \le v(\cdot,0) \quad \text{on} \ \overline U,  \quad
u \le v \quad \text{on} \ \partial U \times [0,T)
\end{equation*}
and
\begin{equation*}
\limsup_{x \in U, |x| \to \infty} \sup_{t\in [0,T)} \frac{u(x,t)}{1+|x|} \le 0 \le \liminf_{x\in U, |x| \to\infty} \inf_{t\in [0,T)} \frac{v(x,t)}{1+|x|}.
\end{equation*}
Then $u \le v$ in $U \times [0,T)$.
\end{thm}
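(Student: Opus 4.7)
The plan is to adapt the proof of Theorem~\ref{a.comp.mono} to the parabolic setting by doubling in both space and time, after a preliminary reduction to a strict subsolution. Argue by contradiction: assume $u(x_0,t_0) > v(x_0,t_0)$ at some point $(x_0,t_0) \in U \times (0,T)$. For $\mu > 0$ small, replace $u$ by $\tilde u(x,t) := u(x,t) - \mu/(T-t)$; then the strict inequality at $(x_0,t_0)$ persists, $\tilde u \leq v$ on the parabolic boundary, and $\tilde u$ becomes a strict subsolution,
\begin{equation*}
\tilde u_t - \tr\bigl(A(x) D^2 \tilde u\bigr) + H(D\tilde u,x) \leq -\mu/T^2 =: -\theta \quad \text{in } U \times (0,T),
\end{equation*}
with the crucial extra feature that $\tilde u(x,t) \to -\infty$ as $t \to T^-$.

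Next, introduce the doubled auxiliary function
\begin{equation*}
\Phi(x,y,t,\tau) := s\tilde u(x,t) - v(y,\tau) - \frac{|x-y|^2}{2\ep} - \frac{|t-\tau|^2}{2\ep} - \eta\phi(x),
\end{equation*}
with $s,\eta,\ep$ as in the proof of Theorem~\ref{a.comp.mono} and $\phi(x)=(1+|x|^2)^{1/2}$. The growth at infinity, the boundary and initial data, and the terminal blow-up of $\tilde u$ together produce a maximizer $(x_\ep, y_\ep, t_\ep, \tau_\ep)$ lying in $U \times U \times (0,T) \times (0,T)$ for all sufficiently small $\ep$, with the standard convergence $|x_\ep - y_\ep|^2/\ep,\,|t_\ep - \tau_\ep|^2/\ep \to 0$. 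Applying the parabolic Crandall--Ishii lemma and multiplying the subsolution inequality by $s$, the two inequalities produce time derivatives both equal to $(t_\ep-\tau_\ep)/\ep$, which cancel upon subtraction. What remains is exactly the spatial inequality at the bottom of Step~1 of the proof of Theorem~\ref{a.comp.mono} --- augmented by the additional good term $-s\theta$ --- and Steps~2 and~3 of that proof (the convexity estimate~\eqref{e.diffH} for the Hamiltonian difference and the trace estimate~\eqref{e.diffA}) go through verbatim. Letting $\ep \to 0$, $\eta \to 0$, and then $s \to 1^-$ yields $0 \leq -\theta$, a contradiction.

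The only real subtlety, and hence the main obstacle, is the localization of the quadruple maximizer: the spatial variables are confined to $U$ using the boundary inequality $u \leq v$ on $\partial U \times [0,T)$ exactly as in Theorem~\ref{a.comp.mono}; confinement $t_\ep, \tau_\ep > 0$ is ruled out by the initial condition $u(\cdot,0) \leq v(\cdot,0)$ combined with the strict positivity of $\sup \Phi$ once $\eta$ is small enough and $s$ close enough to $1$; and $t_\ep, \tau_\ep < T$ is automatic from the terminal penalty $-\mu/(T-t)$. Once this localization is in hand, no new estimates beyond those appearing in the proof of Theorem~\ref{a.comp.mono} are required.
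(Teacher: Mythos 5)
Your overall strategy matches the paper's in the essential points: the preliminary reduction $u \mapsto u - \mu/(T-t)$ to obtain a strict subsolution with terminal blow-up, the penalized auxiliary function with the $\eta\phi$ localizer, and the reuse of Steps 2--3 from the proof of Theorem~\ref{a.comp.mono} once the $H$- and $A$-differences are isolated. The key divergence is that you double in time (separate variables $t$ and $\tau$ with a $|t-\tau|^2/2\ep$ penalty), whereas the paper keeps a single time variable $t$ in $\Phi(x,y,t)$ and invokes the parabolic Crandall--Ishii lemma (Theorem~8.3 of~\cite{CIL}), which by design produces a common scalar $\tau\in\R$ for the time-derivative slots of both inequalities, so that the first-order-in-time terms cancel without any time doubling at all. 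Your route is known to work, but beware of a terminological and technical slip: once you have doubled in time, the lemma you apply is the \emph{elliptic} Crandall--Ishii lemma in the $(d+1)$-dimensional space-time variables (or a two-parameter penalization), not the parabolic one; the resulting matrix inequalities are $(d+1)\times(d+1)$, and one must justify extracting the purely spatial $d\times d$ blocks before multiplying by the $\sigma$-matrix as in Step~3. This extra bookkeeping is exactly what Theorem~8.3 of~\cite{CIL} was invented to avoid, which is why the paper does not double in time. Your handling of the localization (boundary inequality for the spatial confinement, initial data for $t_\ep,\tau_\ep>0$, terminal penalty for $t_\ep,\tau_\ep<T$) is correct and matches the paper's reasoning.
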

\begin{proof}
Observe first that $\bar u:=u-\alpha(T-t)^{-1}$ for $\alpha>0$ is a subsolution of
\begin{equation}\label{C-comp-ep}
\bar u_t - \tr\left( A(x) D^2\bar u\right) + H(D\bar u,x) \leq -\frac{\alpha}{(T-t)^2}\le -\frac{\alpha}{T^2} \quad \text{in} \ U \times (0,T).
\end{equation}
In order to prove $u \le v$, it is enough to prove that $\bar u \le v$ for any $\alpha>0$. We can therefore prove the comparison principle under the additional assumptions that
\begin{equation}\label{C-comp-mod}
\begin{cases}
u_t - \tr\left( A(x) D^2 u\right) + H(D u,x) \leq - \alpha T^{-2}=:-c \quad \text{in} \ U \times (0,T),\\
\lim_{t \to T} u(x,t)=-\infty \quad \text{uniformly on} \ \overline U.
\end{cases}
\end{equation}
We follow the same strategy as in the proof of Theorem~\ref{a.comp.mono}. Suppose by contradiction that $u>v$ at some point $(x_1,t_1) \in U \times (0,T)$. Set $\theta:=u(x_1,t_1)-v(x_1,t_1)>0$. Fix $\frac12<s<1$ and $\ep,\eta>0$ and define the auxiliary function $\Phi:\overline U \times \overline U \times [0,T) \to \R$ by
\begin{equation*}
\Phi(x,y,t):= su(x,t) - v(y,t) -\frac{1}{2\ep}|x-y|^2 - \eta\underbrace{ \left( 1+|x|^2\right)^{\frac12}}_{=:\phi(x)}.
\end{equation*}
Then,  if $\eta,\ep> 0$ are sufficiently small and $s<1$ is sufficiently close to $1$, there exist $(x_0,t_0)\in U \times (0,T)$ with $|x_0| \leq C_\eta$ and $(x_\ep,y_\ep,t_\ep) \in U \times U \times [0,T)$ such that 
\begin{equation*} \label{}
\Phi(x_\ep,y_\ep,t_\ep) = \sup_{ U\times U \times (0,T)} \Phi < +\infty,
\end{equation*}
\begin{equation} \label{C.absurd}
\sup_{x\in U  \times [0,T)} \left( su(x,t) - v(x,t) - \eta \left( 1+|x|^2\right)^{\frac12}  \right)  = su(x_0,t_0) - v(x_0,t_0) - \eta \left( 1+|x_0|^2\right)^{\frac12}  \geq \frac12\theta
\end{equation}
and, up to a subsequence, 
\begin{equation} \label{C.xeyeplg}
\lim_{\ep \to 0} (x_\ep,y_\ep,t_\ep) = (x_0,x_0,t_0) \qquad \mbox{and} \qquad \lim_{\ep\to 0}\frac{|x_\ep-y_\ep |^2}{\ep}  = 0.
\end{equation}
Note that $\Phi(x_\ep,y_\ep,t_\ep) \ge \frac12 \theta$, which together with \eqref{C.xeyeplg} implies that $t_\ep>0$ for $\ep>0$ sufficiently small.

By~\cite[Theorem 8.3]{CIL}, there exist $\tau\in \R$ and symmetric matrices $X_\ep, Y_\ep \in \Sy$ satisfying
\begin{equation}\label{time-CI}
-\frac{3}{\ep}\begin{pmatrix} I_d &0\\ 0&I_d \end{pmatrix} \le \begin{pmatrix} X_\ep &0\\ 0&-Y_\ep \end{pmatrix} \leq \frac{3}{\ep}\begin{pmatrix} I_d &-I_d\\ -I_d&I_d \end{pmatrix},
\end{equation}
as well as
\begin{equation}\label{time-test1}
 s \tau - \tr \left (A(x_\ep) \left(X_\ep+D^2 \phi(x_\ep) \right )\right) + sH\left( \frac{x_\ep-y_\ep}{\ep s} + \frac{\eta}{s} D\phi(x_\ep) ,\,x_\ep \right) \leq -sc.
\end{equation}
and
\begin{equation}\label{time-test2}
\tau - \tr \left (A(y_\ep) Y_\ep\right ) + H\left( \frac{x_\ep-y_\ep}{\ep},y_\ep \right) \geq 0.
\end{equation}
Follow the last step in the proof of Theorem~\ref{a.comp.mono}, we obtain from~\eqref{time-test1} and~\eqref{time-test2} the desired contradiction that $0\le -c$.
\end{proof}

\subsection{Two useful consequences of convexity and comparison}

In this subsection we prove two simple lemmas, used repeatedly in the rest of the paper, involving convex combinations of subsolutions and supersolutions of the equations
\begin{equation} \label{e.convcomb}
-\tr(A(x)D^2u) + H(Du,x) = \mu  \quad \mbox{and}\quad -\tr(A(x)D^2v) + H(Dv,x) = \nu.
\end{equation}

\begin{lem}
\label{l.convex}
Suppose that $U \subseteq\Rd$ is open, $\mu,\nu\in \R$ and $u,v\in\USC(U)$ such that $u$ and $v$ are subsolutions of the equations in~\eqref{e.convcomb} respectively. Then for each $0 \leq \lambda \leq 1$, the function $w:=\lambda u + (1-\lambda)v$ is a subsolution of
\begin{equation} \label{e.convex1}
-\tr(A(x)D^2w) + H(Dw,x) = \lambda \mu + (1-\lambda) \nu \quad \mbox{in} \ U.
\end{equation}
\end{lem}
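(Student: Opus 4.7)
The plan is to prove the lemma via sup-convolution regularization, which reduces the assertion to a pointwise almost-everywhere inequality where the convexity of $H$ in $p$ and the linearity of $M \mapsto \tr(A(x)M)$ in $M$ can be combined directly; the conclusion is then recovered by a standard stability argument. The reasoning is entirely local, so I fix a closed ball $\overline{B_r(x_0)} \Subset U$ at the outset and work there.

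First, for small $\ep>0$, I introduce the sup-convolutions
\[
u^\ep(x) := \sup_{y \in \overline{B_r(x_0)}} \Bigl\{ u(y) - \tfrac{1}{2\ep}|x-y|^2 \Bigr\}, \qquad v^\ep(x) := \sup_{y \in \overline{B_r(x_0)}} \Bigl\{ v(y) - \tfrac{1}{2\ep}|x-y|^2 \Bigr\},
\]
which are semiconvex in $B_{r/2}(x_0)$ and decrease pointwise to $u$ and $v$ respectively as $\ep \to 0$. Since the argmax defining $u^\ep(x)$ lies within $O(\sqrt{\ep})$ of $x$, a classical computation using the Lipschitz hypotheses \eqref{e.siglip} and \eqref{e.HsubqLip} shows that $u^\ep$ is a viscosity subsolution of
\[
-\tr(A(x) D^2 u^\ep) + H(Du^\ep, x) \leq \mu + \rho_\ep \quad \mbox{in } B_{r/2}(x_0),
\]
for some error $\rho_\ep \to 0$, and analogously for $v^\ep$ with $\nu$ replacing $\mu$. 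By Alexandrov's theorem, each of $u^\ep$ and $v^\ep$ is twice differentiable in the pointwise classical sense at almost every point of $B_{r/2}(x_0)$, and at such points the viscosity inequalities reduce to the corresponding pointwise inequalities.

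Setting $w^\ep := \lambda u^\ep + (1-\lambda) v^\ep$ and taking the corresponding convex combination of the two a.e.\ pointwise inequalities, the linearity of $M \mapsto \tr(A(x) M)$ together with the convexity of $p \mapsto H(p, x)$ yields
\[
-\tr\!\bigl(A(x) D^2 w^\ep(x)\bigr) + H\!\bigl(Dw^\ep(x), x\bigr) \leq \lambda \mu + (1-\lambda) \nu + \rho_\ep \qquad \mbox{a.e.\ in } B_{r/2}(x_0).
\]
Since $w^\ep$ is itself semiconvex, this a.e.\ inequality upgrades to a viscosity subsolution inequality on $B_{r/2}(x_0)$ by the standard equivalence (for semiconvex functions) between the a.e.\ and viscosity senses, proved via Jensen's lemma. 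Finally, $w^\ep \downarrow w$ as $\ep \to 0$, and the classical stability of viscosity subsolutions under monotone decreasing limits delivers the result on $B_{r/2}(x_0)$; since $x_0 \in U$ was arbitrary, the conclusion on all of $U$ follows. The only step with any real content is the verification that sup-convolution preserves the subsolution property with a quantifiable error, but this is classical and uses precisely the Lipschitz-in-$x$ hypotheses \eqref{e.siglip} and \eqref{e.HsubqLip} that are already built into the paper's standing assumptions.
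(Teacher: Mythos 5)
There is a genuine gap in the sup-convolution step: the claim that $u^\ep$ is a subsolution of the equation with a vanishing error $\rho_\ep$ is not correct under the paper's hypotheses, and in fact the error diverges. Unwinding the magic property of sup-convolutions, if $\psi$ touches $u^\ep$ from above at $x$ with argmax $y_x$ in the defining supremum, one obtains $-\tr\left(A(y_x)D^2\psi(x)\right) + H(D\psi(x),y_x) \leq \mu$, so the error you must absorb is
$\tr\left((A(y_x)-A(x))D^2\psi(x)\right) + H(D\psi(x),x) - H(D\psi(x),y_x)$.
Since $u,v\in\USC(U)$ are only locally bounded, you merely have $|x-y_x|=O(\sqrt{\ep})$, and hence $|D\psi(x)| = |x-y_x|/\ep = O(\ep^{-1/2})$. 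Plugging into~\eqref{e.HsubqLip}, the $H$-error is of order $\left(\Lambda_1\ep^{-m/2}+M_R\right)\sqrt{\ep} \sim \ep^{(1-m)/2}$, which \emph{diverges} for every $m>1$. The $A$-error fares no better: $|A(y_x)-A(x)|=O(\sqrt{\ep})$, but semiconvexity of $u^\ep$ only constrains $D^2\psi(x) \geq -\ep^{-1}I$, so the trace term can be as large as $O(\ep^{-1/2})$. This is exactly the obstruction noted in the paper's introduction: the modulus of continuity of $H(p,\cdot)$ scales like $|p|^m$ and the diffusion varies in $x$, placing the problem outside classical one-sided hypotheses, and single-variable regularization cannot tame the $x$-dependence. (Even if you first proved $u$, $v$ Lipschitz so that $|D\psi|=O(1)$, the $A$-error would still be $O(1)$ and not vanish.) Controlling that error is precisely what the doubling-variable Crandall--Ishii machinery achieves, via the cancellation $\tr\left(A(x)X - A(y)Y\right) \leq C|x-y|^2/\ep$ --- a bound that genuinely requires \emph{two} matrices, not a single semiconvexity lower bound.

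The paper sidesteps regularization entirely. Arguing by contradiction, it assumes $w-\psi$ has a strict local maximum at $x_0$ but that~\eqref{e.convex1} fails on some ball $B_r(x_0)$, and shows by a direct test-function computation (using only the subsolution property of $u$ and the convexity of $H$) that $\tilde v := (1-\lambda)^{-1}(\psi-\lambda u)$ is a \emph{strict supersolution} on $B_r(x_0)$. The strict comparison principle of Theorem~\ref{a.comp.strict} --- which already encapsulates the hard Crandall--Ishii work for this class of equations --- then forces $\tilde v - v$ to attain its minimum on $\partial B_r(x_0)$, contradicting the strict maximum of $w - \psi = (1-\lambda)(v-\tilde v)$ at $x_0$. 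If you want a proof along the lines you propose, you would have to establish Lipschitz regularity of $u$ and $v$ in advance and then replace the two separate sup-convolutions with a genuine doubling argument; at that point you would essentially be reproving the comparison principle, which the paper prefers to invoke as a black box.
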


\begin{lem}
\label{l.convex2}
Suppose that $U \subseteq\Rd$ is open, $\mu,\nu\in \R$ and $u,-v\in\USC(U)$ such that $u$ is a subsolution and $v$ is a supersolution of the equations in~\eqref{e.convcomb} respectively. Then for every $\lambda \geq 0$, the function $w:=(1+\lambda) v -\lambda u$ is a supersolution of
\begin{equation} \label{e.convex2}
-\tr(A(x)D^2w) + H(Dw,x) = (1+\lambda) \nu -\lambda \mu \quad \mbox{in} \ U.
\end{equation}
\end{lem}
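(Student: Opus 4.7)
The approach will be doubling of variables combined with convexity of $H$, following the strategy of Theorem~\ref{a.comp.mono}. The formal idea is that, since $v = \tfrac{1}{1+\lambda}w + \tfrac{\lambda}{1+\lambda}u$, a $C^2$ test function $\varphi$ touching $w$ from below at $x_0$ produces the (non-smooth) ``test function'' $\psi:=(\varphi + \lambda u)/(1+\lambda)$ touching $v$ from below at $x_0$; applying the supersolution inequality for $v$, using convexity of $H$ at $D\psi$ together with linearity of $\tr(A\cdot)$ at $D^2\psi$, combined with the subsolution inequality for $u$, would formally yield the supersolution inequality for $w$ with right-hand side $(1+\lambda)\nu - \lambda\mu$. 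Since $u$ need not be smooth, the rigorous argument proceeds by doubling.

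First I would observe that $w\in\LSC(U)$ (since both $(1+\lambda)v$ and $-\lambda u$ are lsc under $\lambda\geq 0$), so that $w_*=w$. Let $\varphi\in C^2$ and suppose $w-\varphi$ attains a strict local minimum at some $x_0\in U$ with $w(x_0)=\varphi(x_0)$. Introduce
$$\Phi_\ep(x,y):=\lambda u(x) + \varphi(y) - (1+\lambda)v(y) - \frac{|x-y|^2}{2\ep}$$
on $\overline{B_r(x_0)}\times\overline{B_r(x_0)}$ and maximize. Standard arguments give a maximizer $(x_\ep,y_\ep)\to(x_0,x_0)$ with $|x_\ep-y_\ep|^2/\ep\to 0$. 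The Crandall-Ishii Lemma yields $X_\ep,Y_\ep\in\Sy$ with the usual matrix inequality, together with the inclusions $(p_\ep/\lambda, X_\ep/\lambda)\in\overline J^{2,+}u(x_\ep)$ and $\bigl((p_\ep+D\varphi(y_\ep))/(1+\lambda),(Y_\ep+D^2\varphi(y_\ep))/(1+\lambda)\bigr)\in\overline J^{2,-}v(y_\ep)$, where $p_\ep:=(x_\ep-y_\ep)/\ep$.

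Multiplying the subsolution inequality for $u$ by $\lambda$, the supersolution inequality for $v$ by $1+\lambda$, subtracting, and using \eqref{e.Hconvex} in the form
$$(1+\lambda)H\!\left(\frac{p_\ep+D\varphi(y_\ep)}{1+\lambda},y_\ep\right)\leq \lambda H(p_\ep/\lambda,y_\ep)+H(D\varphi(y_\ep),y_\ep),$$
together with the standard trace estimate from the matrix inequality (multiply by the nonnegative matrix built from $\sigma(x_\ep)$ and $\sigma(y_\ep)$ and take trace, as in Step~3 of Theorem~\ref{a.comp.mono}), I arrive at
$$-\tr(A(y_\ep)D^2\varphi(y_\ep))+H(D\varphi(y_\ep),y_\ep)\geq (1+\lambda)\nu-\lambda\mu - \lambda\bigl[H(p_\ep/\lambda,y_\ep)-H(p_\ep/\lambda,x_\ep)\bigr] + o(1).$$

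The main obstacle is the ``dangerous term'' $\lambda[H(p_\ep/\lambda,y_\ep)-H(p_\ep/\lambda,x_\ep)]$, which by \eqref{e.HsubqLip} is bounded by a constant multiple of $|p_\ep|^m|x_\ep-y_\ep|$; since $|p_\ep|$ may blow up as $\ep\to 0$, this need not vanish on its own. I would handle it exactly as in Step~2 of Theorem~\ref{a.comp.mono}: by inserting an additional parameter $s\in(1/2,1)$ into $\Phi_\ep$ (for example, replacing $\lambda u$ by $s\lambda u$), the convexity and coercivity \eqref{e.Hsubq} of $H$ generate a compensating ``good term'' of order $(1-s)|p_\ep|^m$ from the subsolution inequality, which dominates the dangerous term. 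Sending $\ep\to 0$ and then $s\to 1^-$ concludes the proof.
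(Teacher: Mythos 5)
Your proposal is correct in outline but takes a genuinely different and substantially heavier route than the paper. The paper observes that, by the identity $v=\tfrac{1}{1+\lambda}w+\tfrac{\lambda}{1+\lambda}u$, a smooth $\phi$ touching $w$ from below at $x_0$ is equivalent to $\phi_\lambda:=\tfrac{1}{1+\lambda}\phi+\tfrac{\lambda}{1+\lambda}u$ touching $v$ from below there; if the desired inequality failed, then by Lemma~\ref{l.convex} the function $\phi_\lambda$ would be a \emph{strict} subsolution of the second equation near $x_0$, and Theorem~\ref{a.comp.strict} then forces $v-\phi_\lambda$ to have its minimum on the boundary, a contradiction. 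This is precisely the ``formal idea'' you state in your opening paragraph --- but you then discard it and instead redo the full doubling-of-variables machinery (penalization, Crandall--Ishii, trace estimate, and the convexity/coercivity argument of Step~2 of Theorem~\ref{a.comp.mono}) from scratch. The paper's route is essentially two lines given the earlier results and treats the comparison principle as a black box; yours is self-contained but re-derives it.

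On the substance of your sketch: the key assertion, that replacing $\lambda u$ by $s\lambda u$ generates a compensating good term, is true but deserves more than a wave. Concretely, the Crandall--Ishii step now produces $(p_\ep/(s\lambda),X_\ep/(s\lambda))\in\overline J^{2,+}u(x_\ep)$, so the subsolution inequality, after multiplying by $s\lambda$, involves $s\lambda\,H(p_\ep/(s\lambda),x_\ep)=\lambda\cdot\frac{1}{\tau}H(\tau\cdot p_\ep/\lambda,x_\ep)$ with $\tau:=1/s>1$; the difference $\frac{1}{\tau}H(\tau q,x)-H(q,x)$ is exactly the quantity bounded below by $c(\tau-1)|q|^m-C(\tau-1)$ in~\eqref{e.Gclaim}. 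Since $\tau-1\sim 1-s$, this gives a good term of order $(1-s)|p_\ep|^m$ dominating the dangerous term $\sim|p_\ep|^m|x_\ep-y_\ep|$ once $\ep$ is small. You should also record that $\lambda=0$ is trivial (then $w=v$), that the maximizer of $\Phi_\ep$ over $\overline{B_r(x_0)}^2$ stays interior for $s$ close to $1$ and $\ep$ small (a compactness/semicontinuity argument analogous to the remark after~\eqref{e.absurd}), and make explicit the convexity split $\tfrac{p_\ep+D\varphi(y_\ep)}{1+\lambda}=\tfrac{\lambda}{1+\lambda}\cdot\tfrac{p_\ep}{\lambda}+\tfrac{1}{1+\lambda}D\varphi(y_\ep)$ applied at $y_\ep$. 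None of these are gaps in principle, but your sketch leaves the heaviest part of the argument to a citation of a step proved in a different setting, when the paper's own proof sidesteps all of it.
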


The statements of both lemmas are easy to formally derive from the convexity of $H$. If one of $u$ or $v$ is $C^2$, then the formal derivation is actually rigorous because it can be repeated by using a test function in place of the other, nonsmooth function. All of the interest is therefore in the case when neither of $u$ or $v$ is smooth.

\begin{proof}[{\bf Proof of Lemma~\ref{l.convex}}]
We assume $0<\lambda<1$, otherwise there is nothing to prove. Assume that 
\begin{equation}\label{l.co-1}
w-\psi \quad \text{has a strict local maximum at} \quad x_0\in U
\end{equation}
 for some smooth test function $\psi$, and we need to show that 
\begin{equation}\label{l.co-2}
-\tr\left(A(x_0)D^2 \psi(x_0)\right)+H(D\psi(x_0),x_0)\le \lambda \mu+(1-\lambda)\nu.
\end{equation}
Suppose by contrary that there exists $r>0$ such that
\begin{equation}\label{l.co-3}
-\tr\left(A(x)D^2 \psi(x))+H(D\psi(x),x\right)> \lambda \mu+(1-\lambda)\nu \quad \text{for every}\ x \in B_r(x_0).
\end{equation}
We claim that  $\tilde v := (1-\lambda)^{-1}(\psi-\lambda u)$ is a solution of
\begin{equation}\label{l.co-4}
-\tr\left (A(x)D^2 \tilde v \right)+H(D\tilde v,x)> \nu \quad \text{in}\ B_r(x_0).
\end{equation}
To check~\eqref{l.co-4} in the viscosity sense, we take a smooth test function $\varphi$ such that $\tilde v -\varphi$ has a strict local minimum at $x_1 \in B(x_0,r)$. Then $u-\lambda^{-1}\left(\psi-(1-\lambda) \varphi \right)$ has a strict local maximum at $x_1$. Using the differential inequality for $u$, we find
\begin{multline}\label{l.co-5}
-\tr\left ( \lambda^{-1} A(x_1) \left(D^2\psi(x_1)-(1-\lambda)D^2 \varphi(x_1)\right)\right ) \\ +H\left ( \lambda^{-1} \left(D\psi(x_1)-(1-\lambda)D \varphi(x_1)\right),x_1\right ) \le \mu.
\end{multline}
Combining \eqref{l.co-3},~\eqref{l.co-5} and the convexity of $H$ yields
\begin{equation}
-\tr\left ( A(x_1)D^2\varphi(x_1)\right )+H(D\varphi(x_1),x_1) >\nu.
\end{equation}
This confirms \eqref{l.co-4}. We apply Proposition~\ref{a.comp.strict} to conclude that 
\begin{equation*}
\min_{\overline B_r(x_0)} (\tilde v -v ) = \min_{\partial B_r(x_0)} (\tilde v - v),
\end{equation*}
which contradicts  \eqref{l.co-1}. The proof is complete.
\end{proof}

To formally derive Lemma~\ref{l.convex2}, write $v$ in terms of $u$ and $w$:
\begin{equation*}\label{}
v = \frac{1}{1+\lambda} w + \frac{\lambda}{1+\lambda} u = \frac{1}{1+\lambda} w + \left( 1 - \frac{1}{1+\lambda} \right) u.
\end{equation*}
Observe that, since~$u$ is a subsolution of the first equation of~\eqref{e.convcomb}, Lemma~\ref{l.convex} asserts that~$v$ is a strict subsolution of the second equation of~\eqref{e.convcomb} anywhere in~$U$ that~$w$ is a strict subsolution of~\eqref{e.convex2}. But by hypothesis~$v$ is a supersolution in~$U$, thus it cannot be a strict subsolution anywhere in~$U$, and so we conclude that~$w$ cannot be a strict subsolution of~\eqref{e.sc} anywhere in~$U$. Thus~$w$ must be a supersolution of~\eqref{e.sc} in~$U$. We now make this argument rigorous.

\begin{proof}[{\bf Proof of Lemma~\ref{l.convex2}}]
Select a smooth test function $\phi$ and a point $x_0\in U$ such that 
\begin{equation}\label{e.sc.subq1.ctr}
w - \phi \quad \mbox{has a strict local minimum at} \ x_0.
\end{equation}
We must show that
\begin{equation*}\label{}
-\tr\left( A(x_0) D^2\phi(x_0) \right) + H(D\phi(x_0),x_0) \geq (1+\lambda) \nu - \lambda \mu.
\end{equation*}
Arguing by contradiction, we assume on the contrary that 
\begin{equation*}\label{}
\theta:= (1+\lambda) \nu - \lambda \mu +\tr\left( A(x_0) D^2\phi(x_0) \right) - H(D\phi(x_0),x_0) > 0.
\end{equation*}
Since $\phi$ is smooth, we may select $r>0$ sufficiently small that $\overline B_r(x_0) \subseteq U$ and
\begin{equation}\label{e.sc.subq1.psi}
-\tr\left( A(x) D^2\phi \right) + H(D\phi,x) \leq  (1+\lambda) \nu - \lambda \mu- \frac12 \theta \quad \mbox{in} \ B_r(x_0).
\end{equation}
Note that~\eqref{e.sc.subq1.ctr} is equivalent to
\begin{equation}\label{e.sc.subq1.ctr2}
v -  \underbrace{\left(  \frac{1}{1+\lambda}\phi + \frac{\lambda}{1+\lambda} u \right) }_{=:\phi_\lambda}  \quad \mbox{has a strict local minimum at} \ x_0.
\end{equation}
According to Lemma~\ref{l.convex} and~\eqref{e.sc.subq1.psi}, the function $\phi_\lambda$ satisfies
\begin{equation}\label{sc.subq1.psi2}
-\tr\left( A(x) D^2\phi_\lambda \right) + H(D\phi_\lambda,x) \leq \frac{1}{1+\lambda}\left((1+\lambda) \nu - \lambda \mu- \frac12 \theta \right) + \frac{\lambda}{1+\lambda} \mu < \nu \quad \mbox{in} \ B_r(x_0).
\end{equation}
We now have the desired contradiction, since the fact that $v$ is a supersolution of the second equation of~\eqref{e.convcomb} is in violation of~\eqref{e.sc.subq1.ctr2},~\eqref{sc.subq1.psi2} and Proposition~\ref{a.comp.strict}. 
\end{proof}

\begin{remark}
Left to the reader are analogues of Lemmas~\ref{l.convex} and~\ref{l.convex2} with nearly identical proofs: for time-dependent equations, with nonconstant right-hand sides, and so forth.
\end{remark}

\section{Interior H\"older and Lipschitz estimates}
\label{S.A}

\subsection{Interior Lipschitz estimates: The stationary case.} \textbf{}
The purpose of this subsection is to prove the following explicit Lipschitz bound for \emph{continuous} viscosity solutions. We do not use the assumption that~$H$ is convex here, so the results hold also for nonconvex~$H$.

\begin{thm}
\label{t.LIP}
Fix $\delta \geq 0$. Then any solution $u\in C(B_2)$ of
\begin{equation} \label{e.uLipPde}
\delta u-\tr\left( A(x) D^2u \right) + H(Du,x) = 0 \quad \mbox{in} \ B_2
\end{equation}
satisfies, for every $x,y\in B_1$,
\begin{equation} \label{}
\left| u(x) - u(y) \right| \leq  K |x-y|,
\end{equation}
where $K>0$ is given by
\begin{equation} \label{e.Lmu}
K :=  C \left\{ \left( \frac{(1+\Lambda_1)^{1/2}\Lambda_2}{a_2} \right)^{2/(m-1)}+\left( \frac{M_2 + \delta \| u\|_{L^\infty(B_{3/2})} }{a_2}\right)^{1/m}  \right\}
\end{equation}
and $C >0$ depends only on $\d$ and $m$. In particular, $\osc_{B_1} u \leq 2K$.
\end{thm}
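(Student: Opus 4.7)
The plan is a doubling-of-variables argument exploiting the superlinear coercivity of $H$ to dominate the diffusion; convexity of $H$ plays no role. Fix $z_0 \in \overline{B_1}$ and introduce on $\overline{B_{3/2}}\times \overline{B_{3/2}}$ the auxiliary function
\begin{equation*}
\Phi(x,y) := u(x) - u(y) - L\,\chi(|x-y|) - \Gamma\bigl(|x-z_0|^2 + |y-z_0|^2\bigr),
\end{equation*}
where $L>0$ is the candidate Lipschitz constant (to be taken a constant multiple of $K$), $\Gamma>0$ is a localization parameter chosen large enough, depending on $L$ and $\|u\|_{L^\infty(B_{3/2})}$, so that the supremum of $\Phi$ is attained at an interior pair $(\bar x,\bar y)$ close to $z_0$, and $\chi:[0,\infty)\to[0,\infty)$ is a smooth concave nondecreasing profile with $\chi(0)=0$, $\chi'(0)=1$, to be specified. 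Arguing by contradiction, assume $\sup\Phi > 0$; then $\bar x\neq \bar y$ and the Crandall-Ishii Lemma applies.

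We obtain $X,Y \in \mathcal{S}^d$ and the gradient vector $p := L\chi'(|\bar x-\bar y|)\,(\bar x-\bar y)/|\bar x-\bar y|$, so $|p|\approx L$. Subtracting the sub- and supersolution inequalities yields
\begin{equation*}
H\bigl(p + O(\Gamma),\bar x\bigr) - H\bigl(p + O(\Gamma),\bar y\bigr) \;\leq\; \tr(A(\bar x)X) - \tr(A(\bar y)Y) + \delta\|u\|_{L^\infty(B_{3/2})}.
\end{equation*}
The left-hand side is bounded below via \eqref{e.Hsubq}, \eqref{e.HsubqDp} and \eqref{e.HsubqLip} by $a_2 L^m$ minus errors controlled by $|\bar x-\bar y|$ and by the localization scale. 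The right-hand side is controlled by repeating the matrix calculation from Step~3 of the proof of Theorem~\ref{a.comp.mono} --- multiplying the Crandall-Ishii matrix inequality by the nonnegative block matrix built from $\sigma(\bar x)$ and $\sigma(\bar y)$, taking traces, and using \eqref{e.siglip} --- producing a bound in terms of $L$, $\chi''(|\bar x-\bar y|)$, $|\bar x-\bar y|$ and $\Lambda_2$. With $\chi$ tuned appropriately, the two sides combine into
\begin{equation*}
a_2 L^m \;\leq\; C(1+\Lambda_1)^{1/2}\Lambda_2\,L^{(m+1)/2} + C\bigl(M_2 + \delta\|u\|_{L^\infty(B_{3/2})}\bigr),
\end{equation*}
which fails as soon as $L$ exceeds a constant multiple of $K$; the two summands on the right match exactly the two terms of $K$ in \eqref{e.Lmu}. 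This contradiction yields $\sup\Phi\leq 0$, hence the Lipschitz estimate in a neighborhood of $z_0$, and since $z_0\in\overline{B_1}$ is arbitrary the estimate extends throughout $B_1$. The oscillation bound $\osc_{B_1} u \leq 2K$ is immediate.

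The main technical obstacle is the matrix estimate: producing the exponent $L^{(m+1)/2}$ that balances $a_2 L^m$ and reproduces the $2/(m-1)$ exponent in \eqref{e.Lmu}. A purely linear penalty $\chi(r)=r$ has a Hessian that is singular on the diagonal and, combined with \eqref{e.siglip}, gives only $L\Lambda_2^2|\bar x-\bar y|$, which without sharp control of $|\bar x-\bar y|$ is insufficient; a purely quadratic penalty $\chi(r)=r^2/2$ loses the Lipschitz-scale estimate on $|p|$. The resolution is a profile $\chi$ that interpolates between these regimes --- effectively localizing $|\bar x-\bar y|$ at the scale $L^{-(m-1)/2}$ --- so that the diffusion contribution scales like $L^{(m+1)/2}$ and is absorbed by the superlinear coercivity term $a_2 L^m$. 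This is the step that makes the Lipschitz constant explicit, and is precisely the novelty the authors emphasize.
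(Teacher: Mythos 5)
There is a genuine gap at the heart of your argument: the coercive term $a_2 L^m$ never actually appears. After subtracting the sub- and supersolution inequalities, the Hamiltonian contribution is $H(q,\bar x)-H(q,\bar y)$ with the \emph{same} gradient argument $q\approx L\chi'\hat e$ at both points. Hypothesis \eqref{e.HsubqLip} controls this difference in absolute value by $(\Lambda_1|q|^m+M_2)|\bar x-\bar y|$, which tends to $0$ as $\bar x\to\bar y$; it does not and cannot give a lower bound of the form $a_2 L^m - \mathrm{errors}$. The coercivity hypothesis \eqref{e.Hsubq} bounds $H(q,\bar x)$ itself from below, but it is of no use when you only have access to the difference at two nearby $x$'s. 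Consequently, the inequality you arrive at is schematically $-(\Lambda_1 L^m + M_2)|\bar x-\bar y| - O(\Gamma\text{-errors}) \le L\Lambda_2^2|\bar x-\bar y|$, which is trivially true for every $L$ and yields no contradiction. Tuning the profile $\chi$ to interpolate between linear and quadratic cannot repair this, because the failure is not in the matrix estimate but in the absence of any coercive term against which to balance it; and indeed, the claimed $L^{(m+1)/2}$ scaling of the diffusion contribution is not justified by the calculation you outline (with the linear penalty and $|\bar x-\bar y|\sim L^{-(m-1)/2}$ one gets $L\Lambda_2^2|\bar x-\bar y|\sim \Lambda_2^2 L^{(3-m)/2}$, not $L^{(m+1)/2}$).

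What is missing is precisely the viscosity form of the Bernstein trick: before subtracting, one multiplies the subsolution inequality at $\bar x$ by a factor $s^2>1$ with $s^2=1+\beta|\bar x-\bar y|$, producing the term $(s^2-1)H(p,\bar x)\ge \beta|\bar x-\bar y|(a_2 L^m - M_2)$, which survives division by $|\bar x-\bar y|$ and yields the genuine coercive term $\beta a_2 L^m$. The matrix side of the argument must be rescaled in tandem, contracting the Crandall--Ishii inequality against the block matrix built from $s\sigma(\bar x)$ and $\sigma(\bar y)$ rather than the unscaled one you cite from Step 3 of the proof of Theorem \ref{a.comp.mono}; the resulting diffusion contribution is then \emph{linear} in $L$, of size $L\Lambda_2^2(1+\beta^2)$, and is balanced against $\beta a_2 L^m$ by choosing $\beta\sim(1+\Lambda_1)/a_2$. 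This is what yields the exponent $2/(m-1)$ in \eqref{e.Lmu}, via $L^{m-1}\lesssim\Lambda_2^2(1+\Lambda_1)/a_2^2$. Without the $s$-rescaling, neither the coercive term nor the stated Lipschitz constant can be obtained.
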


The proof of Theorem~\ref{t.LIP} is by \emph{Bernstein's method}~\cite{Bern}, which is a technique for deriving \emph{a priori}~$L^\infty$ bounds on the gradient of a solution of an elliptic PDE by differentiating the equation and applying the maximum principle. The main idea is that a power of $|Du|$ should be a subsolution of an elliptic equation, so the maximum principle forbids $|Du|$ from attaining a local maximum. For example, if $u$ is harmonic, then $|Du|^2$ is subharmonic. Modifying the argument by inserting appropriate cutoff functions, one can deduce that $|Du|$ cannot be large away from the boundary of the underlying domain; that is, the technique yields $L^\infty$ bounds on $Du$.

The fact that we work with viscosity solutions complicates the details of the Bernstein argument, since we only assume \emph{a priori} that the solution is continuous (but we must assume that it is continuous-- the argument does not work for discontinuous solutions and indeed the result is false). The proof in our setting (with similar structural conditions) in the case that everything is smooth can be found for example in~\cite[Lemma~4.8]{AS1}. Barles~\cite{Ba}  was the first to implement a modification of Bernstein's method in the framework of viscosity solutions. The idea is that, since we cannot assume $Du$ exists in any useful sense, rather than differentiating the equation and applying the comparison principle in two distinct steps, we must differentiate the equation ``inside the proof" of the comparison principle.

Local Lipschitz estimates like the one contained in Theorem~\ref{t.LIP} are well-known (c.f.~\cite{CDLP}), and we include the proof here for two reasons. First, the argument presented here is new and we find it to be less involved and more straightforward than others we could find in the literature, which do not exactly match our assumptions. Second, due to the efficiency of the argument, we are able to derive an explicit Lipschitz constant which, in terms of the other parameters in the structural hypotheses, is sharp. This explicit estimate plays an important role in the analysis for the stochastic homogenization of these equations, see~\cite{AC,AT}. 

Before giving the proof of Theorem~\ref{t.LIP}, we first recall that, in the superquadratic case, $m>2$, we obtain a H\"older estimate for \emph{subsolutions} (which may be \emph{a priori} discontinuous) using only the scaling of the equation. This is because, for a superquadratic Hamilton-Jacobi equation, the second-order term is of secondary importance to the strongly coercive Hamiltonian on small length scales and the equation  behaves in certain respects like a first-order equation. This was previously observed for example in~\cite{PLL,LL,CDLP}. We use this estimate in the proof of Theorem~\ref{t.LIP}.

The proof of the H\"older bound is simple: we exhibit an explicit, smooth supersolution in a punctured ball which blows up on the boundary of the ball.

\begin{lem}
\label{l.superquad}
Assume that $\m > 2$ and suppose that $u\in \USC(B_2)$ satisfies
\begin{equation}\label{}
-\Lambda_2^2\! \left|D^2u\right| +a_2|Du|^\m  \leq M_2 \quad \mbox{in} \ B_2.
\end{equation} 
Then, for every $x,y\in B_1$,
\begin{equation} \label{e.superquad.hod}
 |u(x) - u(y)| \leq K |x-y|^{\gamma}, \quad \mbox{where} \quad \gamma:= \frac{m-2}{m-1}
\end{equation}
and $K>0$ is given explicitly by
\begin{equation*}
K:= C \left( \left( \frac{\Lambda_2^2}{a_2}\right)^{\frac1{\m-1}} + \left( \frac {M_2}{a_2}\right)^{\frac1\m}  \right)
\end{equation*}
and $C>0$ depends only on $\d$ and $\m$.
\end{lem}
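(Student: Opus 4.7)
The plan is to prove the lemma by exhibiting, for each fixed $y \in B_1$, an explicit smooth radial barrier $\phi$ defined on a punctured ball $B_R(y) \setminus \{y\}$ (with $R$ small enough that $\overline{B_R(y)} \subset B_2$) that is a strict supersolution of the universal inequality, blows up on $\partial B_R(y)$ so that comparison on the outer boundary is automatic, and satisfies $\phi(x) - u(y) \leq K|x-y|^\gamma + o(|x-y|^\gamma)$ as $x \to y$. Comparison with $u$ then yields the Hölder estimate.

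I would try the ansatz
\[
\phi(x) = u(y) + K|x-y|^\gamma + \epsilon\bigl((R-|x-y|)^{-\alpha} - R^{-\alpha}\bigr)
\]
with $\alpha > 0$ to be chosen, $\epsilon > 0$ small, and $K$ of the structural size $(\Lambda_2^2/a_2)^{1/(m-1)} + (M_2/a_2)^{1/m}$. The crucial identity is the critical scaling $m(\gamma-1) = \gamma - 2 = -m/(m-1)$, which makes the leading-order contributions of the main term match: a direct radial computation gives $|D(K|x-y|^\gamma)|^m = (K\gamma)^m|x-y|^{\gamma-2}$ and $\|D^2(K|x-y|^\gamma)\| \leq C_m K|x-y|^{\gamma-2}$, so the inequality $-\Lambda_2^2\|D^2\phi\|+a_2|D\phi|^m>M_2$ reduces for the leading term to the algebraic condition $a_2(K\gamma)^m - \Lambda_2^2 C_m K > C_m M_2 R^{2-\gamma}$, solvable by the stated choice of $K$. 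The boundary correction vanishes to first order at $y$, so it contributes $O(\epsilon \cdot |x-y|) = o(|x-y|^\gamma)$ near the center (using $\gamma<1$), and near $\partial B_R(y)$ the Hamiltonian part of the correction dominates its own second-order part since for any $\alpha>0$ we have $m(\alpha+1) > \alpha+2$ (this is where $m > 2$ enters). I expect to verify the strict supersolution status in the intermediate regime by showing that the leading-term inequality already leaves room $\theta > 0$, and that the $\epsilon$-perturbation to gradient and Hessian remains controllably small for small $\epsilon$.

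Next, I would apply the strict comparison principle of Theorem~\ref{a.comp.strict} on the annulus $B_R(y) \setminus \overline{B_r(y)}$ for small $r > 0$. On the outer boundary $\phi = +\infty \geq u$ automatically, since $u$ is bounded above on compact subsets of $B_2$ by USC. On the inner boundary, USC of $u$ at $y$ yields $\max_{|x-y|=r} u \leq u(y) + o_{r\to 0}(1)$, while $\phi \geq u(y) + Kr^\gamma + \epsilon \alpha r R^{-\alpha-1}$; hence for $r$ sufficiently small $\phi > u$ on $\partial B_r(y)$. Comparison then gives $\phi \geq u$ on the annulus, and letting $r \to 0$ followed by $\epsilon \to 0$ produces $u(x) - u(y) \leq K|x-y|^\gamma$ for $x$ in a definite neighborhood of $y$; swapping the roles of $x$ and $y$ then gives the two-sided bound.

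The main obstacle is twofold. First, verifying the strict supersolution property for the full $\phi$ uniformly in $s = |x-y| \in (0,R)$: the near-center and near-boundary regimes are clear from criticality and the superquadratic exponent, but one must check that the correction and the leading term do not cancel in the intermediate regime---this is handled by keeping $\epsilon$ small relative to $K$ and using superquadraticity to absorb cross terms in the expansion of $|D\phi|^m$ via $(a+b)^m \geq a^m + m a^{m-1} b$ for $a,b\geq 0$. Second, upgrading from the local estimate (on scales $\lesssim R$, where $R$ is a structural fraction of $\dist(y,\partial B_2)$) to a uniform bound on all of $B_1$ with the explicit constant: this follows from a covering and oscillation argument, since the local bound gives $\osc_{B_R(y)} u \leq CKR^\gamma$, hence a uniform oscillation bound on $B_1$, which for $|x-y|\geq R/2$ converts to a Hölder bound via $|x-y|^\gamma \geq (R/2)^\gamma$.
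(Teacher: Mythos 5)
Your strategy coincides with the paper's: build an explicit smooth radial supersolution exploiting the critical identity $m(\gamma-1)=\gamma-2$, with $\gamma$-H\"older growth at the center and blowup at a finite radius, and conclude by a maximum-principle comparison. The paper packages both behaviors in the single multiplicative barrier $\phi(z)=K\left(1-|z-x|^2\right)^{-1}|z-x|^\gamma$, for which the supersolution verification is a direct product-rule computation, whereas your additive ansatz forces a three-regime analysis with cross terms (from expanding $|D\phi|^m$ and from the $\phi'(s)/s$ entry of the Hessian) that, while controllable, is substantially longer. One correction: the appeal to Theorem~\ref{a.comp.strict} is misplaced, since the Pucci-type inequality $-\Lambda_2^2|D^2u|+a_2|Du|^m\leq M_2$ is not of the form $-\tr(A(x)D^2 u)+H(Du,x)=\mu$ treated by that theorem. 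But no abstract comparison is needed: since $\phi$ is smooth and a strict classical supersolution, any interior local maximum of $u-\phi$ would make $\phi$ a valid test function for $u$ and contradict strictness, so $u-\phi$ has no local maximum in the punctured ball; combined with the blowup of $\phi$ on the outer boundary and $\phi\to u(y)$ at the center, this is all that is needed, and it also obviates the $r\to 0$, $\epsilon\to 0$ double limit.
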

\begin{proof}
Observe that $0 < \gamma <1$ and $\m(\gamma-1) = \gamma-2$. Also fix $x\in B_1$. We claim that, for an appropriate constant $K> 0$, the function 
\begin{equation*}\label{}
\phi(y):=  K \left( 1-|y-x|^2 \right)^{-1} |y-x|^{\gamma}
\end{equation*}
is a smooth solution of
\begin{equation}\label{e.phidemo}
-\Lambda_2^2|D^2\phi| + a_2 |D\phi|^\m  > M_2 \quad \mbox{in} \ B_1(x) \setminus \{ x \}. 
\end{equation}
We first show that~\eqref{e.phidemo} implies~\eqref{e.superquad.hod}. It is immediate from~\eqref{e.phidemo} and the definition of viscosity subsolution that the function $u(\cdot) - u(x) - \phi(\cdot)$ has no local maximum in $B_1 (x)\setminus \{ x \}$. Since $\phi(y)$ blows up as $y \to \partial B_1(x)$, we deduce the supremum of this function in $B_1(x)$ is achieved at $x$, where it vanishes. Hence it is nonpositive in $B_1(x)$, and we obtain, for every $y\in B_{1/2}(x)$,
\begin{equation*}\label{}
u(y) - u(x)  \leq \phi(y) = K \left( 1-|y-x|^2 \right)^{-1} |y-x|^{\gamma} \leq \frac{4}{3} K |y-x|^{\gamma}. 
\end{equation*}
The triangle inequality then gives the desired estimate.

The verification of~\eqref{e.phidemo} is just a routine calculation. Since~\eqref{e.phidemo} is transition invariant, we may suppose $x=0$. We compute
\begin{align*}\label{}
\left| D\phi(y) \right|^\m & = K^\m \left( 1-|y|^2 \right)^{-2\m} |y|^{m(\gamma-1)} \left( 2|y|^2+ \gamma\left(1-|y|^2\right) \right)^m \\
& \geq K^\m \left( 1-|y|^2 \right)^{-2\m} |y|^{\gamma-2} \gamma^m
\end{align*}
and, for a constant $C> 0$ depending only on $\d$, 
\begin{equation*}\label{}
\left| D^2 \phi(y) \right| \leq C K \left( 1-|y|^2 \right)^{-3} \left| y \right|^{\gamma-2}.
\end{equation*}
Therefore, for $C,c>0$ depending on $d$ and $m$,
\begin{align*} \label{}
-\Lambda_2^2|D^2\phi| + a_2 |D\phi|^\m & \geq \left(   -CK \Lambda_2^2  + c a_2 K^m\left( 1-|y|^2 \right)^{3-2\m} \right) \left( 1-|y|^2 \right)^{-3} \left| y \right|^{\gamma-2} \\
& \geq \left(   -CK \Lambda_2^2  + c a_2 K^m\right) \left( 1-|y|^2 \right)^{-3} \left| y \right|^{\gamma-2}.
\end{align*}
We now select $K>0$ large enough that the term in the first parentheses is at least $M_2$. It suffices to take 
\begin{equation*} \label{}
K:= C\left( \left( \frac{\Lambda_2^2}{a_2}\right)^{\frac1{\m-1}} + \left( \frac {M_2}{a_2}\right)^{\frac1\m}  \right),
\end{equation*}
for $C>0$ depending on $d$ and $m$. Since the $\left( 1-|y|^2 \right)^{-3} \left| y \right|^{\gamma-2} >1$ in $B_1\setminus\{ 0 \}$, we obtain~\eqref{e.phidemo}, as desired.
\end{proof}

We now give the proof of the Lipschitz estimate.

\begin{proof}[{\bf Proof of Theorem~\ref{t.LIP}}]
For the sake of clarity, we first give the argument without using a cutoff function, which makes it much easier to follow the underlying ideas. This is done in Step~0. We then give the complete proof, beginning in Step~1, by modifying this argument to include the cutoff function.
 
{\it Step 0.} We suppose that $L> 0$ and $x_0,y_0\in B_1$ such that 
\begin{equation}\label{e.FPL}
u(x_0) - u(y_0) - L|x_0-y_0| = \sup_{x,y\in B_1} \left(u(x)-u(y)-L|x-y| \right) > 0 
\end{equation}
and argue that $L > 0$ cannot be too large. The fact that the supremum in~\eqref{e.FPL} is attained at some $x_0,y_0 \in B_1$ is an unjustified assumption removed below via the use of a cutoff function.

Proceeding with the argument, we observe that since the supremum in~\eqref{e.FPL} is positive, we must have $x_0 \neq y_0$. As $u$ is a solution of~\eqref{e.uLipPde}, the Crandall-Ishii lemma~\cite[Lemma 3.2]{CIL} yields, for each $\ep > 0$,  matrices $X_\ep, Y_\ep \in \Sy$ which satisfy the matrix inequality
\begin{equation}\label{e.mats}
\begin{pmatrix} X_\ep &0\\ 0&-Y_\ep \end{pmatrix} \leq J + \ep J^2, 
\end{equation}
where we have defined the matrices
\begin{equation}\label{e.JandX}
 J:= \frac{L}{|x_0-y_0|} \begin{pmatrix} Z & -Z \\ -Z& Z \end{pmatrix} \quad \mbox{and} \quad  Z:= I_d - \frac{x_0-y_0}{|x_0-y_0|} \otimes \frac{x_0-y_0}{|x_0-y_0|},
\end{equation}
as well as
\begin{multline}\label{e.PDEs}
\delta u(x_0)-\tr\left( A(x_0) X_\ep \right) + H\left( L\frac{x_0-y_0}{|x_0-y_0|} , x_0 \right) \leq 0 \\ \leq \delta u(y_0)-\tr\left( A(y_0) Y_\ep \right) + H\left( L\frac{x_0-y_0}{|x_0-y_0|} , y_0 \right).
\end{multline}
The rest of the argument is concerned with deriving a bound for $L$ from~\eqref{e.mats} and~\eqref{e.PDEs}.

Fix $s>1$ to be selected below. Multiplying both sides of the matrix inequality~\eqref{e.mats} on the right by the nonnegative matrix
\begin{equation}\label{}
A_s:= \frac12 \begin{pmatrix} s^2  \sigma^t(x_0) \sigma(x_0) & s \sigma^t(x_0) \sigma(y_0)  \\ s \sigma^t(y_0) \sigma(x_0)  &\sigma^t(y_0) \sigma(y_0) \end{pmatrix} = \frac12 \begin{pmatrix}  s \sigma(x_0) \\ \sigma(y_0) \end{pmatrix}^t \begin{pmatrix}  s \sigma(x_0) \\ \sigma(y_0) \end{pmatrix} \geq 0
\end{equation}
and taking the trace of the resulting expression, we get
\begin{equation}\label{e.pause}
\tr\left( s^2A(x_0) X_\ep - A(y_0) Y_\ep \right) \leq \tr\left( J A_s \right) + \ep \tr\left( J^2 A_s \right). 
\end{equation}
We next estimate the right side of the last expression (since we are going to set $\ep \to 0$ eventually, we ignore the second term). We have:
\begin{align*}
\tr\left( J A_s \right) & = \frac{L}{2|x_0-y_0|} \tr\big(   (s\sigma(x_0)-\sigma(y_0)) \,Z\,(s\sigma(x_0)-\sigma(y_0))^t\big) & \mbox{(computation)}\\ & \leq  \frac{L}{2|x_0-y_0|}\tr\big(  (s\sigma(x_0)-\sigma(y_0)) (s\sigma(x_0)-\sigma(y_0))^t\big) & \mbox{($Z^2=Z$)} \\
& = \frac{L}{2|x_0-y_0|} \left| s\sigma(x_0) - \sigma(y_0) \right|^2 \\
& \leq \frac{L}{2|x_0-y_0|} \left(  \Lambda_2|s-1| + \Lambda_2|x_0-y_0| \right)^2  & \mbox{(\eqref{e.sigbnd} \& \eqref{e.siglip})} \\
& \leq \frac{L \Lambda_2^2}{|x_0-y_0|} \left( (s-1)^2 + |x_0-y_0|^2  \right).  & \mbox{(Cauchy ineq.)}
\end{align*}
Setting $s^2:= 1 + \beta |x_0-y_0|$ with $\beta > 0$ to be selected below, and noticing that $(s-1)^2 \leq (s+1)^2(s-1)^2 = \beta^2|x_0-y_0|^2$, this expression simplifies to
\begin{equation}\label{e.pauseR}
\tr\left( J A_s \right) \leq L\Lambda_2^2 \left( 1+ \beta^2 \right) |x_0-y_0|.
\end{equation}
We are done with the right side of~\eqref{e.pause} and we proceed to estimate its left side from below:
\begin{align*}
\lefteqn{\tr\left( s^2A(x_0) X_\ep - A(y_0) Y_\ep \right)} 
\qquad \qquad & \\ & \geq s^2 H\left( L\frac{x_0-y_0}{|x_0-y_0|} , x_0 \right) - H\left( L\frac{x_0-y_0}{|x_0-y_0|} , y_0 \right) 
+\delta(s^2u(x_0)-u(y_0))& \mbox{(\eqref{e.PDEs})}\\
& \geq (s^2-1)  H\left( L\frac{x_0-y_0}{|x_0-y_0|} , x_0 \right)  - \left( \Lambda_1 L^\m+M_2 \right) |x_0-y_0|
+\delta (s^2-1)u(x_0) & \mbox{(\eqref{e.HsubqLip})}\\
& \geq (s^2-1) \left( a_2 L^\m - M_2  \right)  - \left( \Lambda_1 L^\m+M_2 \right) |x_0-y_0|
+\delta (s^2-1)u(x_0).& \mbox{(\eqref{e.Hsubq})}
\end{align*}
Inserting $s^2:=1+\beta|x_0-y_0|$, comparing with~\eqref{e.pause} and~\eqref{e.pauseR} and dividing by $|x_0-y_0|$, we obtain
\begin{equation*}\label{}
\beta a_2 L^m \leq \beta \left( M_2 -\delta u(x_0) \right) + \Lambda_1L^m + M_2 + L \Lambda_2^2\left(1+\beta^2\right) + \ep \tr\left( J^2 A_s \right).
\end{equation*}
Sending $\ep \to 0$ and a slight rearrangement give
\begin{equation*}\label{}
L^\m \left( \beta a_2 - \Lambda_1 \right) \leq \beta \left( M_2 -\delta u(x_0) \right) + M_2 + L \Lambda_2^2\left(1+\beta^2\right).
\end{equation*}
We now choose $\beta := (1+2\Lambda_1)/a_2$. This gives
\begin{align*}\label{}
L^\m (1+\Lambda_1) \leq \frac{2}{a_2} \left( M_2-\delta u(x_0) \right) \left( 1 + \Lambda_1 \right) + \frac{1}{a_2^2}L\Lambda_2^2 \left( 3+8\Lambda_1^2 \right).
\end{align*}
We now use the elementary fact that 
\begin{equation*} \label{}
x\geq 2^{1/(m-1)} \left( a^{1/(m-1)} + b^{1/m} \right) \quad \implies \quad x^m \geq ax +b,
\end{equation*}
to deduce that 
\begin{equation*}\label{}
L \leq 2^{1/(m-1)} \left( \left( \frac{8\Lambda_2^2(1+\Lambda_1)}{a_2^2}  \right)^{1/(m-1)} + \left( \frac{2(M_2+\delta |u(x_0)|)}{a_2} \right)^{1/m}   \right).
\end{equation*}

{\it Step 1.} 
We now begin the general (rigorous) argument, addressing the problem that the supremum in~\eqref{e.FPL} may not be attained in general. We begin with the observation that Lipschitz continuity is a \emph{local} property. That is, to prove $\sup_{x,y\in B_1} |u(x) - u(y)| \leq L|x-y|$, it suffices to show that, for every $\hat x\in B_1$, 
\begin{equation}\label{e.wtsliploc}
\limsup_{x \to \hat x}  \frac{u(\hat x) - u(x)}{|\hat x-x|} \leq L. 
\end{equation}
We therefore proceed by fixing~$L\geq 1$ and~$\hat x \in B_1$ such that 
\begin{equation}\label{e.liplocass}
\limsup_{x \to\hat x} \frac{ u(\hat x) - u(x)}{|\hat x-x|} > L 
\end{equation}
and derive a contradiction by taking $L$ to be too large. 

Playing the role of the cutoff function is a positive smooth function $\phi :  B_{3/2} \to [1,\infty)$ which satisfies $\phi \equiv 1$ on $B_1$ and $\phi(x) \to +\infty$ as $|x| \to \partial  B_{3/2}$. We also take $\phi$ so that, for each $x \in  B_{3/2}$,
\begin{equation}\label{e.phi}
\left|D\phi(x) \right| \leq C \left( \phi(x) \right)^\m \quad \mbox{and} \quad \left| D^2\phi(x) \right| \leq C \left( \phi(x) \right)^{2\m -1}.
\end{equation}
A regularization of the map $x\mapsto \max\left\{  \left (2 \dist\left(x,\partial  B_{3/2}\right)\right)^{-\frac{1}{\m-1}}, 1\right\}$ will do.

For each $\alpha>0$ sufficiently small, there exist points $x_\alpha,y_\alpha\in  B_{3/2}$ which satisfy
\begin{multline}\label{e.FPL2}
u(x_\alpha) - u(y_\alpha) - L\phi(y_\alpha) |x_\alpha-y_\alpha| - \frac1{2\alpha} |x_\alpha-y_\alpha|^2 \\
= \sup_{x,y\in  B_{3/2}} \left( u(x)-u(y)-L\phi(y) |x-y| - \frac1{2\alpha} |x-y|^2\right) > 0.
\end{multline}
Here is the reason: that the supremum in~\eqref{e.FPL2} is positive for each $\alpha> 0$ is due to~\eqref{e.liplocass}; the fact that there are points $x_\alpha,y_\alpha\in  B_{3/2}$ which attain this supremum, for sufficiently small $\alpha> 0$, is due to the uniform continuity of $u$ on $\overline B_{3/2}$, the positivity of the supremum, and the fact that~$\phi$ penalizes points which are too close to $\partial  B_{3/2}$. Indeed, the positivity of the supremum ensures that $y_\alpha\neq x_\alpha$, and in fact $|x_\alpha-y_\alpha|$ is bounded below by a positive constant in terms of the uniform continuity of~$u$. Since $|x_\alpha-y_\alpha|$ cannot be too small, the presence of $\phi$ ensures that $y_\alpha$ is kept away from $\partial  B_{3/2}$; since $u$ is bounded on $\overline B_{3/2}$, the quadratic term ensures that $|x_\alpha-y_\alpha|$ is also small:
\begin{equation} \label{e.dreg}
 \frac1{2\alpha} |x_\alpha-y_\alpha|^2 \leq \osc_{ B_{3/2}} u < +\infty.
\end{equation}
Therefore $x_\alpha$ is close to $y_\alpha$ and thus away from $\partial  B_{3/2}$, for sufficiently small $\alpha>0$.

Using to~\eqref{e.FPL2}, the continuity of $u$ and~\eqref{e.dreg}, we have
\begin{multline}\label{e.capt}
\limsup_{\alpha\to 0} \left( L\phi(y_\alpha) |x_\alpha-y_\alpha| +  \frac1{2\alpha} |x_\alpha-y_\alpha|^2 \right) \\
\leq \limsup_{\alpha\to 0} \, \sup \left\{ u(y) - u(z) \, : \, y,z\in B_{3/2},  \ |y-z| \leq \alpha^{\frac12} \osc_{B_{3/2}}u  \right\}  = 0.
\end{multline}
In the case that $m>2$, we may apply Lemma~\ref{l.superquad} to do better than~\eqref{e.capt}. We have
\begin{equation} \label{e.captm2}
L\phi(y_\alpha) |x_\alpha-y_\alpha| +\frac1{2\alpha} |x_\alpha-y_\alpha|^2 \leq u(x_\alpha) - u(y_\alpha) \leq \tilde K \left| x_\alpha - y_\alpha \right|^\gamma, 
\end{equation}
where $\gamma:=(m-2)/(m-1)$ and $\tilde K$ is the explicit constant $K$ in Lemma~\ref{l.superquad}. This implies in particular that 
\begin{equation} \label{e.PaQa}
(\phi(y_\alpha))^{m-1} \left| x_\alpha-y_\alpha\right| \leq L^{1-m}\tilde K^{m-1}, 
\end{equation}
which will be useful below. 

The rest of the argument is similar to Step~0; the differences due to the presence of $\phi$ and the quadratic term do not cause any real harm, only some bookkeeping headaches.

\emph{Step 2.} Applying the Crandall-Ishii lemma and using that $u$ is a solution of~\eqref{e.uLipPde}, we obtain, for each $\ep > 0$ and sufficiently small $\alpha> 0$, symmetric matrices $X_{\ep,\alpha}, Y_{\ep,\alpha}\in \Sy$ such that 
\begin{equation}\label{e.mats-g}
\begin{pmatrix} X_{\ep,\alpha} &0\\ 0&-Y_{\ep,\alpha} \end{pmatrix} \leq J_\alpha+ \ep J_\alpha^2, 
\end{equation}
and
\begin{equation}\label{e.PDEs-g}
\delta u(x_\alpha)-\tr\left( A(x_\alpha) X_{\ep,\alpha} \right) + H\left( \left( L \phi(y_\alpha) + \frac{|x_\alpha-y_\alpha|}{\alpha} \right) \frac{x_\alpha-y_\alpha}{|x_\alpha-y_\alpha|} , x_\alpha\right) \leq 0 
\end{equation}
\begin{multline}\label{e.PDEs-g2}
0 \leq \delta u(y_\alpha) -\tr\left( A(y_\alpha) Y_{\ep,\alpha} \right) \\+ H\bigg( \underbrace{\left( L \phi(y_\alpha) + \frac{|x_\alpha-y_\alpha|}{\alpha} \right) \frac{x_\alpha-y_\alpha}{|x_\alpha-y_\alpha|}}_{=:P_\alpha} - \underbrace{L|x_\alpha-y_\alpha| D\phi(y_\alpha)}_{=:Q_\alpha} , y_\alpha\bigg),
\end{multline}
where we have defined
\begin{equation}\label{}
J_\alpha:=\underbrace{\frac{L\phi(y_\alpha)}{|x_\alpha-y_\alpha|} \begin{pmatrix} Z_1&-Z_1\\-Z_1&Z_1 \end{pmatrix} + \frac{1}{\alpha} \begin{pmatrix} I_d&-I_d\\-I_d&I_d \end{pmatrix}}_{=:J_\alpha'} + \underbrace{ L\begin{pmatrix} 0&Z_2\\Z_2^t&Z_3 \end{pmatrix}}_{=:J_\alpha''}
\end{equation}
and
\begin{multline*}
Z_1:=I_d - \frac{x_\alpha-y_\alpha}{|x_\alpha-y_\alpha|} \otimes \frac{x_\alpha-y_\alpha}{|x_\alpha-y_\alpha|}, \quad Z_2:=D\phi(y_\alpha)\otimes \frac{x_\alpha-y_\alpha}{|x_\alpha-y_\alpha|}, \\
 \mbox{and} \quad Z_3:=-(Z_2+Z_2^t)+D^2\phi(y_\alpha)|x_\alpha-y_\alpha|.
\end{multline*}
With $s>0$, we multiply both sides of \eqref{e.mats-g} on the right by the matrix
\begin{equation}\label{}
A_s:= \frac12\begin{pmatrix} s^2  \sigma^t(x_\alpha) \sigma(x_\alpha) & s \sigma^t(x_\alpha) \sigma(y_\alpha)  \\ s \sigma^t(y_\alpha) \sigma(x_\alpha)  &\sigma^t(y_\alpha) \sigma(y_\alpha) \end{pmatrix} = \frac12\begin{pmatrix}  s \sigma(x_\alpha) \\ \sigma(y_\alpha) \end{pmatrix}^t \begin{pmatrix}  s \sigma(x_\alpha) \\ \sigma(y_\alpha) \end{pmatrix} \geq 0
\end{equation}
and then take the trace of the result to obtain
\begin{equation}\label{e.pause-g}
\tr\left( sA(x_\alpha) X_{\ep,\alpha} - A(y_\alpha) Y_{\ep,\alpha} \right) \leq \tr\left( J_\alpha A_s \right) + \ep \tr\left( J_\alpha^2 A_s \right). 
\end{equation}

The rest of the argument is concerned with deriving contradiction from~\eqref{e.PDEs-g},~\eqref{e.PDEs-g2} and~\eqref{e.pause-g}.

\emph{Step 3.} 
We estimate the right side of~\eqref{e.pause-g} from above, ignoring the second term. In a very similar way to the first string of inequalities in Step~0, we get
\begin{equation}\label{}
\tr\left( J_\alpha' A_s \right) \leq \Lambda_2^2 \left( \frac{L\phi(y_\alpha)}{|x_\alpha-y_\alpha|}+  \frac{1}{\alpha}\right)  \left( (s-1)^2 + |x_\alpha - y_\alpha |^2  \right)
\end{equation}
and, by a routine calculation,
\begin{equation*}\label{}
\tr \left( J_\alpha'' A_s \right) \leq L\Lambda_2^2\left( (s-1)+|x_\alpha-y_\alpha| \right) |D\phi(y_\alpha)|   + \frac12L\Lambda_2^2  |D^2 \phi(y_\alpha)| \cdot |x_\alpha-y_\alpha|.
\end{equation*}
We set $s:=1+\beta|x_\alpha-y_\alpha|$, with $\beta>0$  selected below, sum the previous two lines and express some quantities in terms of $|P_\alpha| =L\phi(y_\alpha)+ \alpha^{-1} |x_\alpha-y_\alpha|$ and $|Q_\alpha| = L|x_\alpha-y_\alpha|\cdot |D\phi(y_\alpha)|$ to obtain
\begin{multline} \label{e.uppbndp}
\tr\left( J_\alpha A_s\right) \\ \leq \Lambda_2^2 |P_\alpha| \left( \beta^2+1 \right) \left|x_\alpha-y_\alpha\right|  + \Lambda_2^2 |Q_\alpha|\left( \beta+1 \right) + \frac12L\Lambda_2^2 |D^2\phi(y_\alpha)|\cdot |x_\alpha-y_\alpha|.
\end{multline}

\emph{Step 4.}
We estimate the left side of~\eqref{e.pause-g} from below. We proceed in a similar way as in Step~0, using the inequalities~\eqref{e.PDEs-g} and~\eqref{e.PDEs-g2} with the structural conditions~\eqref{e.Hsubq} and~\eqref{e.HsubqLip}; unlike Step~1, here we also need~\eqref{e.HsubqDp}. In preparation to apply the latter, and for future reference, we first record some estimates involving the quantities~$|Q_\alpha|$ and~$|P_\alpha|$. By~\eqref{e.phi}, we have
\begin{equation} \label{e.PQdp}
\frac{|Q_\alpha|}{|x_\alpha-y_\alpha|} \leq C \phi^{m-1}(y_\alpha) \left| P_\alpha \right| \leq C L^{1-m} \left| P_\alpha \right|^{m}.
\end{equation}
In the subquadratic case that $1< m\leq 2$, the first inequality of~\eqref{e.PQdp} and~\eqref{e.capt} yield
\begin{equation} \label{e.PQd}
\lim_{\alpha\to 0} \frac{|Q_\alpha|}{|P_\alpha|}\leq \lim_{\alpha\to 0} C |\phi(y_\alpha)|^{\m-1}|x_\alpha-y_\alpha|=0.
\end{equation}
In the superquadratic case $m>2$, we use the first inequality of~\eqref{e.PQdp} and~\eqref{e.PaQa} to get 
\begin{equation} \label{e.buggy}
|Q_\alpha| \leq C L^{1-m} \tilde K^{m-1} |P_\alpha|. 
\end{equation}
By imposing the restriction $L\geq C\tilde K$, we may assume  in both cases that $|Q_\alpha| \leq |P_\alpha|$ for small~$\alpha$. So we henceforth assume
\begin{equation} \label{e.restr1}
L\geq C \left( \left( \frac{\Lambda_2^2}{a_2}\right)^{\frac1{\m-1}} + \left( \frac {M_2}{a_2}\right)^{\frac1\m}  \right).
\end{equation}
For sufficiently small $\alpha$, we now estimate
\begin{align*} \label{}
\lefteqn{ \tr\left( sA(x_\alpha) X_{\ep,\alpha} - A(y_\alpha) Y_{\ep,\alpha} \right)} \qquad & \\ 
& \geq s H\left(P_\alpha,x_\alpha\right) - H\left(P_\alpha-Q_\alpha,y_\alpha\right) 
+\delta (s u(x_\alpha)-u(y_\alpha)) \\
& \geq(s-1)H\left(P_\alpha,x_\alpha\right)  - \Lambda_1 \left( 1+2\left| P_\alpha\right| \right)^{\m-1} \left|Q_\alpha\right| - \left( \Lambda_1\left| P_\alpha\right|^{\m}+M_2 \right) |x_\alpha-y_\alpha|\\
& \qquad +\delta (s-1) u(x_\alpha) \\ 
& \geq (s-1)\left( a_2\left| P_\alpha\right|^\m  -M_2 \right) - \Lambda_1 \left( 1+2\left| P_\alpha\right| \right)^{\m-1} \left|Q_\alpha\right| - \left( \Lambda_1\left| P_\alpha\right|^{\m}+M_2 \right) |x_\alpha-y_\alpha|   \\
& \qquad +\delta (s-1) u(x_\alpha).
\end{align*}
Here we used~\eqref{e.PDEs-g} and~\eqref{e.PDEs-g2} in second line, ~\eqref{e.HsubqLip},~\eqref{e.HsubqDp} and~$|Q_\alpha| \leq |P_\alpha|$ to get the third line, and finally~\eqref{e.Hsubq} in the fourth line.

\emph{Step 5.}
We complete the proof by combining the last inequality of Step~4 with~\eqref{e.pause-g} and~\eqref{e.uppbndp}. We also insert $s:=1+\beta|x_\alpha-y_\alpha|$ for $\beta > 1$ to be selected and then send $\ep \to 0$, to obtain
\begin{align*} \label{}
\beta \left( a_2 \left| P_\alpha\right|^\m - M_2  \right) \left|x_\alpha-y_\alpha\right| & \leq \Lambda_1 \left( 1+2\left| P_\alpha\right| \right)^{\m-1} \left|Q_\alpha\right| + \left( \Lambda_1\left| P_\alpha\right|^{\m}+M_2 \right) |x_\alpha-y_\alpha| \\
& \quad +\Lambda_2^2 |P_\alpha| \left( \beta^2+1 \right) |x_\alpha-y_\alpha|+ \Lambda_2^2 |Q_\alpha|\left( \beta+1\right) \\
& \quad  + \frac12 L\Lambda_2^2 |D^2\phi(y_\alpha)| |x_\alpha-y_\alpha| - \beta \delta u(x_\alpha) |x_\alpha-y_\alpha|.
\end{align*}
Dividing by $|x_\alpha-y_\alpha|$ and rearranging, using~\eqref{e.PQdp} to estimate the terms with $|Q_\alpha|$ and the fact that $|P_\alpha| \geq L \geq 1$ to simplify $1+2|P_\alpha|\leq 3|P_\alpha|$, we arrive at
\begin{align*} \label{}
\left(\beta a_2 -\Lambda_1 \right) \left| P_\alpha \right|^\m & \leq
M_2(\beta +1)+ C\Lambda_1 \phi^{m-1}(y_\alpha)\left| P_\alpha \right|^{\m} +\Lambda_2^2 |P_\alpha|\left( \beta^2+1 \right)  \\
& \quad + C\Lambda_2^2 L^{1-m} \left| P_\alpha \right|^m \left( \beta+1 \right)   + \frac12L\Lambda_2^2 |D^2\phi(y_\alpha)|+\delta \beta \|u\|_{L^\infty(B_{3/2})} .
\end{align*}
Here and in the rest of the argument, $C>0$ may depend on $d$ and $m$. To estimate the term involving $D^2\phi(y_\alpha)$, we use~\eqref{e.phi} which gives
\begin{equation*} \label{}
 L \left|D^2\phi(y_\alpha) \right|  \leq CL^{1-\m} \phi^{\m-1}(y_\alpha) |P_\alpha|^\m.
\end{equation*}
Using this and rearranging, we obtain
\begin{multline*} \label{}
\left(\beta a_2 - C \Lambda_1 \phi^{\m-1}(y_\alpha) - C\Lambda_2^2 L^{1-\m}(\beta+\phi^{m-1}(y_\alpha) ) \right) \left| P_\alpha \right|^\m  \\
  \leq M_2(\beta +1) + \Lambda_2^2|P_\alpha| (\beta^2+1) +\beta \delta\|u\|_{L^\infty(B_{3/2})}.
\end{multline*}
We now impose a second restriction on $L$, namely that $L\geq \left( C\Lambda_2^2 / a_2 \right)^{1/(m-1)}$, which allows us to simplify the term in parentheses on the left side of the last inequality, to get
\begin{equation*} \label{}
\left(\frac12 \beta a_2-C (1+\Lambda_1) \phi^{\m-1}(y_\alpha) \right) \left| P_\alpha \right|^\m  \leq M_2(\beta +1) + \Lambda_2^2|P_\alpha| (\beta^2+1)+ \beta \delta\|u\|_{L^\infty(B_{3/2})}.
\end{equation*}
Next, we make our choice of $\beta$: we take
\begin{equation*} \label{}
\beta:= \frac {C}{a_2} \left( 1+ \Lambda_1\right) \phi^{\m-1}(y_\alpha),
\end{equation*}
which leads to the estimate 
\begin{equation*} \label{}
(1+\Lambda_1) \phi^{\m-1}(y_\alpha) \left| P_\alpha \right|^\m \leq \beta M_2+ \beta^2\Lambda_2^2|P_\alpha|+ \beta \delta\|u\|_{L^\infty(B_{3/2})}.
\end{equation*}
Dividing both sides by $(1+\Lambda_1) \phi^{\m-1}(y_\alpha)$, we get
\begin{equation*} \label{}
\left| P_\alpha \right|^\m \leq \frac{C}{a_2} \left( M_2 + \beta \Lambda_2^2\left|P_\alpha\right| + \delta \| u\|_{L^\infty(B_{3/2})} \right).
\end{equation*}
Using~\eqref{e.PQdp} to estimate the middle term on the right side, we obtain 
\begin{equation*} \label{}
\left| P_\alpha \right|^\m - \frac{C\Lambda_2^2(1+\Lambda_1)}{a_2^2} L^{1-m} \left| P_\alpha \right|^m \leq \frac{C}{a_2} \left( M_2 + \delta \| u\|_{L^\infty(B_{3/2})} \right).
\end{equation*}
By strengthening the second restriction to $L\geq \left(C \Lambda_2^2(1+\Lambda_1)/a_2^2 \right)^{1/(m-1)}$, we obtain
\begin{equation*} \label{}
L^m \leq \left| P_\alpha \right|^m \leq \frac{C}{a_2} \left( M_2 + \delta \| u\|_{L^\infty(B_{3/2})} \right). 
\end{equation*}
In conclusion, recalling also~\eqref{e.restr1}, we obtain a contradiction unless 
\begin{equation*} \label{}
L \leq C \left\{  \left( \frac{(1+\Lambda_1)\Lambda_2^2}{a_2^2} \right)^{1/(m-1)} + \left( \frac{M_2 + \delta \| u\|_{L^\infty(B_{3/2})} }{a_2}\right)^{1/m} \right\},
\end{equation*}
for a large enough $C> 0$ depending only on $\d$ and $m$. 
\end{proof}

\begin{remark}
It is possible to extend the argument above to the case in which ~$H$ is merely superlinear in $p$. One obtains explicit Lipschitz estimates for solutions which depend in an appropriate way on the rate of superlinear growth of $H$ in $p$.
\end{remark}

\subsection{Interior Lipschitz estimates: The time-dependent case.}

In this subsection we prove Lipschitz estimates for solutions of the time-dependent equation
\begin{equation}\label{e-C}
u_t-\tr\left(A(x)D^2u\right)+H(Du,x)=0 \quad \text{in} \ \Rd \times (0,\infty).
\end{equation}
The extra difficulty is dealing with the time variable, since the equation is not coercive in $t$. 

We denote $Q_r:=B_r \times (0,r)$.

\begin{lem}\label{T.squad}
Let $T_0>0$  and assume that $m>2$ and $u \in \USC(Q_2)$ satisfy
\begin{equation}
u_t - \Lambda_2^2 |D^2 u| + a_2 |Du|^m \leq M_2 \quad \text{in}\ Q_2,
\end{equation}
and
\begin{equation}\label{ut-bdd-below}
u_t \geq -T_0 \quad \text{in} \ Q_2.
\end{equation}
Then for every $x,y \in B_1$ and $t \in [0,2]$,
\begin{equation}
|u(x,t)-u(y,t)| \leq K |x-y|^\gamma, \quad \text{for}\ \gamma:=\frac{m-2}{m-1},
\end{equation}
and the constant $K$ is given by
\begin{equation*}
K:=C \left( \left(\frac{\Lambda_2^2}{a_2}\right)^{\frac{1}{m-1}}+\left(\frac{M_2+T_0}{a_2} \right)^{\frac{1}{m}} \right)
\end{equation*}
for $C=C(d,m)$.
\end{lem}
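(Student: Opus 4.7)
The plan is to reduce the parabolic estimate to the stationary Lemma~\ref{l.superquad}. Precisely, for each fixed $t_0 \in (0,2)$, I would show that the time slice $v(y) := u(y,t_0)$ is a viscosity subsolution of
\begin{equation*}
- \Lambda_2^2 \left| D^2 v \right| + a_2 \left| Dv \right|^m \leq M_2 + T_0 \quad \text{in } B_2.
\end{equation*}
Once this is in hand, applying Lemma~\ref{l.superquad} to $v$ with $M_2$ replaced by $M_2 + T_0$ yields the claimed H\"older bound on $B_1$ with precisely the stated constant $K$. The edge cases $t_0 \in \{0, 2\}$ can be recovered by running the same argument on one-sided time cylinders, or by a limiting procedure using that $v$ is one-sided Lipschitz in $t$.

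To prove the subsolution property of the slice, I would use a freezing-in-time argument. The hypothesis $u_t \geq -T_0$ is equivalent to the monotonicity that $\tilde u(y,t) := u(y,t) + T_0 t$ is nondecreasing in $t$, which in particular gives
\begin{equation*}
u(y,t) - T_0(t-t_0) \leq u(y,t_0) = v(y) \qquad \text{for } t \leq t_0.
\end{equation*}
Assume $v-\varphi$ has a strict local maximum at $x_0 \in B_2$. For $\alpha > 0$, introduce the space-time test function
\begin{equation*}
\tilde\varphi_\alpha(y,t) := \varphi(y) - T_0(t-t_0) + \frac{(t-t_0)^2}{\alpha},
\end{equation*}
and let $(y_\alpha,t_\alpha)$ be a maximizer of $u - \tilde\varphi_\alpha$ on a small closed space-time cylinder around $(x_0,t_0)$. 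The display above shows that on the lower half $t \leq t_0$ one has $(u - \tilde\varphi_\alpha)(y,t) \leq v(y) - \varphi(y) - \alpha^{-1}(t-t_0)^2 \leq v(x_0) - \varphi(x_0)$, with equality only at $(x_0,t_0)$, which forces $t_\alpha \geq t_0$. A standard doubling-variables argument (using the quadratic penalty and the strictness of the spatial maximum, together with upper semicontinuity of $u$) then gives $(y_\alpha,t_\alpha) \to (x_0,t_0)$ as $\alpha \to 0$. Writing the viscosity subsolution inequality for $u$ at $(y_\alpha,t_\alpha) \in Q_2$, and observing that $\partial_t \tilde\varphi_\alpha(y_\alpha,t_\alpha) = -T_0 + 2(t_\alpha - t_0)/\alpha \geq -T_0$, yields
\begin{equation*}
-T_0 - \Lambda_2^2 \left| D^2\varphi(y_\alpha) \right| + a_2 \left| D\varphi(y_\alpha) \right|^m \leq M_2,
\end{equation*}
and sending $\alpha \to 0$ completes the verification of the subsolution property.

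The only subtle point is ensuring that the maximizer satisfies $t_\alpha \geq t_0$, so that the extra contribution $2(t_\alpha - t_0)/\alpha$ to $\partial_t \tilde\varphi_\alpha$ carries a sign that allows it to be dropped from the viscosity inequality. This is precisely arranged by the linear correction $-T_0(t-t_0)$ in $\tilde\varphi_\alpha$, which cancels exactly the gain from the monotonicity of $\tilde u$ so that $u - \tilde\varphi_\alpha$ on $\{t \leq t_0\}$ is controlled by $v - \varphi$ alone. The remainder of the argument is a routine adaptation of the doubling-variables technique, and the explicit form of $K$ comes directly from the stationary Lemma~\ref{l.superquad} applied with the right-hand side $M_2 + T_0$.
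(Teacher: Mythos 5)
Your proof is correct and takes the same approach as the paper, which dispatches the subsolution property of the time slice with ``it is straightforward to check'' and then invokes Lemma~\ref{l.superquad}; you have simply filled in the ``straightforward'' step with a complete penalized test-function argument. One small sign typo: the monotonicity of $\tilde u$ gives $u(y,t) + T_0(t-t_0) \leq v(y)$ for $t \leq t_0$ (not $u(y,t) - T_0(t-t_0) \leq v(y)$), which is in fact the inequality that your choice of $\tilde\varphi_\alpha$ correctly exploits.
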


\begin{proof}
In light of \eqref{ut-bdd-below}, it is straightforward to check that for each $t \in [0,2]$, $u(\cdot,t)$ is a subsolution of
\begin{equation*}
-\Lambda_2^2 |D^2 u| + a_2 |Du|^m \leq M_2 + T_0 \quad \text{in} \ B_2.
\end{equation*}
The result therefore follows immediately from~Lemma \ref{l.superquad}. 
\end{proof}

\begin{prop}\label{C.LIP}
Suppose that  $u \in C(\overline Q_2)$  is a solution of \eqref{e-C} with $u(\cdot,0)=u_0 \in C^{0,1}(\overline B_2)$ and assume that there exists a constant $T_0>0$ such that
\begin{equation}\label{e.c.lip-t}
u_t \geq - T_0 \quad \text{in}\ Q_2.
\end{equation}
Then, for all $(x,t),(y,t) \in B_1 \times (0,2)$,
\begin{equation}\label{e.c.lip}
|u(x,t)-u(y,t)| \le K |x-y|,
\end{equation}
for $K>0$ given by
\begin{equation*} \label{}
K :=  C \left\{ \left( \frac{(1+\Lambda_1)^{1/2}\Lambda_2}{a_2} \right)^{2/(\m-1)}+\left( \frac{M_2 + T_0 }{a_2}\right)^{1/\m}  \right\}
\end{equation*}
where $C>0$ depends only on $d$ and $\m$.
\end{prop}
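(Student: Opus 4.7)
The plan is to adapt the Bernstein-type argument of Theorem~\ref{t.LIP} to the time-dependent setting. The key structural observation is that by doubling only the spatial variables while keeping a single time variable, the parabolic Crandall--Ishii lemma produces the \emph{same} ``time-derivative'' $\tau$ in the resulting subsolution and supersolution inequalities, so that $\tau$ cancels upon subtraction and we are placed in essentially the stationary setup with $\delta = 0$. The hypothesis~\eqref{e.c.lip-t} enters only through the H\"older regularity input provided by Lemma~\ref{T.squad}, which ultimately supplies the $T_0$ term in the explicit Lipschitz constant.

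For $\beta > 0$ small, I would first replace $u$ by $\bar u := u - \beta / (2 - t)$, which is a subsolution of~\eqref{e-C} with right-hand side $-\beta/(2 - t)^2 \leq 0$ (a straightforward computation), while $u$ itself remains a supersolution. Then, proceeding as in the proof of Theorem~\ref{t.LIP}, I would consider, for $L \geq 1$ and small $\alpha > 0$, the auxiliary function
\begin{equation*}
\Phi(x, y, t) := \bar u(x, t) - u(y, t) - L\phi(y) |x-y| - \frac{1}{2\alpha} |x-y|^2
\end{equation*}
on $\overline B_{3/2} \times \overline B_{3/2} \times [0, 2)$, where $\phi$ is the spatial cutoff introduced in the proof of Theorem~\ref{t.LIP}. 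Assuming for contradiction that the claimed Lipschitz bound fails, $\Phi$ has a positive supremum attained at some $(x_\alpha, y_\alpha, t_\alpha)$. The blow-up of $\beta/(2-t)$ as $t \to 2^-$ keeps $t_\alpha < 2$, while $u_0 \in C^{0,1}(\overline B_2)$ forces $t_\alpha > 0$ once $L > \mathrm{Lip}(u_0)$, since at $t = 0$ the positivity of $\Phi$ would require $L \phi(y_\alpha) \leq \mathrm{Lip}(u_0)$.

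Since the spatial penalty $L\phi(y)|x-y| + |x-y|^2/(2\alpha)$ has no $t$-dependence, the parabolic Crandall--Ishii lemma~\cite[Theorem~8.3]{CIL} yields a single $\tau \in \R$ (appearing in both inequalities below) and symmetric matrices $X_{\ep,\alpha}, Y_{\ep,\alpha}$ satisfying the same matrix inequality~\eqref{e.mats-g}, along with
\begin{align*}
\tau - \tr\!\left(A(x_\alpha) X_{\ep,\alpha}\right) + H(P_\alpha, x_\alpha) &\leq -\frac{\beta}{(2-t_\alpha)^2}, \\
\tau - \tr\!\left(A(y_\alpha) Y_{\ep,\alpha}\right) + H(P_\alpha - Q_\alpha, y_\alpha) &\geq 0,
\end{align*}
where $P_\alpha$ and $Q_\alpha$ are defined exactly as in the proof of Theorem~\ref{t.LIP}. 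Subtracting cancels $\tau$ and leaves the favorable extra term $-\beta/(2-t_\alpha)^2$ on the right, producing precisely the same starting inequality as in the stationary case with $\delta = 0$.

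In the superquadratic range $m > 2$, I would invoke Lemma~\ref{T.squad} --- which is where~\eqref{e.c.lip-t} is consumed --- to obtain a uniform-in-$t$ H\"older bound in space with explicit constant $\tilde K$. This plays the same role as~\eqref{e.captm2} in the stationary argument, controlling $|Q_\alpha|/|P_\alpha|$ as in~\eqref{e.buggy} and forcing a restriction on $L$ analogous to~\eqref{e.restr1} but with $M_2 + T_0$ in place of $M_2$. In the subquadratic range $1 < m \leq 2$, uniform continuity of $u$ on $\overline Q_2$ substitutes for the H\"older input, just as in the stationary argument. From this point Steps~3--5 of the proof of Theorem~\ref{t.LIP} go through with the same bookkeeping, producing a bound on $L$ of the stated form with $T_0$ replacing $\delta \|u\|_{L^\infty(B_{3/2})}$; sending $\beta \to 0$ completes the proof. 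The main obstacle is confirming the cancellation of the time-derivative terms and localizing the maximum of $\Phi$ to the interior of the time interval, both of which are handled by the setup above via the $\beta/(2-t)$ barrier and the initial Lipschitz bound.
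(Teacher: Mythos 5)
The proposal contains a genuine gap: the claim that the time‐derivative ``cancels upon subtraction, placing us in the stationary setup with $\delta = 0$'' is incorrect, and consequently you misidentify where the hypothesis $u_t \geq -T_0$ enters.

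The reason is structural. In the Bernstein argument (Steps~0, 2--5 of Theorem~\ref{t.LIP}) one does \emph{not} simply subtract the two viscosity inequalities: the subsolution inequality at $x_\alpha$ must first be multiplied by $s > 1$ (equivalently, the matrix inequality is tested against $A_s$ with an asymmetric $s^2$ weight), and only then does one subtract. The resulting factor $s-1 = \beta|x_\alpha-y_\alpha|$ on $H(P_\alpha,x_\alpha)$ is exactly what makes the good term $a_2|P_\alpha|^m$ dominate. If you carry this out with your $\bar u = u - \beta/(2-t)$ and the single $\tau$ furnished by~\cite[Theorem~8.3]{CIL}, you obtain
\begin{equation*}
\tr\!\left( s A(x_\alpha) X_{\ep,\alpha} - A(y_\alpha) Y_{\ep,\alpha} \right) \ \geq\ s H(P_\alpha,x_\alpha) - H(P_\alpha - Q_\alpha,y_\alpha) + (s-1)\tau + \frac{s\beta}{(2-t_\alpha)^2},
\end{equation*}
so the time derivative does \emph{not} cancel; a residual term $(s-1)\tau$ survives. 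Since the Crandall--Ishii lemma provides no a priori lower bound on $\tau$, this term could ruin the estimate. This is precisely where the hypothesis~\eqref{e.c.lip-t} is consumed: it forces $\tau \geq -T_0$ (because $(\tau, P_\alpha - Q_\alpha, Y_{\ep,\alpha}) \in \overline{\mathcal P}^{2,-}u(y_\alpha,t_\alpha)$, so a smooth test function touches $u$ from below at $(y_\alpha,t_\alpha)$ with time derivative $\tau$), giving $(s-1)\tau \geq -(s-1)T_0$ and effectively replacing $M_2$ by $M_2 + T_0$ in the downstream computation. Your proposal instead asserts that $T_0$ enters ``only through the H\"older regularity input provided by Lemma~\ref{T.squad}''---but Lemma~\ref{T.squad} is invoked only when $m>2$. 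In the subquadratic range $1<m\leq 2$ your argument thus has no mechanism to introduce $T_0$ at all, which is inconsistent with the stated constant and with the fact that without a lower bound on $\tau$ the Bernstein contradiction cannot be extracted. The rest of the proposal (the $\beta/(2-t)$ barrier, using $\mathrm{Lip}(u_0)$ to exclude $t_\alpha=0$, and inheriting Steps~3--5 of Theorem~\ref{t.LIP}) matches the paper's route, but the missing $(s-1)\tau$ bookkeeping must be restored.
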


\begin{proof} 

We present the proof in several steps.

{\it Step 1.}
We take $L>0$ and~$(\hat x,\hat t) \in B_1 \times [0,2]$ such that 
\begin{equation}\label{e.C.liploc}
\limsup_{x \to\hat x} \frac{ u(\hat x,\hat t) - u(x,\hat t)}{|\hat x-x|} > L 
\end{equation}
and show that $L>0$ cannot be too large. Choose $\phi :  B_{3/2} \to [1,\infty)$ to be the same cutoff function as in the proof of Theorem~\ref{t.LIP}. Recall that $\phi$ satisfies $\phi \equiv 1$ on $B_1$, $\phi(x) \to +\infty$ as $|x| \to \partial  B_{3/2}$, and  for each $x\in  B_{3/2}$,
\begin{equation*}
\left|D\phi(x) \right| \leq C \left( \phi(x) \right)^\m \quad \mbox{and} \quad \left| D^2\phi(x) \right| \leq C \left( \phi(x) \right)^{2\m -1}.
\end{equation*}
For each $\alpha >0$ sufficiently small, by the same argument in Step 1 of the proof of Theorem \ref{t.LIP}, there exist points $(x_\alpha,t_\alpha), (y_\alpha,t_\alpha)\in  B_{3/2} \times [0,2]$ which satisfy
\begin{multline}\label{e.C.lip.del}
u(x_\alpha,t_\alpha) - u(y_\alpha,t_\alpha) - L\phi(y_\alpha) |x_\alpha-y_\alpha| - \frac1{2\alpha} |x_\alpha-y_\alpha|^2 \\
= \sup_{(x,t),(y,t)\in  B_{3/2}\times [0,2]} \left( u(x,t)-u(y,t)-L\phi(y) |x-y| - \frac1{2\alpha} |x-y|^2\right) > 0.
\end{multline}
Returning to~\eqref{e.C.lip.del} and using the continuity of $u$ and the bound $|x_\alpha-y_\alpha| \leq C\delta^{1/2}$, we get
\begin{multline}\label{e.C.bdd.del}
L\phi(y_\alpha) |x_\alpha-y_\alpha| +  \frac1{2\alpha} |x_\alpha-y_\alpha|^2\\
 \leq \sup \left\{ u(y) - u(z) \, : \, y,z\in B_{3/2},  \ |y-z| \leq C\delta^{1/2}  \right\}  \longrightarrow 0.
\end{multline}
When $m>2$, we actually have a  better estimate, in light of Lemma \ref{T.squad},
\begin{equation}\label{e.C.bdd.more}
L\phi(y_\alpha) |x_\alpha-y_\alpha| +  \frac1{2\alpha} |x_\alpha-y_\alpha|^2 \leq u(x_\alpha,t_\alpha)-u(y_\alpha,t_\alpha) \leq
\widetilde K |x_\alpha-y_\alpha|^\gamma,
\end{equation}
where $\gamma=(m-2)/(m-1)$ and $\widetilde K$ is the constant $K$ given in Lemma \ref{T.squad}. The above implies that
\begin{equation}\label{ad-f-t1}
(\phi(y_\alpha))^{m-1} |x_\alpha-y_\alpha| \leq L^{1-m} \widetilde K^{m-1}.
\end{equation}
An extra difficulty not arising in the stationary case involves the time variable $t_\alpha$. In particular, we need to handle the cases $t_\alpha=0$ and $t_\alpha=2$. By choosing $L>\|Du_0\|_{L^\infty(B_2)}$, we have $t_\alpha>0$, which excludes the first case. For the second case, we add a penalized term as follows. For each $\lambda>0$ sufficiently small,  there exist points $(x_\alpha^\lambda,t_\alpha^\lambda), (y_\alpha^\lambda,t_\alpha^\lambda)\in  B_{3/2} \times (0,2)$ which satisfy
\begin{multline}\label{e.C.del-al}
u(x_\alpha^\lambda,t_\alpha^\lambda) - u(y_\alpha^\lambda,t_\alpha^\lambda) - L\phi(y_\alpha^\lambda) |x_\alpha^\lambda-y_\alpha^\lambda| - \frac1{2\alpha} |x_\alpha^\lambda-y_\alpha^\lambda|^2-\frac{\lambda}{2-t_\alpha^\lambda} \\
= \sup_{(x,t),(y,t)\in  B_{3/2}\times [0,2]} \left( u(x,t)-u(y,t)-L\phi(y) |x-y| - \frac1{2\alpha} |x-y|^2-\frac{\lambda}{2-t}\right) > 0.
\end{multline}
Note that $(x_\alpha^\lambda,y_\alpha^\lambda,t_\alpha^\lambda) \to (x_\alpha,y_\alpha,t_\alpha)$ as $\lambda \to 0$. We often drop the superscripts and write $(x_\alpha,y_\alpha,t_\alpha) = (x_\alpha^\lambda,y_\alpha^\lambda,t_\alpha^\lambda)$  for simplicity, if there is no confusion.

{\it Step 2.}
Applying \cite[Theorem 8.3]{CIL}, we obtain, for each $\ep > 0$ and sufficiently small $\alpha, \lambda > 0$, a number $\tau \in \R$ and symmetric matrices $X_{\ep,\alpha}, Y_{\ep,\alpha}\in \Sy$ such that 
\begin{equation}\label{e.C.mat}
\begin{pmatrix} X_{\ep,\alpha} &0\\ 0&-Y_{\ep,\alpha} \end{pmatrix} \leq J_\alpha + \ep J_\alpha^2, 
\end{equation}
as well as
\begin{multline}\label{e.C.PDE}
\tau+\frac{\lambda}{(2-t_\alpha)^2}-\tr\left( A(x_\alpha) X_{\ep,\alpha} \right) + H\left( \left( L \phi(y_\alpha) + \frac{|x_\alpha-y_\alpha|}{\alpha} \right) \frac{x_\alpha-y_\alpha}{|x_\alpha-y_\alpha|} , x_\alpha \right) \leq 0 \\ 
 \leq \tau -\tr\left( A(y_\alpha) Y_{\ep,\alpha} \right) + H\bigg( \underbrace{\left( L \phi(y_\alpha) + \frac{|x_\alpha-y_\alpha|}{\alpha} \right) \frac{x_\alpha-y_\alpha}{|x_\alpha-y_\alpha|}}_{=:P_\alpha} - \underbrace{L|x_\alpha-y_\alpha| D\phi(y_\alpha)}_{=:Q_\alpha} , y_\alpha \bigg),
\end{multline}
where we have defined
\begin{equation}\label{}
J_\alpha:=\underbrace{\frac{L\phi(y_\alpha)}{|x_\alpha-y_\alpha|} \begin{pmatrix} Z_1&-Z_1\\-Z_1&Z_1 \end{pmatrix} + \frac{1}{\alpha} \begin{pmatrix} I_d&-I_d\\-I_d&I_d \end{pmatrix}}_{=:J_\alpha'} + \underbrace{ L\begin{pmatrix} 0&Z_2\\Z_2&Z_3 \end{pmatrix}}_{=:J_\alpha''}
\end{equation}
and
\begin{multline*}
Z_1:=I_d - \frac{x_\alpha-y_\alpha}{|x_\alpha-y_\alpha|} \otimes \frac{x_\alpha-y_\alpha}{|x_\alpha-y_\alpha|}, \qquad Z_2:=D\phi(y_\alpha)\otimes \frac{x_\alpha-y_\alpha}{|x_\alpha-y_\alpha|},
\\ \mbox{and} \qquad Z_3:=-2Z_2+D^2\phi(y_\alpha)|x_\alpha-y_\alpha|.
\end{multline*}
By \eqref{e.c.lip-t},
\begin{equation}\label{e.C.tau}
\tau \geq -T_0.
\end{equation}
With $s>0$, we multiply both sides of \eqref{e.mats-g} on the right by the matrix
\begin{equation}\label{}
A_s:= \begin{pmatrix} s^2  \sigma(x_\alpha) \sigma^t(x_\alpha) & s \sigma(x_\alpha) \sigma^t(y_\alpha)  \\ s \sigma(y_\alpha) \sigma^t(x_\alpha)  &\sigma(y_\alpha) \sigma^t(y_\alpha) \end{pmatrix} = \begin{pmatrix}  s \sigma(x_\alpha) \\ \sigma(y_\alpha) \end{pmatrix} \begin{pmatrix}  s \sigma(x_\alpha) \\ \sigma(y_\alpha) \end{pmatrix}^t \geq 0
\end{equation}
and then take the trace of the result to obtain
\begin{equation}\label{e.C.trace}
\tr\left( sA(x_\alpha) X_{\ep,\alpha} - A(y_\alpha) Y_{\ep,\alpha} \right) \leq \tr\left( J_\alpha A_s \right) + \ep \tr\left( J_\alpha^2 A_s \right). 
\end{equation}

{\it Step 3.}
We set $s:=1+\beta|x_\alpha-y_\alpha|$, with $\beta>0$  selected below, and obtain the following estimate for $\tr\left( J_\alpha A_s \right)$:
\begin{equation} \label{e.C.trace.up}
\tr\left( J_\alpha A_s\right) \leq 
\Lambda_2^2|P_\alpha| \left( \beta^2+1 \right) |x_\alpha-y_\alpha| + \Lambda_2^2 |Q_\alpha|\left( \beta+1 \right) + \frac{L\Lambda_2^2}{2} |D^2\phi(y_\alpha)| |x_\alpha-y_\alpha|.
\end{equation}
To bound the left side of~\eqref{e.C.trace} from below, we use the inequalities in \eqref{e.C.PDE} with the structural conditions~\eqref{e.Hsubq}, \eqref{e.HsubqLip}, and \eqref{e.HsubqDp}. Following the same computation as in Step 4 of the proof of Theorem \ref{t.LIP}, we could assume that $|Q_\alpha| \leq |P_\alpha|$ provided that
\begin{equation}\label{ad-f-t2}
L \geq C \left (  \left(\frac{\Lambda_2^2}{a_2}\right)^{\frac{1}{m-1}}+\left( \frac{M_2+T_0}{a_2}\right)^{\frac{1}{m}} \right).
\end{equation}
For sufficiently small $\alpha, \lambda$, we get,
\begin{align*} \label{}
\lefteqn{ \tr\left( sA(x_\alpha) X_{\ep,\alpha} - A(y_\alpha) Y_{\ep,\alpha} \right)} \qquad & \\
& \geq s H\left(P_\alpha,x_\alpha\right) - H\left(P_\alpha-Q_\alpha,y_\alpha\right) + (s-1) \tau+s\frac{\lambda}{(2-t_\alpha)^2}  \\
& \geq (s-1) \left( H\left(P_\alpha,x_\alpha \right) +\tau \right) - \Lambda_1 \left( 1+2\left| P_\alpha \right| \right)^{\m-1} \left|Q_\alpha\right| - \left( \Lambda_1\left| P_\alpha \right|^{\m}+M_2 \right) |x_\alpha-y_\alpha| \\ 
& \geq (s-1)\left( a_2\left| P_\alpha \right|^\m  -M_2+\tau \right) - \Lambda_1 \left( 1+2\left| P_\alpha \right| \right)^{\m-1} \left|Q_\alpha\right| - \left( \Lambda_1\left| P_\alpha \right|^{\m}+M_2 \right) |x_\alpha-y_\alpha|,
\end{align*}
where the term $s\lambda (2-t_\alpha)^{-2}$ was ignored because of its sign.
Next we combine the above with~\eqref{e.C.trace} and~\eqref{e.C.trace.up}, inserting $s:=1+\beta|x_\alpha-y_\alpha|$ and sending $\alpha,\ep \to 0$, to obtain
\begin{align*} \label{}
\beta \left( a_2 \left| P_\alpha \right|^\m - M_2+\tau \right) \left|x_\alpha-y_\alpha \right| & \leq \Lambda_1 \left( 1+2\left| P_\alpha \right| \right)^{\m-1} \left|Q_\alpha\right| + \left( \Lambda_1\left| P_\alpha \right|^{\m}+M_2 \right) |x_\alpha-y_\alpha| \\ 
& \quad +\Lambda_2^2 |P_\alpha| \left( \beta^2+1 \right) |x_\alpha-y_\alpha| + \Lambda_2^2|Q_\alpha|\left( \beta +1 \right) \\ 
&\quad+\frac{1}{2} L\Lambda_2^2 |D^2\phi(y_\alpha)|\cdot |x_\alpha-y_\alpha|.
\end{align*}
Repeating the computations in Step 5 in the proof of Theorem \ref{t.LIP}, we obtain finally that
\begin{equation*}
L \leq C \left\{ \left(\frac{(1+\Lambda_1)\Lambda_2^2}{a_2^2}\right)^{\frac{1}{m-1}}+\left(\frac{M_2+T_0}{a_2} \right)^{\frac{1}{m}} \right\}.\qedhere
\end{equation*}
\end{proof}

%

\section{Boundary value problem with state constraints} \label{BVSC}

Motivated by problems in stochastic optimal control, Lasry and Lions~\cite{LL} initiated the study of boundary-value problems for viscous Hamilton-Jacobi equations with state-constraints~25 years ago. The topic has since attracted the attention of many researchers, although most of the studies to date apply only to superquadratic equations or those in which the diffusion matrix~$A$ is isotropic and uniformly elliptic (that is, up to an affine change of variables, the second-order term is the Laplacian). The reason is that (as shown in~\cite{LL}) for a constant, uniformly elliptic diffusion matrix, we can determine, by explicit computation, a precise blow-up rate for solutions near the boundary of the domain-- which then allows for the application of the comparison principle. Meanwhile, the superquadratic case is quite easy to analyze in view of Lemma~\ref{l.superquad}.

Here we present, as far as we are aware, the first well-posedness results for state constrained problems for viscous Hamilton-Jacobi equations with nonisotropic diffusions and subquadratic Hamiltonians. The problem we study has the following form:
\begin{equation}\label{e.state}
\left\{ \begin{aligned}
 & \delta u -\tr\left( A(x) D^2u \right) + H(Du,x) \leq f &  \mbox{in} & \  U,\\
 & \delta u -\tr\left( A(x) D^2u \right) + H(Du,x) \geq f &  \mbox{on} & \ \overline U,
\end{aligned} \right.
\end{equation}
where~$\delta>0$ and $U \subseteq \Rd$ is a given domain.

Let us recall the precise interpretation of~\eqref{e.state}.

\begin{definition}
\label{d.sc}
We say that $u\in \USC(U)$ is \emph{subsolution} of~\eqref{e.state} if $u$ is a viscosity solution of the inequality
\begin{equation*} \label{}
\delta u -\tr\left( A(x) D^2u \right) + H(Du,x) \leq f(x) \quad  \mbox{in}  \  U.
\end{equation*}
We say that $v\in \LSC(\overline U)$ is \emph{supersolution} of~\eqref{e.state} if, for every smooth function $\varphi\in C^2(\overline U )$ and point $x_0 \in \overline U$ such that 
\begin{equation*} \label{}
\min_{ \overline U } (v-\varphi) = (v-\varphi)(x_0),
\end{equation*}
we have
\begin{equation*} \label{}
\delta v(x_0) -\tr\left( A(x_0) D^2\varphi(x_0) \right) + H(D\varphi(x_0),x_0) \geq f(x_0).
\end{equation*}
Of course, we say that $u\in C(U) \cap \LSC(\overline U )$ is a \emph{solution} of~\eqref{e.state} if it is both a subsolution and supersolution~\eqref{e.state}. We remark that we allow functions in $\LSC(\overline U)$ to take values in $\R \cup\{ +\infty\}$. In particular, a solution of~\eqref{e.state} may be $+\infty$ on $\partial U$. 
Note that if $u(x_0)=+\infty$ for some $x_0 \in \partial U$, then $u$ is automatically a supersolution of \eqref{e.state} at $x_0$.
\end{definition}

The following theorem is our main result concerning the state constrained problem~\eqref{e.state}. It may be compared with~\cite[Theorem~I.1]{LL}. See also Barles and Da Lio~\cite{BF} for results on state constraints problems as well as problems with more general boundary conditions.
In case $A\equiv 0$, we refer the readers to Fathi and Siconolfi \cite{FaSi} and Mitake~\cite{Mi}.

\begin{thm}
\label{t.state.q}
Let $U\subseteq \Rd$ be open and connected and $f\in C^{0,1}_{\mathrm{loc}}(U) \cap L^\infty(U)$. 
Define the maximal subsolution of~\eqref{e.state} by
\begin{equation}\label{e.state.q.fmla}
u(x) := \sup\left\{ w(x) \, :\, w\in \USC(\overline U) \ \mbox{is a subsolution of~\eqref{e.state} in $U$} \right\}.
\end{equation}
Then~$u\in C^{0,1}_{\mathrm{loc}}(U) \cap \LSC(\overline U)$ and~$u$ is a solution of~\eqref{e.state}.
\end{thm}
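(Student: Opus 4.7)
I would follow the Perron method adapted to state constraints, using the comparison principle (Theorem~\ref{a.comp.mono}), the interior Lipschitz estimate (Theorem~\ref{t.LIP}), and the convexity lemmas (Lemmas~\ref{l.convex}--\ref{l.convex2}) as the key ingredients. Throughout, $f$ may be absorbed into the Hamiltonian by setting $\tilde H(p,x) := H(p,x) - f(x)$, which preserves the structural hypotheses of Section~\ref{ss.hypos} thanks to $f \in C^{0,1}_{\mathrm{loc}}(U) \cap L^\infty(U)$.

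\emph{Step 1: well-definedness and USC subsolution property.} The family in \eqref{e.state.q.fmla} is nonempty, since any sufficiently negative constant is a subsolution (using $|H(0,\cdot)| \leq \Lambda_1$ from \eqref{e.Hsubq} together with $\|f\|_{L^\infty} < \infty$). To see $u(x_0) < \infty$ at each $x_0 \in U$, I would fix a ball $B_r(x_0) \Subset U$ and construct a smooth supersolution barrier $\psi$ on $B_r(x_0)$ blowing up on $\partial B_r(x_0)$; such $\psi$ exists because $\delta > 0$, $H$ is coercive, and $f$ is bounded. Theorem~\ref{a.comp.mono} applied in $B_r(x_0)$ then yields $w \leq \psi$ there for every admissible $w$, so $u(x_0) \leq \psi(x_0) < \infty$. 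The classical Perron argument next shows that $u^* \in \USC(\overline U)$ is a subsolution of \eqref{e.state} in $U$; by the very definition of $u$, this forces $u \geq u^*$, and hence $u = u^* \in \USC(\overline U)$.

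\emph{Step 2: interior Lipschitz via Dirichlet gluing.} Fix $x_0 \in U$ and $B := B_{2r}(x_0) \Subset U$. The Dirichlet problem on $B$ with boundary data $u|_{\partial B}$ admits a continuous viscosity solution $v$ by Perron combined with Theorem~\ref{a.comp.mono}, the necessary barriers at the smooth boundary $\partial B$ being available since $\delta > 0$ and $H$ is coercive. Comparison gives $u \leq v$ in $B$, so the function $\tilde u$ equal to $v$ on $B$ and to $u$ on $U \setminus B$ lies in $\USC(\overline U)$ and is a subsolution of \eqref{e.state} in $U$: the subsolution test at an interface point $y \in \partial B$ reduces to the same test for $u$, thanks to $\tilde u \geq u$ with equality outside $B$. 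Maximality of $u$ then forces $\tilde u \leq u$, so $v = u$ on $B$; hence $u$ coincides with a continuous viscosity solution on $B$, and Theorem~\ref{t.LIP} delivers the local Lipschitz bound on $B_{r/2}(x_0)$.

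\emph{Step 3: state-constrained supersolution and LSC.} If $u_*$ violates the supersolution condition at some $x_0 \in \overline U$ via a test function $\varphi \in C^2(\overline U)$, then after a normalization we may assume $u_*(x_0) = \varphi(x_0)$, $u_* \geq \varphi$ on $\overline U$, and strict PDE reversal at $x_0$. Then $\varphi_\ep := \varphi + \ep - \eta |x - x_0|^2$ is a classical strict subsolution on a small ball $V$ around $x_0$ (intersected with $\overline U$) for suitable $\ep, \eta > 0$, with $\varphi_\ep(x_0) > u(x_0)$ and $\varphi_\ep \leq u$ near the relative boundary of $V$ in $\overline U$. Gluing $\max(u, \varphi_\ep)$ on $V$ with $u$ elsewhere produces a subsolution in $\USC(\overline U)$ strictly exceeding $u$ at $x_0$, contradicting maximality; since the test functions in Definition~\ref{d.sc} are smooth on all of $\overline U$, the same argument applies uniformly at interior and boundary points. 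Combined with the interior continuity from Step~2, this gives $u \in \LSC(\overline U)$ (with $u = +\infty$ permitted on portions of $\partial U$ where no finite subsolution exists). The central difficulty throughout is the anisotropic and degenerate nature of $A$: unlike in the uniformly elliptic setting of~\cite{LL}, no explicit boundary blow-up analysis is available, so the interface subsolution check in Step~2 and the Perron bump at boundary points in Step~3 rely essentially on the convexity of $H$ (Lemmas~\ref{l.convex}--\ref{l.convex2}) together with the local comparison afforded by the $\delta u$ term in Theorem~\ref{a.comp.mono}.
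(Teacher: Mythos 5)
Your Steps~1 and~3 track the paper reasonably well (local boundedness via a blow-up barrier, Perron bump for the supersolution property), but Step~2 --- the heart of the matter --- has a genuine gap for the subquadratic range $1<\m\leq 2$, which is exactly the case the paper spends most of its effort on.

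The issue is your claim that the Dirichlet problem on $B:=B_{2r}(x_0)\Subset U$ with boundary data $u|_{\partial B}$ ``admits a continuous viscosity solution $v$ \dots the necessary barriers at the smooth boundary $\partial B$ being available since $\delta>0$ and $H$ is coercive.'' For $1<\m\leq 2$ and a degenerate $A$ this fails in two ways. First, $u|_{\partial B}$ is only upper semicontinuous at this stage, so there is no lower barrier forcing $v_*\geq u$ on $\partial B$, and the comparison $v^*\leq v_*$ cannot be invoked; one cannot conclude $v^*=v_*$ and hence cannot conclude $v$ is continuous. Second, even with continuous data the lower (subsolution) barrier at a boundary point $y_0\in\partial B$ is not available: a linear barrier $\psi^-=g(y_0)-\ep-K(x-y_0)\cdot\nu$ is \emph{not} a subsolution because $H(D\psi^-,\cdot)\geq a_R K^\m -M_R$ is large and positive, and a power barrier $\psi^-=g(y_0)-\ep-K\,\dist(x,\partial B)^\alpha$ requires $-\tr(A\,D^2\psi^-)$ to dominate the Hamiltonian, which forces $A$ to be nondegenerate in the normal direction near $\partial B$. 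With a general degenerate $A$ this is exactly what cannot be assumed, and loss of boundary conditions for the Dirichlet problem is a real phenomenon here (it already occurs for first-order coercive HJ equations). So the gluing argument does not give a continuous function $v$ with $v=u$ on $B$, and the Lipschitz estimate of Theorem~\ref{t.LIP} (which requires a \emph{continuous} solution) cannot be applied.

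The paper's mechanism is quite different and this is the step you are missing. Instead of trying to localize to an interior ball, the proof introduces the minimal supersolution $\widehat u$ and establishes \emph{matching} blow-up rates for $u$ and $\widehat u$ near $\partial U$, namely $c\zeta_U - C\leq u,\widehat u \leq C\zeta_U$ (using the barriers of Lemmas~\ref{l.dvdobs} and~\ref{l.dvdpsiops}, which require perturbing $f$ to $f^\ep=f+\ep(\db_U)^{-\m/(\m-1)}$ so that a subsolution blow-up can be forced even where $A$ degenerates). Then the convexity device, Lemma~\ref{l.convex2}, is applied: for small $\ep>0$ the function $w:=(1+\ep)\widehat u-\ep u$ is a supersolution, and the matching blow-up rates show $w\geq\tfrac12 c\zeta_U$ near $\partial U$, hence $w$ is bounded below, hence $\widehat u\leq w$, which rearranges to $u\leq\widehat u$. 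Combined with $\widehat u\leq u_*\leq u^*=u$ this yields continuity. The key ideas you have omitted are precisely (i) the matching boundary blow-up analysis, (ii) the $f^\ep$ perturbation to make that analysis possible for general $f$, and (iii) the $(1+\ep)\widehat u-\ep u$ comparison from Lemma~\ref{l.convex2}; you mention Lemmas~\ref{l.convex}--\ref{l.convex2} at the end, but they are not actually used anywhere in your gluing construction. (For $\m>2$, Lemma~\ref{l.superquad} gives H\"older equicontinuity of the admissible family directly and your approach --- and the paper's --- become straightforward; the gap is specific to $1<\m\leq 2$.)
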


The previous result states in particular that~\eqref{e.state} has a unique maximal solution. In certain cases (e.g., if $U=\Rd$ and $H$ is uniformly coercive, or $\partial U$ is smooth and $A$ is well-behaved) we can prove a complete well-posedness result, showing that~\eqref{e.state.q.fmla} is the unique bounded-below solution of~\eqref{e.sc.bvp}. In general, we do not know how to prove such a uniqueness result. However, obtaining such a uniqueness statement is in many situations secondary to simply showing that the function in~\eqref{e.state.q.fmla} is continuous-- which is already a kind of well-posedness result. Indeed, in optimal control theory it can often be shown that $u$ is the value function of the problem by maximality (and therefore the primary object of interest).

\medskip

We continue by introducing notations and making some preliminary observations. Essentially all the difficulty in proving Theorem~\ref{t.state.q} lies in is handing the case that~$1 < m \leq 2$ and~$U$ is bounded and smooth. For this discussion, we proceed under these assumptions. For~$\ep>0$, we denote
\begin{equation*}\label{}
U_\ep:= \left\{ x\in U \,:\, \dist(x,\partial U) > \ep \right\} \qquad \mbox{and} \qquad
U^\ep:= \left\{ x\in \Rd \,:\, \dist(x,U) < \ep\right\}.
\end{equation*}
Since~$\partial U$ is smooth, we may select a nonnegative function~$\db_U \in C^2( \overline U )$ such that~$0<\db_U \leq 1$ in~$U$ and, for~$0<\ep_0<1$ depending on the geometry of~$U$, we have $\db_U \geq \ep_0$ in $\overline U_{\ep_0}$ and
\begin{equation*}\label{}
\db_U (x) = \dist(x,\partial U) := \inf_{y\in \partial U} |x-y| \qquad \mbox{for every} \ x\in \overline U \setminus U_{\ep_0}.
\end{equation*}
Note that this implies that $|D\db_U| \equiv 1$ in $U \setminus\overline U_{\ep_0}$. Following~\cite{LL}, we introduce
\begin{equation}\label{e.sc.phiU}
\zeta_U(x):= \begin{cases} 
\db_U(x)^{\frac{\m-2}{\m-1}} & 1< \m <2, \\
 1 - \log\left( \db_U(x)\right)  & \m = 2.
\end{cases}
\end{equation}
If the underlying set $U$ can be inferred from the context, we simply write $\db := \db_U$ and $\zeta:=\zeta_U$. Note that $\zeta_U > 1$ in $U$ and $\zeta_U \leq C(m,\ep_0)$ in $\overline U_{\ep_0}$. 

The utility of the test function $\zeta_U$ is due to the scaling of the equation: by a routine computation, we have 
\begin{equation}\label{e.zetaUpro}
| D\zeta_U(x)|^\m  \simeq (\db_U(x))^{-\frac\m{\m-1}} \simeq |D^2\zeta_U(x)|\quad \mbox{in} \ U \setminus U_{\ep_0},
\end{equation}
where the constant of proportionality implicit in the second relation depends on an upper bound for the curvature of $\partial U$. Due to the superlinearity of the gradient term, this means that, for large $C>0$, the function $C\zeta_U$ is a supersolution of~\eqref{e.state} which blows up near $\partial U$, while $c\zeta_U$ will be a subsolution of~\eqref{e.state} for small $c>0$ provided that the diffusion is nondegenerate near $\partial U$ or $f \gtrsim (\db_U(x))^{-\frac\m{\m-1}}$ near $\partial U$. The precise formulation is contained in the following two lemmas.

\begin{lem}\label{l.dvdobs}
Assume that $1< m\leq 2$ and $U$ is a smooth bounded domain. Suppose also that $\delta, a>0$ and $\Lambda,M, \eta \geq 0$ and $u\in \USC(U)$ satisfy
\begin{equation*} \label{}
\delta u - \Lambda^2\left| D^2u \right| + a|Du|^m  \leq M + \eta (\db_U(x))^{-\frac\m{\m-1}}  \quad  \mbox{in}  \  U.
\end{equation*}
Then
\begin{equation*} \label{}
u \leq \frac{M}{\delta} + K \zeta_U \quad \mbox{in} \ U,
\end{equation*}
where $K>0$ denotes, for some $C>0$ depending on $d$, $m$ and the geometry of $U$,
\begin{equation*} \label{}
K : = C \left(  \left( \frac{\Lambda^2}{a} \right)^{\frac1{\m-1}} + \left( \frac{\eta}{a} \right)^{\frac1{\m}} \right).
\end{equation*}
\end{lem}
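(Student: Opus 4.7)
The plan is to build $w := M/\delta + K\zeta_U$ as a smooth classical supersolution of the differential inequality satisfied by $u$, and then conclude $u \leq w$ by a state-constrained comparison argument in which the blow-up of $\zeta_U$ at $\partial U$ plays the role of a boundary condition. Because $w$ is smooth and $u \in \USC(U)$, the comparison reduces to testing the viscosity subsolution inequality for $u$ against $w$ at an interior maximizer of $u-w$.

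The main task is to make the scaling relation~\eqref{e.zetaUpro} quantitative and then balance constants. A direct differentiation of $\zeta_U$, using $|D\db_U| \equiv 1$ on $U \setminus \overline{U_{\ep_0}}$ together with the $C^2$ bounds on $\db_U$, produces constants $c_1, C_1 > 0$ depending only on $\m$ and the geometry of $U$ such that
\[
|D\zeta_U(x)|^\m \geq c_1\, \db_U(x)^{-\m/(\m-1)} \quad \mbox{and} \quad |D^2\zeta_U(x)| \leq C_1\, \db_U(x)^{-\m/(\m-1)} \quad \mbox{in } U \setminus \overline{U_{\ep_0}}.
\]
Substituting $w = M/\delta + K\zeta_U$ into $\delta w - \Lambda^2|D^2 w| + a|Dw|^\m$ and discarding the nonnegative term $\delta K \zeta_U$, the lower bound in the annulus becomes
\[
\delta w - \Lambda^2|D^2 w| + a|Dw|^\m \geq M + \bigl(a K^\m c_1 - K \Lambda^2 C_1\bigr) \db_U(x)^{-\m/(\m-1)}.
\]
Requiring $a K^\m c_1 \geq 2 K \Lambda^2 C_1$ forces $K \gtrsim (\Lambda^2/a)^{1/(\m-1)}$, while $a K^\m c_1 \geq 2\eta$ forces $K \gtrsim (\eta/a)^{1/\m}$; taking $K$ as a sufficiently large multiple of their sum produces the supersolution inequality on the annulus and is exactly the form claimed. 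In $\overline{U_{\ep_0}}$ all the quantities $\zeta_U$, $|D\zeta_U|$, $|D^2 \zeta_U|$, and $\db_U^{-\m/(\m-1)}$ are bounded by geometric constants, so enlarging $C$ (absorbing these constants into the dependence on $m$ and $U$) extends the supersolution inequality to all of $U$.

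For the comparison step, suppose for contradiction that $u(x_*) > w(x_*)$ at some $x_* \in U$. Since $w(x) \to +\infty$ as $x \to \partial U$ and $u$ is USC, hence bounded above on every compact subset of $U$, the supremum $\sup_U (u-w)$ is attained at an interior point $x_0 \in U$. Using $w$ as a smooth test function for $u$ at $x_0$ and combining with $u(x_0) > w(x_0)$ and the supersolution inequality for $w$ at $x_0$ yields the desired contradiction. The main obstacle is \emph{verifying that $\sup_U(u-w)$ is actually attained}: the hypothesis $u \in \USC(U)$ does not a priori preclude $u$ from blowing up faster than $\zeta_U$ at $\partial U$. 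Two standard remedies work. First, replace $w$ by $w_\lambda := M/\delta + (K+\lambda)\zeta_U$ with $\lambda > 0$, derive the contradiction from a strict comparison, and send $\lambda \to 0^+$. Alternatively, run a Crandall--Ishii doubling argument in the spirit of the proof of Theorem~\ref{a.comp.mono}: the quadratic penalty $|x-y|^2/(2\ep)$ combined with the blow-up of $\zeta_U$ confines the maximizing pair $(x_\ep, y_\ep)$ to a compact subset of $U \times U$, after which sending $\ep \to 0$ and using the sub/supersolution inequalities at $(x_\ep, y_\ep)$ produces the contradiction.
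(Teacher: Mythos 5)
Your overall construction is the right one, and it matches the paper's: build $w := M/\delta + K\zeta_U$ as a smooth classical supersolution of the comparison operator $\delta u - \Lambda^2|D^2 u| + a|Du|^m$, and then use it as a test function at a maximizer of $u - w$. The quantitative version of~\eqref{e.zetaUpro} and the balancing of $K$ against $c_1$, $C_1$, $\Lambda^2$, $\eta$ are correct and produce exactly the claimed constant. You also correctly flag the genuine technical obstacle: with only $u \in \USC(U)$, nothing prevents $u$ from blowing up faster than $\zeta_U$ near $\partial U$, so $\sup_U(u - w)$ need not be attained.

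However, neither of your two proposed remedies actually closes this gap. Increasing the multiplier to $(K+\lambda)\zeta_U$ does not help: if $u$ blows up strictly faster than $\zeta_U$ at $\partial U$, then $u - (K+\lambda)\zeta_U$ still fails to attain its supremum for every $\lambda > 0$, so there is no strict comparison to run and nothing to send $\lambda \to 0$ in. The Crandall--Ishii doubling suffers the same defect: the penalty $|x-y|^2/(2\ep)$ bounds $|x_\ep - y_\ep|$ only if the penalized supremum is finite, which again fails if $u$ outruns $\zeta_U$; and in any case doubling is unnecessary here because $w$ is $C^2$, so one may test $u$'s subsolution property directly against $w$. The paper's actual fix is a different one: replace $\zeta_U$ by $\zeta_{U_\ep}$, the barrier built from the \emph{shrunk} domain $U_\ep := \{x : \dist(x,\partial U) > \ep\}$. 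This function blows up on $\partial U_\ep$, which is a compact subset of $U$, while $u$ is automatically bounded above on the compact set $\overline{U_\ep} \subset U$. Hence $u - K\zeta_{U_\ep}$ is bounded above and USC on $\overline{U_\ep}$ and attains its supremum at an interior point $x_0 \in U_\ep$, and the supersolution computation for $K\zeta_{U_\ep}$ at $x_0$ gives the contradiction with constants uniform in $\ep$. One then sends $\ep \to 0$, using $\zeta_{U_\ep} \to \zeta_U$ pointwise, to recover the stated estimate. That one-line domain approximation is the missing ingredient; the rest of your argument is sound.
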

\begin{proof}
By replacing $u$ by $u - M/\delta$, it suffices to consider the case $M=0$. It also suffices to prove the estimate with $\zeta_{U_\ep}$ in place of $\zeta_{U}$, by continuity. Since $u$ is bounded above on $\overline U_\ep$ and $\zeta_{U_\ep} \rightarrow +\infty$ as $x \to \partial U_{\ep}$, for each $K>0$, the function $x\mapsto u(x) - K \zeta_{U_\ep}(x)$ must attain its supremum over $U_{\ep}$ at some point $x_0 \in U_{\ep}$. It suffices to  show that $u(x_0) < K \zeta_{U_\ep}(x_0)$ for $K>0$ as in the statement of the lemma. Suppose on the contrary that $u(x_0) \geq K \zeta_{U_\ep}(x_0) \geq 0$. Then by the definition of viscosity subsolution, we have 
\begin{align*} \label{}
\eta (\db_U(x_0))^{-\frac\m{\m-1}} & \geq \delta u(x_0) - \Lambda^2K \left| D^2\zeta_{U_{\ep}}(x_0) \right| + aK^m\left|D\zeta_{U_{\ep}}(x_0)\right|^m \\
 & \geq -\Lambda^2 K \left| D^2\zeta_{U_{\ep}}(x_0) \right| + aK^m \left|D\zeta_{U_{\ep}}(x_0)\right|^m.
\end{align*}
We may now make $K>0$ large, using~\eqref{e.zetaUpro}, to obtain a contradiction. We find that we need to take $K$ as in the statement of the lemma with $C>0$ large enough, depending on $d$, $m$ and the geometry of $U$.
\end{proof}

\begin{lem}\label{l.dvdpsiops}
Assume that $1< m\leq 2$ and $U$ is a smooth bounded domain. 
Fix $\eta, a>0$ and $\Lambda\geq 0$. Then the function $v:= k\zeta_U - M/\delta$ is a smooth solution of
\begin{equation*} \label{}
\delta v + \Lambda^2\left| D^2v \right| + a |Dv|^m  \leq -M + \eta (\db_U(x))^{-\frac\m{\m-1}} \quad  \mbox{in}  \  U,
\end{equation*}
provided that, for $c>0$ depending on $d$, $m$ and the geometry of $U$, 
\begin{equation*} \label{}
0< k \leq c \left(  \left( \frac{\Lambda^2}{a} \right)^{\frac1{\m-1}} + \left( \frac{\eta}{a} \right)^{\frac1{\m}} \right).
\end{equation*}
\end{lem}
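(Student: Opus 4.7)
My plan is a direct pointwise verification, essentially the mirror image of the contradiction argument used in Lemma~\ref{l.dvdobs}. Since $v := k\zeta_U - M/\delta$ is smooth in $U$, I first differentiate to get $Dv = kD\zeta_U$ and $D^2v = kD^2\zeta_U$, and note that $\delta v = \delta k\zeta_U - M$. Substituting into the claimed inequality cancels the constants $-M$ on both sides, so the task reduces to checking the pointwise bound
\begin{equation*}
\delta k\,\zeta_U(x) + \Lambda^2 k\,\left|D^2\zeta_U(x)\right| + ak^m\,\left|D\zeta_U(x)\right|^m \leq \eta\,(\db_U(x))^{-m/(m-1)} \quad \mbox{in } U.
\end{equation*}

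The central tool will be the scaling of $\zeta_U$ encoded in~\eqref{e.zetaUpro}. For some $C_0$ depending on $m$ and the geometry of $U$, both $|D\zeta_U|^m$ and $|D^2\zeta_U|$ are bounded above by $C_0(\db_U)^{-m/(m-1)}$ in the boundary strip $U \setminus \overline{U}_{\ep_0}$, while on $\overline{U}_{\ep_0}$ the quantities $\zeta_U,\,|D\zeta_U|,\,|D^2\zeta_U|$ are each bounded by constants depending on $\ep_0$ and $(\db_U)^{-m/(m-1)}\geq 1$ there, so the interior contribution is straightforwardly absorbed. A direct comparison of exponents in~\eqref{e.sc.phiU}, combined with $\db_U \leq 1$, furthermore gives $\zeta_U(x) \leq C_0(\db_U(x))^{-m/(m-1)}$ throughout $U$ (in fact with a spare factor of $\db_U^{2}$, so this term is genuinely lower-order near $\partial U$). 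Putting these estimates together yields a uniform pointwise bound on the left-hand side above by $C_1(\delta k + \Lambda^2 k + ak^m)(\db_U)^{-m/(m-1)}$ on $U$.

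The proof will then conclude by arranging the scalar inequality $C_1(\delta k + \Lambda^2 k + ak^m) \leq \eta$. The $\delta k$ contribution is lower-order and gets absorbed into the constant $c$. For the two principal terms, the summand $(\eta/a)^{1/m}$ of the bound on $k$ directly controls $ak^m\leq c^m\eta$, while the complementary summand $(\Lambda^2/a)^{1/(m-1)}$ is precisely the break-even scale at which $\Lambda^2 k$ matches $ak^m$, and a Young-type split exploiting this break-even will control the linear term. The hard part --- to the extent there is one --- is only the algebraic bookkeeping of constants through the interior/boundary decomposition and the two regimes; no viscosity-solution subtlety appears, since $v$ is smooth and the verification is entirely pointwise.
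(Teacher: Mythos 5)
Your overall strategy — differentiate the smooth function $v$, cancel the constant $M$ on both sides, then verify the resulting pointwise inequality using the scaling relations~\eqref{e.zetaUpro} — is exactly what the paper's terse ``easy exercise using~\eqref{e.zetaUpro}'' is pointing at, and the interior/boundary decomposition you sketch is the right framework. But the final scalar step, as you describe it, does not go through, and the sticking point is not mere bookkeeping.

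After your reduction you must show that
\[
C_1\big(\delta k + \Lambda^2 k + a k^m\big) \leq \eta
\]
under the hypothesis $k \leq c\big((\Lambda^2/a)^{1/(m-1)} + (\eta/a)^{1/m}\big)$. This fails when the first summand dominates. Your assertion that ``the summand $(\eta/a)^{1/m}$ directly controls $ak^m \leq c^m\eta$'' is false as written: the hypothesis permits $k$ as large as $c(\Lambda^2/a)^{1/(m-1)}$, and at that value $a k^m \approx c^m (\Lambda^2)^{m/(m-1)}/a^{1/(m-1)}$, which has no reason to be bounded by $\eta$. The ``Young-type split exploiting the break-even'' fares no better: Young's inequality turns $\Lambda^2 k$ into $\ep\, a k^m + C_\ep (\Lambda^2)^{m/(m-1)}/a^{1/(m-1)}$, and the second piece is exactly the uncontrolled quantity. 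The break-even $k_* = (\Lambda^2/a)^{1/(m-1)}$ is where the two principal terms \emph{equalize}; below $k_*$ the linear term $\Lambda^2 k$ is the \emph{larger} one, so taking $k$ up to $c\, k_*$ is not a smallness condition on the linear term at all. Finally, the $\delta k\,\zeta_U$ term is not uniformly lower-order: near $\partial U$ it carries a spare factor $\db_U^2$, as you note, but in the interior where $\db_U \approx 1$ both $\zeta_U$ and $\db_U^{-m/(m-1)}$ are $\approx 1$, so one needs $\delta k \lesssim \eta$, a constraint that cannot be ``absorbed into $c$'' when $c$ is required to depend only on $d$, $m$ and $U$.

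What is really happening is that the smallness condition on $k$ in the lemma statement has the wrong shape: for a \emph{subsolution} one needs a threshold of the form $k \leq c\,\min\bigl\{\eta/\delta,\ \eta/\Lambda^2,\ (\eta/a)^{1/m}\bigr\}$ (so that $k\to 0$ as $\eta\to 0$), not a sum like the upper bound $K$ in Lemma~\ref{l.dvdobs}, which should be \emph{large}. Your proposal takes the stated bound at face value and manufactures a cancellation that isn't there. The constructive move would have been to observe the mismatch, derive the correct threshold, and note that all uses of the lemma in the paper (Steps 1, 2 and 5 of the proof of Theorem~\ref{t.state.q}) only require the \emph{existence} of some $k>0$ making $v$ a subsolution, so the precise constant is inconsequential for the downstream argument.
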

\begin{proof}
We may assume $M=0$. The proof is then an easy exercise using~\eqref{e.zetaUpro}. 
\end{proof}

\begin{remark}\label{e.bounddvd}
Observe that Lemma~\ref{l.dvdobs} yields local upper bounds for solutions $u\in \USC(B_2)$ of the inequality
\begin{equation*} \label{}
\delta u -\tr\left( A(x) D^2u \right) + H(Du,x) \leq 0. 
\end{equation*}
We have, for $C>0$ depending only on $d$ and $m$,
\begin{equation*} \label{}
\sup_{B_1} u \leq \frac{M_2}{\delta} + C \left( \frac{\Lambda_2^2}{a_2} \right)^{\frac1{\m-1}}.
\end{equation*}
\end{remark}

The proof of Theorem~\ref{t.state.q} in the case $\m>2$ is relatively easy, and we postpone it and concentrate first on the more difficult case that $1<\m\leq 2$.

\begin{proof}[{\bf Proof of Theorem~\ref{t.state.q} in the subquadratic case, $1<\m\leq 2$}]

We take $u$ to be defined by \eqref{e.state.q.fmla}. We first prove the result under the additional hypothesis that $U$ is bounded and smooth and, rather than assuming $f$ to be bounded, we take $f$ to be bounded below but satisfy the growth condition
\begin{equation}\label{e.state.fcoer}
f(x) \geq c (\db_U(x))^{-\frac{\m}{\m-1}} \quad \mbox{for all} \ x\in U.
\end{equation}
These assumptions are removed in the final step of the argument. The strategy in the case of~\eqref{e.state.fcoer} is to consider the function
\begin{equation}\label{def.hatmmuU}
\widehat{u}(x):= \inf\left\{ w(x) \, :\, w\in \LSC(\overline U) \ \mbox{is a supersolution of~\eqref{e.state} and}  \  \ \textstyle{\inf_U w} > -\infty \right\}
\end{equation}
and to argue that $u^* \leq \widehat{u}$ and $\widehat{u} \leq u_*$ in $U$, which of course imply that $u = \widehat{u}=u_*=u^*$ and hence $u$ is continuous. The reason that~\eqref{e.state.fcoer} is helpful is because it allows us to show, using Lemma~\ref{l.dvdobs} and~\ref{l.dvdpsiops}, that $u / \widehat u$ stays bounded in $U$. This allows us to implement a comparison argument based on Lemma~\ref{l.convex2}.

{\it Step 1.} We show that $u$ is well-defined, bounded below, locally bounded above and obtain a precise blow up rate near $\partial U$. First we observe that $u \geq -\delta^{-1}\Lambda_1$,
since the right side of this inequality is a (constant) subsolution of~\eqref{e.state}. Moreover, it follows from~Lemma~\ref{l.dvdpsiops} that~$u\geq c \zeta_U - C$ in $U$ for some positive constants~$c,C>0$. To obtain an upper bound, we observe that, following the same calculation as in the proof of Lemma~\ref{l.dvdobs}, using the assumed smoothness of $U$, there exists~$\gamma_0, C> 0$ such that for every~$0<\gamma<\gamma_0$, the function~$C \zeta_{U_\gamma}$ is a strict supersolution of~\eqref{e.state} in~$U_\gamma$. Fix $w\in\USC(U)$ in the admissible class in the definition~\eqref{e.state.q.fmla} of $u$. Since $\zeta_{U_\gamma}$ blows up on $\partial U_\gamma$, there exists $x_0\in U_\gamma$ such that $\sup_{U_\gamma}  (w-C\zeta_{U_\gamma}) = (w-C\zeta_{U_\gamma})(x_0)$. At $x_0$, we find that
\begin{multline*}\label{}
\delta C \zeta_{U_\gamma}(x_0)  -\tr\left( A(x_0) D^2(C\zeta_{U_\gamma})(x_0) \right) + H(D(C\zeta_{U_\gamma})(x_0),x_0) >  f(x_0) \\ \geq \delta w(x_0) -\tr\left( A(x_0) D^2(C\zeta_{U_\gamma})(x_0) \right) + H(D(C\zeta_{U_\gamma})(x_0),x_0) .
\end{multline*}
Rearranging this gives $(w-C\zeta_{U_\gamma})(x_0) < 0$, and thus $w\leq C\zeta_{U_\gamma}$ in $U_\gamma$. Sending $\gamma \to 0$ yields that $w \leq C\zeta_U$ in $U$. This holds for all $w$ inside the supremum on the right side of~\eqref{e.state.q.fmla}, and thus $u\leq C\zeta_U$ in $U$. 

We have shown that
\begin{equation}
\label{e.state.q.gr1}
c \zeta_U - C\leq u  \leq C \zeta_U \quad \mbox{in} \  U.
\end{equation}

{\it Step 2.} We establish an estimates on the blow-up rate of $\widehat{u}$ near $\partial U$. We claim that there exists $c>0$ such that
\begin{equation}
\label{e.state.q.gr2}
\widehat u \geq c \zeta_U -C \quad \mbox{in} \ U.
\end{equation}
By Lemma~\ref{l.dvdpsiops}, for sufficiently small $c>0$ and $\gamma>0$, the function $c\zeta_{U^\gamma}$ is a strict subsolution of~\eqref{e.state} in $U$. Since $\zeta_{U^\gamma}$ is bounded and smooth on~$\overline U$, if $w \in \LSC(\overline U)$ is any function in the admissible class for $\widehat u$, then $w - c\zeta_{U^\gamma}$ must achieve its infimum over $\overline U$ at some point $x_0\in \overline U$. Since $c\zeta_{U^\gamma}-C$ is a strict subsolution and $\widehat u$ is a supersolution of~\eqref{e.state} in $U$, we deduce that 
\begin{multline*}\label{}
\delta (c \zeta_{U^\gamma}(x_0) - C)  -\tr\left( A(x_0) D^2(c\zeta_{U^\gamma})(x_0) \right) + H(D(c\zeta_{U^\gamma})(x_0),x_0) <  f(x_0) \\ \leq \delta w(x_0) -\tr\left( A(x_0) D^2(c\zeta_{U^\gamma})(x_0) \right) + H(D(c\zeta_{U^\gamma})(x_0),x_0) .
\end{multline*}
We deduce that $c\zeta_{U^\gamma}(x_0) - C \leq w(x_0)$. Since $x_0$ is the minimum point of $w - c\zeta_{U^\ep}(x_0)$, we obtain that~$c\zeta_{U^\gamma}(x_0) - C \leq w(x_0)$ in~$U$. Since this holds for all such $w$, we obtain~\eqref{e.state.q.gr2}.

{\it Step 3.} We argue that~$u^*$ is subsolution of~\eqref{e.state} and~$u_*$ is supersolution of~\eqref{e.state}. The first claim is immediate from the definition of $u$ as a supremum of a family of subsolutions and the fact that it is locally bounded in~$U$. This implies in particular that $u^*=u$, so $u \in \USC(U)$. The proof that~$u_*$ is a supersolution of~\eqref{e.state} in $\overline U$ follows the usual Perron method. We give the argument for completeness. Select a smooth function $\phi \in C^{2}(\overline U)$ and a point $x_0\in \overline U$ such that 
\begin{equation}\label{e.state.q.touch}
u_* - \phi \quad \mbox{has a strict local minimum at} \ x_0. 
\end{equation}
We must show that 
\begin{equation}\label{e.state.q.wts}
\delta u_*(x_0) -\tr(A(x_0)D^2\phi(x_0)) + H(D\phi(x_0),x_0) \geq f(x_0). 
\end{equation}
Assuming on the contrary that~\eqref{e.state.q.wts} is false, we use the smoothness of $\phi$ and the definition of $u_*$ to find $r,\theta> 0$ such that 
\begin{equation*}\label{}
\delta u_*(x) -\tr(A(x)D^2\phi(x)) + H(D\phi(x),x) \leq \mu - \theta \quad \mbox{in} \ \overline U \cap B_{r}(x_0).
\end{equation*}
By adding a constant to $\phi$ and shrinking $r>0$, if necessary, we may assume by~\eqref{e.sc.supq.touch} that 
\begin{equation*}\label{}
u_*(x_0) - \phi(x_0) < 0 < u_* - \phi \quad \mbox{on} \ \overline U \cap \partial B_r(x_0).
\end{equation*}
Define the function
\begin{equation*}\label{}
w(x):= \begin{cases}
u(x) & \mbox{in} \ \overline U \setminus  \overline B_r(x_0), \\
\max\{ u(x) , \phi(x) \} & \mbox{in} \ \overline U \cap \overline B_r(x_0).
\end{cases}
\end{equation*}
It is clear from construction that $w$ is a subsolution of~\eqref{e.state} and hence an admissible function in the definition of $u$. Thus $w\leq u$ in $U$. This contradicts the fact that $w(x_0) = \phi(x_0) > u_*(x_0)$ and completes the proof that $u_*$ is a supersolution of~\eqref{e.state} on $\overline U$.

As a consequence of the fact that $u_*$ is a supersolution which is bounded below in $U$, it follows from the definition of~$\widehat u$ that $\widehat{u} \leq u_*$.

{\it Step 4.} We complete the argument under the extra assumption of~\eqref{e.state.fcoer}. According to Lemma~\ref{l.convex2}, for every $\ep > 0$, the function $w:= (1+\ep) \widehat{u} - \ep u$ is a supersolution of~\eqref{e.sc} in $U$. Moreover, by \eqref{e.state.q.gr1} and~\eqref{e.state.q.gr2}, for sufficiently small $\ep > 0$, the function $w$ satisfies $w \geq \frac12 c \zeta_U$ near $\partial U$. In particular, $w$ is bounded below. Therefore, we conclude from the definition of $\widehat{u}$ that $\widehat{u} \leq w$. A rearrangement of this inequality gives $u \leq \widehat u$. Hence $u = \widehat u = u_* = u^*$ and $u$ is continuous in $U$. Since $u\in C(U)$, the Lipschitz estimates from Theorem~\ref{t.LIP} apply and yield that $u \in C^{0,1}_{\mathrm{loc}}(U)$. The fact that~$u$ is the unique bounded-below solution of~\eqref{e.state} is clear from the definitions of $\widehat u$ and $u$ and the fact that $\widehat u = u$.

{\it Step 5.} We present the argument for general bounded $f\in C^{0,1}_{\mathrm{loc}}(U)$. Just as above, we take $u$ to be defined by ~\eqref{e.state.q.fmla} and we show that $u \in C(U)$. We consider the function
\begin{equation*}\label{}
f^\ep (x) : = f(x) + \ep (\db_U(x))^{-\frac\m{\m-1}},
\end{equation*}
and let $u^\ep$ denote the corresponding maximal subsolution with~$f^\ep$ in place of~$f$. It is clear by the obvious monotonicity with respect to $f^\ep$ of the maximal subsolutions that $u \leq u^\ep \leq u^{\ep'}$ in $U$ provided $0< \ep<\ep'\leq 1$. By what we have shown above in Step~1--4, since $f^\ep$ satisfies~\eqref{e.state.fcoer}, we have that~$u^\ep \in C^{0,1}_{\mathrm{loc}}(U)$. Moreover, Theorem~\ref{t.LIP} yields that $\{ u^\ep \}_{0<\ep\leq 1}$ is uniformly Lipschitz in each compact subset of~$U$. Since $u^\ep$ is uniformly from bounded from below $-\delta^{-1}\Lambda_1$ it follows that $u^\ep$ converges locally uniformly in $U$ to a function $v$ as $\ep \to 0$, and $v\in C^{0,1}_{\mathrm{loc}}(U)$. Furthermore, $v \geq u$, by monotonicity, and $v$ is a subsolution of \eqref{e.state}. By the definition of $u$, we have $u \geq v$. Thus $u=v$.

{\it Step 6.} In this final step, we remove the assumption that $U$ is bounded and smooth. We consider instead an increasing  sequence $\{U_k\}_{k\in \N}$ of smooth, bounded domains such that $\cup_{k\in\N} U_k = U$. Denote by $u_k \in C(U_k)$ the corresponding  solution of~\eqref{e.state} in $U_k$, and observe that $\{ u_k \}$ is monotone decreasing, by definition, and equi-Lipschitz in each compact subset of $U$ by what we have shown above. It follows that $u_k \rightarrow v $ locally uniformly for some $v\in C^{0,1}_{\mathrm{loc}}(U)$. We find that $v=u$ by arguing as in Step 5 above. 
\end{proof}

\begin{proof}[{\bf Proof of Theorem~\ref{t.state.q} in the superquadratic case, $\m > 2$}]
According to Remark~\ref{e.bounddvd}, $u$ is locally bounded in $U$. According to Lemma~\ref{l.superquad}, the family of subsolutions in the admissible class in the definition of $u$ is locally equi-continuous in $U$. It follows that $u \in C(U)$ and thus, by Theorem~\ref{t.LIP}, that $u \in C^{0,1}_{\mathrm{loc}}(U)$. The proof that $u$ is a solution of~\eqref{e.state} follows along the lines of Step~3 in the previous argument. 
\end{proof}

\section{The metric problem}
\label{s.mp}

In this section we study the \emph{maximal subsolution} of the equation
\begin{equation} \label{e.sc}
-\tr\left( A(x) D^2u \right) + H(Du,x) = \mu \quad \text{in} \ U \setminus  \overline B_1,
\end{equation}
subject to the constraint $u \leq 0$ on $\overline B_1$.
 Here $U$ is an open subset of $\Rd$ such that 
\begin{equation}\label{e.sc.U}
\overline B_1\subseteq U, \quad U \ \ \mbox{and} \ \ U\setminus \overline B_1 \ \  \mbox{are connected.}
\end{equation}
The maximal subsolution is defined, for all $x\in U$, by
\begin{equation} \label{e.scsol}
m_\mu^U(x):= \sup\left\{ w(x) \,:\, w\in \USC( U ) \ \mbox{is a subsolution of~\eqref{e.sc} and} \ w \leq 0 \ \mbox{on} \ \overline B_1 \right\}.
\end{equation}
The quantity $m_\mu^U$ arises in the theory of optimal stochastic control as it has a natural interpretation as the ``cost of moving a particle from $x$ to $B_1$" for a certain controlled diffusion process (see Remark~\ref{r.control}). As $m_\mu^{\Rd}$ can thus be interpreted as a kind of ``distance,"~\eqref{e.sc} with $U=\Rd$ together with appropriate boundary conditions on $\partial B_1$ is sometimes called the \emph{metric problem}.

The analysis of the metric problem in the case of first-order equations has a long history and goes back at least to Lions~\cite{Li}. The results in this section are related to some well-posedness results which appeared in~\cite{AS1}, although the treatment here is more general. The results in this section, in particular the continuity of $m_\mu^U$, are needed in the forthcoming papers on stochastic homogenization~\cite{AC,AT}, wherein they play an important technical role in ``localizing" the dependence of the maximal subsolutions $m_\mu$ on the random environment.

For $m_\mu^U$ to be well-defined, we require that the admissible set in its definition is nonempty. We introduce the critical parameter $\overline H_*(U)\in\R$ for which there exist subsolutions in $U$:
\begin{equation*}
\overline H_*(U) :=\inf \left\{ \mu\in\R \, : \, \exists  \ w\in \USC(U) \ \mbox{satisfying} \ -\tr\left( A(x) D^2u \right) + H(Du,x) \leq \mu  \ \mbox{in} \ U\right\}.
\end{equation*}

The main result of this section asserts that $m_\mu^U$ is locally Lipschitz continuous in $U\setminus \overline B_1$ and characterizes it as the maximal solution of the following boundary-value problem:
\begin{equation} \label{e.sc.bvp}
\left\{ \begin{aligned}
 & -\tr\left( A(x) D^2u \right) + H(Du,x) \leq \mu &  \mbox{in} & \  U,\\
 & -\tr\left( A(x) D^2u \right) + H(Du,x) = \mu &  \mbox{in} & \ \overline U \setminus \overline B_1,\\
& u \leq 0 & \mbox{in} & \ B_1.
\end{aligned} \right.
\end{equation}
Note the state-constrained boundary conditions on $\partial U$.

\begin{thm}
\label{t.sc}
Let~$U\subseteq \Rd$ satisfy~\eqref{e.sc.U}  and $\mu \geq\overline H_*(U)$. Then the function $m_\mu^U$ defined in~\eqref{e.scsol} belongs to $C^{0,1}_{\mathrm{loc}}( U\setminus \overline B_1)$ and satisfies~\eqref{e.sc.bvp}.
\end{thm}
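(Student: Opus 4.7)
My plan is to mirror the proof of Theorem~\ref{t.state.q}, now with two new features to accommodate: the inner Dirichlet-like constraint $u \leq 0$ on $\overline B_1$ and the state-constrained outer boundary $\partial U$. As a preliminary reduction, I exhaust $U$ by an increasing sequence of bounded smooth domains $U_k \supseteq \overline B_1$, prove the result for each $U_k$, and pass to a locally uniform limit using the Lipschitz estimate of Theorem~\ref{t.LIP} and the monotonicity $k \mapsto m_\mu^{U_k}$. So I assume throughout that $U$ is bounded and smooth.

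I begin with the a priori bounds. The admissible class in~\eqref{e.scsol} is nonempty: the hypothesis $\mu \geq \overline H_\ast(U)$ produces a subsolution which, after translation by a constant, is $\leq 0$ on $\overline B_1$. In the subquadratic case $1 < m \leq 2$, Lemmas~\ref{l.dvdobs} and~\ref{l.dvdpsiops} yield the matching blow-up rates $c\zeta_U - C \leq m_\mu^U \leq C\zeta_U$ near $\partial U$ exactly as in Step~1 of the proof of Theorem~\ref{t.state.q}. A separate \emph{upper inner barrier} at $\partial B_1$ is also needed: the smooth function $\phi(x) := K(|x|-1)$ is a strict supersolution in $\{1 < |x| < 1+r_0\}$ for $K$ large enough depending on $\mu$, $a_R$, $M_R$, $\Lambda_1$, $\Lambda_2$, so Theorem~\ref{a.comp.strict} gives $w \leq \phi$ near $\partial B_1$ from outside for every admissible $w$, and hence $(m_\mu^U)^\ast \leq 0$ on $\partial B_1$. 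Standard Perron bump arguments identical to Step~3 in the proof of Theorem~\ref{t.state.q} then show that $(m_\mu^U)^\ast$ is a subsolution of the PDE in $U \setminus \overline B_1$ and that $(m_\mu^U)_\ast$ is a supersolution in $\overline U \setminus \overline B_1$ with state constraints on $\partial U$.

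The heart of the proof, and the main obstacle, is the continuity step. Following the Theorem~\ref{t.state.q} strategy, I define the minimal supersolution
\[
\widehat m(x) := \inf\bigl\{ w(x) : w \in \LSC(\overline U),\ w \text{ is a supersolution of~\eqref{e.sc.bvp} in } \overline U \setminus \overline B_1,\ w \geq 0 \text{ on } \overline B_1,\ \inf_{\overline U} w > -\infty \bigr\}.
\]
The admissible class is nonempty ($C\zeta_U$ belongs for $C$ large by~\eqref{e.zetaUpro}), and by a barrier argument analogous to Step~2 of the proof of Theorem~\ref{t.state.q} — with the boundary condition $w \geq 0$ on $\overline B_1$ playing the role of the inner Dirichlet data — one obtains $\widehat m \geq c\zeta_U - C$ in $\overline U \setminus \overline B_1$. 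Lemma~\ref{l.convex2} then ensures $w_\varepsilon := (1+\varepsilon)\widehat m - \varepsilon (m_\mu^U)^\ast$ is a supersolution of the PDE; combining $\widehat m \geq 0$ with the inner barrier bound $(m_\mu^U)^\ast \leq 0$ on $\overline B_1$ gives $w_\varepsilon \geq 0$ there, while the matching blow-up rates near $\partial U$ make $w_\varepsilon$ bounded below for $\varepsilon$ small. Hence $w_\varepsilon$ is admissible for $\widehat m$, and rearranging $\widehat m \leq w_\varepsilon$ yields $(m_\mu^U)^\ast \leq \widehat m$. For the reverse inequality $\widehat m \leq (m_\mu^U)_\ast$, I construct a complementary lower barrier near $\partial B_1$ that produces a subsolution attaining $0$ at a chosen point of $\partial B_1$, thereby concluding $(m_\mu^U)_\ast \geq 0$ on $\partial B_1$ from outside; this allows me to verify that the function equal to $(m_\mu^U)_\ast$ on $\overline U \setminus B_1$ and to $\max((m_\mu^U)_\ast,0)$ on $\overline B_1$ is LSC on $\overline U$, remains a supersolution on $\overline U \setminus \overline B_1$, and is therefore admissible for $\widehat m$. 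The chain $(m_\mu^U)^\ast \leq \widehat m \leq (m_\mu^U)_\ast \leq (m_\mu^U)^\ast$ collapses, proving continuity in $U \setminus \overline B_1$, and Theorem~\ref{t.LIP} upgrades this to $m_\mu^U \in C^{0,1}_{\mathrm{loc}}(U \setminus \overline B_1)$. The delicate point throughout is the construction of the complementary lower barrier, which requires care in the regime where $\mu$ is small relative to $\sup_x H(0,x)$ and for which a rescaling combined with a radial ansatz of the form $\sigma(x) = -f(|x|-1)$ with $f$ concave is the natural device.
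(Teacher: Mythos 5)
Your high-level strategy---defining the minimal supersolution $\widehat m$ and deploying Lemma~\ref{l.convex2} to squeeze $m_\mu^U$ between the two envelopes---matches the paper's, but the execution misses the central technical device: the $\ep$-perturbation of the right-hand side. You claim that ``Lemmas~\ref{l.dvdobs} and~\ref{l.dvdpsiops} yield the matching blow-up rates $c\zeta_U - C \leq m_\mu^U \leq C\zeta_U$ near $\partial U$ exactly as in Step~1 of the proof of Theorem~\ref{t.state.q}.'' This fails. Lemma~\ref{l.dvdpsiops} genuinely \emph{requires} $\eta>0$ to produce a subsolution blowing up like $\zeta_U$ at $\partial U$: by~\eqref{e.zetaUpro}, both $\Lambda^2|D^2(k\zeta_U)|$ and $a|D(k\zeta_U)|^m$ scale like $(\db_U)^{-m/(m-1)}$ and are nonnegative, so with a degenerate $A$ there is no cancellation to exploit, and without the coercive $\eta(\db_U)^{-m/(m-1)}$ term on the right no choice of $k>0$ makes their sum controllable. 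This is exactly why Step~1 of Theorem~\ref{t.state.q} is proved under the coercivity hypothesis~\eqref{e.state.fcoer} and the assumption is only removed afterwards by a limit, and it is exactly why the paper's proof of Theorem~\ref{t.sc} replaces the equation by the perturbed one~\eqref{e.sc.ep}, proves everything for $m_\mu^\ep$, and passes to $\ep\to 0$ at the end (Step~6). Relatedly, you invoke Lemma~\ref{l.dvdobs} for the \emph{upper} barrier, but that lemma needs $\delta>0$ to absorb constants via $u - M/\delta$, whereas the metric problem has no zeroth-order term; the paper instead establishes the interior bound $\sup_{U_\delta} m_\mu^1 \leq C$ by the covering argument with chained radial barriers $\psi_0,\dots,\psi_{n-1}$ (Step~1), and this interior bound is exactly what is needed as boundary data on $\partial U_\delta$ before the blow-up barrier $C\zeta_{U_s}$ can be compared against $m_\mu^\ep$ in $U_s\setminus \overline U_\delta$; you have omitted this step and so have nothing to compare against.

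A secondary issue: the ``complementary lower barrier'' near $\partial B_1$, which you single out as the delicate point, is unnecessary. The condition $w\leq 0$ on $\overline B_1$ is a constraint on the subsolutions in the definition of $m_\mu^U$, not a Dirichlet boundary condition to be matched by both envelopes. The paper defines $\widehat m_\mu^\ep$ over supersolutions on $U\setminus B_1$ that blow up at $\partial U$ with \emph{no condition} at $\partial B_1$; then $(m_\mu^\ep)_*$ is admissible automatically (it is a supersolution by Perron and blows up at $\partial U$ by Step~2), which gives $\widehat m_\mu^\ep \leq (m_\mu^\ep)_*$ at once, and the reverse inequality follows from Lemma~\ref{l.convex2} as in your proposal. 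You have set up a Dirichlet comparison at $\partial B_1$ where none is needed, and the radial ansatz you propose there, however it is carried out, does not close the genuine gap at $\partial U$. Finally, the case $\mu=\overline H_*(U)$ is asserted to be covered by ``the hypothesis produces a subsolution,'' but at the threshold value this is not automatic; the paper handles it by a separate limiting argument using the monotonicity of $\mu\mapsto m_\mu^U$ and the normalization $\sup_{B_1} m_\mu^U=0$.
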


The proof of Theorem~\ref{t.sc} in the superquadratic case ($\m>2$) is relatively easy due to Lemma~\ref{l.superquad}. We present this argument separately before considering the more interesting subquadratic case that $1<\m\leq 2$. 

\begin{proof}[{\bf Proof of Theorem~\ref{t.sc} in the case $\m > 2$}]
By Lemma~\ref{l.superquad}, the family of subsolutions of~\eqref{e.sc} belonging to $\USC(U)$ which are nonpositive on $\overline B_1$ is bounded in $C^{0,\beta}(U_\delta \cap B_R)$ for each $R,\delta > 0$ and $\beta:= (\m-2)/(\m-1)$. Since $U$ is connected, we deduce that $m_\mu^U$ is locally bounded in $U$ and $m_\mu^U \in C^{0,\beta}(U_\delta\cap B_R)$ for every $R,\delta > 0$. In particular, $m_\mu^U\in C^{0,\beta}_{\mathrm{loc}}(U)$. 

We have left to check that $m_\mu^U$ satisfies~\eqref{e.sc.bvp} by the usual Perron argument. It is clear from its definition as a supremum of subsolutions that $m_\mu^U$ is a subsolution of~\eqref{e.sc}. To argue that it is a supersolution, we select a smooth function $\phi \in C^{\infty}(\overline U\setminus \overline B_1)$ and a point $x_0\in  \overline U \setminus \overline B_1$ such that 
\begin{equation}\label{e.sc.supq.touch}
m_\mu^U - \phi \quad \mbox{has a strict local minimum at} \ x_0. 
\end{equation}
We must show that 
\begin{equation}\label{e.sc.supq.wts}
-\tr(A(x_0)D^2\phi(x_0)) + H(D\phi(x_0),x_0) \geq \mu. 
\end{equation}
Assuming on the contrary that~\eqref{e.sc.supq.wts} is false, we use the smoothness of $\phi$ to find $r,\theta> 0$ such that 
\begin{equation*}\label{}
-\tr(A(x_0)D^2\phi(x_0)) + H(D\phi(x_0),x_0) \leq \mu - \theta \quad \mbox{in} \ B_{r}(x_0)\cap \overline U.
\end{equation*}
By adding a constant to $\phi$ and shrinking $r>0$, if necessary, we may assume by~\eqref{e.sc.supq.touch} that 
\begin{equation*}\label{}
m^U_\mu(x_0) - \phi(x_0) < 0 < m^U_\mu - \phi \quad \mbox{on} \  \partial B_r(x_0)\cap \overline U.
\end{equation*}
Define the function
\begin{equation*}\label{}
w(x):= \begin{cases}
m^U_\mu(x) & \mbox{in} \ \overline U \setminus  \overline B_r(x_0), \\
\max\{ m^U_\mu(x) , \phi(x) \} & \mbox{in} \ \overline U \cap \overline B_r(x_0).
\end{cases}
\end{equation*}
It is clear from construction that $w$ is a subsolution of~\eqref{e.sc} and hence an admissible function in the definition of $m_\mu^U$. Thus $w\leq m_\mu^U$ in $U$. This contradicts the fact that $w = \phi > m_\mu^U$ in a neighborhood of $x_0$. 
\end{proof}

\begin{proof}[{\bf Proof of Theorem~\ref{t.sc} in the case $1<\m\leq 2$}]

We use the method introduced in the proof of Theorem~\ref{t.state.q}. We assume first that $U$ is bounded and smooth and $\mu > \overline H_*(U)$, and remove these assumptions in the last step. For convenience, we drop the dependence of $m_\mu^U$ on $U$, writing $m_\mu=m_\mu^U$, until the last step.

The strategy is to consider maximal subsolutions of the following perturbed equation
\begin{equation} \label{e.sc.ep}
-\tr\left( A(x) D^2w \right) + H(Dw,x) = \mu +\ep (\db_U)^{-\frac{m}{m-1}} \quad \mbox{in}  \  U\setminus \overline B_1.
\end{equation}
The nonnegative function $\db_U$ is as defined in Section~\ref{BVSC}. We denote by $m_\mu^\ep$ the corresponding maximal subsolution of~\eqref{e.sc.ep}, i.e.
\begin{equation} \label{max.sc.ep}
m_\mu^{\ep}(x):= \sup\left\{ w(x) \,:\, w\in \USC( U ) \ \mbox{is a subsolution of~\eqref{e.sc.ep} and} \ w \leq 0 \ \mbox{on} \ \overline B_1 \right\}.
\end{equation}
It is clear that $m_\mu^\ep$ is monotone in~$\ep$, since $\db_U$ is nonnegative:  for all $0<\ep <\ep'<1$, we have
\begin{equation} \label{mmuepmono}
m_\mu \le m_\mu^\ep \le m_\mu^{\ep'} \quad \mbox{in} \ U.
\end{equation}
We first argue that $\{ m_\mu^\ep\}_{0<\ep<1}$ is uniformly Lipschitz continuous in each $U_\delta\setminus \overline B_1$ and satisfies~\eqref{e.sc.ep} and then obtain the theorem after arguing that $m^\ep_\mu \to m_\mu$ as $\ep \to 0$. 

{\it Step 1.} We derive upper bounds for $m_\mu^1$ on $U_\delta$. Precisely, we claim that, for some $C>0$, 
\begin{equation} \label{e.big.gun1}
\sup_{U_\delta} m_\mu^1 \leq C. 
\end{equation}
Note that this gives uniform (in $0<\ep<1$) upper bounds for $m^\ep_\mu$ in view of~\eqref{mmuepmono}. To obtain this estimate it is necessary to use a covering argument, since the bound depends on the geometry of~$U$. We may select $0 < r< \delta/4$ and $K \in \mathbb N$, depending only on~$U$, such that $B_{1+4r} \subseteq U_\delta$ and, for each $z \in U_\delta$, there exist $n\leq K$ and $x_1,\ldots, x_n \in U_{4r}\setminus \overline B_1$ such that
\begin{equation*}
x_1 \in B_{1+r}, \quad x_n=z \quad \mbox{and} \quad |x_{i+1}-x_i|\leq 2r  \quad \text{for every}\ 1\le i \le n-1.
\end{equation*}
By exhibiting explicit, smooth supersolutions, we will show that 
\begin{equation} \label{e.big.gun2}
\sup_{B_{1+r}} m^1_\mu \leq C
\end{equation}
and, for each $i\geq 1$,
\begin{equation}\label{e.big.gun3}
\sup_{B(x_{i+1},r)} m^1_\mu \leq C + \sup_{B(x_{i},r)} m^1_\mu,
\end{equation}
for some $C> 0$ to be determined. This yields the desired estimate.

Here are the test functions: for $x \in B_{1+4r} \setminus B_1$, we set
\begin{equation*}\label{}
\psi_0(x):= \begin{cases} 
\left (\dist(x,\partial B_{1+4r})\right)^{-\frac{2-\m}{\m-1}} & 1< \m <2, \\
 1 - \log\left(  \dist(x,\partial B_{1+4r})\right)  & \m = 2,
\end{cases}
\end{equation*}
and, for each $1 \leq i \leq n-1$ and $x \in B(x_i,4r) \setminus B(x_i,r)$,
\begin{equation*}\label{}
\psi_i(x):= \begin{cases} 
\left (\dist(x,\partial B(x_i,4r))\right)^{-\frac{2-\m}{\m-1}} &   1< \m <2, \\
 1 - \log\left(  \dist(x,\partial B(x_i,4r)\right)  & \m = 2.
\end{cases}
\end{equation*}
Notice that $\psi_i$ is smooth for all $0\le i \le n-1$. Set
\begin{equation*}
\tilde \mu:=\mu+\sup_{U_\delta} (\db_U)^{-\frac{m}{m-1}}.
\end{equation*}
A routine computation confirms that for~$C>0$ sufficiently large (but independent of~$i$,~$x_i$ and~$r$), the function $w:=C\psi_0$ satisfies
\begin{equation}\label{C.large1}
-\tr \left (A(x) D^2w \right) + H\left (Dw,x\right) >\tilde \mu \quad \text{in} \ B_{1+4r} \setminus \overline B_1
\end{equation}
and, for each $1\leq i \leq n-1$, the function $w:=C\psi_i$ satisfies
\begin{equation}\label{C.large2}
-\tr \left (A(x) D^2w \right) + H\left (Dw,x\right) >\tilde \mu \quad \text{in} \ B(x_i,4r) \setminus \overline B(x_i,r).
\end{equation}
Since $\psi_0(x) \to +\infty$ and $\psi_i(x)\to +\infty$ as $x\to \partial B_{1+4r}$ and $x\to \partial B(x_i,4r)$, respectively, we deduce that the functions $m_\mu^\ep - C\psi_0$ and  $m_\mu^\ep - C\psi_i$ do not possess local maximums in $B_{1+4r} \setminus \overline B_1$ and $B(x_i,4r) \setminus \overline B(x_i,r)$, respectively. It follows that
\begin{equation} \label{e.mulocbns}
m^1_\mu \leq C\psi_0 - \min_{ \partial\overline B_1} \big(C\psi_0\big) \quad \text{in} \ B_{1+4r}\setminus \overline B_1
\end{equation}
and
\begin{equation*}
m^1_\mu \leq C\psi_i+ \max_{\overline B(x_i,r)} \big(m_\mu^1- C\psi_i\big) \quad \text{in} \ B(x_i,4r) \setminus \overline B(x_i,r).
\end{equation*}
In view of the fact that $\overline B(x_{2},r) \subseteq \overline B_{1+3r} \subseteq B_{1+4r}$ and $\overline B(x_{i+1},r) \subseteq \overline B(x_i,3r) \subseteq B(x_i,4r)$, we obtain the estimates~\eqref{e.big.gun2} and~\eqref{e.big.gun3}. This completes the proof of~\eqref{e.big.gun1}. 

{\it Step 2.} We show that there exist constants $c,C>0$, which may  depend on $\ep$, such that
\begin{equation}\label{c-C-ep-m}
c \zeta_U-C \leq m_\mu^\ep \leq C \zeta_U \quad \text{in}\ U \setminus \overline U_\delta.
\end{equation}
Here $\zeta_U$ is defined as in~\eqref{e.sc.phiU}. By a direct computation similar to the one in the previous section, we can find positive constants $\delta,c,C>0$ such that $c\zeta_U$ is a smooth, strict subsolution of \eqref{e.sc.ep} in $U \setminus \overline U_\delta$ (here is where we need the help of the term $\ep \db_U$ on the right-hand side) and, for every $0<s<\delta$, the function $C\zeta_{U_s}$ is a smooth, strict supersolution of \eqref{e.sc.ep} in $U_s \setminus \overline U_\delta$. By step 1 above, we can pick~$\delta > 0$ sufficiently small and~$C>0$ sufficiently large so that $m_\mu^\ep <C \zeta_{U_s}$ on $\partial U_\delta$.  Since $\zeta_{U_s}$ is smooth, the definition of viscosity subsolution yields that $m_\mu^\ep \le C\zeta_{U_s}$ in $U_s \setminus \overline U_\delta$, since this must be true for any function in the admissible class in the definition of $m_\mu^\ep$. Sending $s\to 0$ implies the second inequality in  \eqref{c-C-ep-m}. To get the first inequality, we use the fact that $\mu > \overline H_*$ to select $\nu < \mu$ and a subsolution $v\in \USC(U)$ of
\begin{equation*} \label{}
-\tr\left( A(x) D^2v \right) + H(Dv,x) = \nu  \quad \mbox{in}  \  U.
\end{equation*}
By subtracting a constant, we may assume that $\sup_{B_1} v = 0$. Now consider the function
\begin{equation*}
\widetilde v(x) :=
\max \big\{v(x), c\zeta_U (x)- \max_{\overline U_{\delta/2}} \left( c\zeta_U -v\right) \big\}.
\end{equation*}
Observe $\widetilde v$ is equal to $v$ in $U_{\delta/2}$ and hence a subsolution of \eqref{e.sc.ep} in $U$. By the definition of~$m_\mu^\ep$, we deduce that $m_\mu^\ep \geq \widetilde v$, which gives the first inequality of~\eqref{c-C-ep-m} in view of the fact that $\max_{\overline U_{\delta/2}} (c\zeta_U - v) < +\infty$.

In the next three steps we show that $m_\mu^\ep=\widehat m_\mu^\ep$, where we introduce $\widehat m_\mu^\ep$ as the minimal supersolution of~\eqref{e.sc.ep}, defined by
\begin{multline*}
\widehat m_\mu^\ep(x):=\inf \Big\{w(x)\, : \, w \in \LSC(U \setminus B_1)  \ \text{in a supersolution of \eqref{e.sc.ep} on $U\setminus B_1$, and } \\ w(x) \rightarrow +\infty \ \mbox{as} \ x \to \partial U \Big\}.
\end{multline*}

{\it Step 3.} Estimates on the blow-up rate of $\widehat m_\mu^\ep$ near $\partial U$: we show that there exists $c>0$ such that
\begin{equation}\label{c-hat-ep-m}
\widehat m_\mu^\ep \geq c\zeta_U -C \quad \text{in} \ U \setminus \overline U_\delta.
\end{equation}
As in Step 2, we have $c\zeta_{U^s}$ is a smooth, strict subsolution of  \eqref{e.sc.ep} in $U \setminus \overline U_\delta$ provided that  $c,s>0$ are chosen sufficiently small. Let $w \in \LSC(U\setminus B_1)$ be any function in the admissible class in the definition of $\widehat m_\mu^\ep$. 
Let $\widetilde v_s$ be defined in the same way as $\widetilde v$, but with $U^s$ in place of $U$. Then $\widetilde v_s$ is a strict subsolution of~\eqref{e.sc.ep}, and using the fact that $w(x)\to +\infty$ as $x\to \partial U$,  the comparison principle yields that $w \geq \widetilde v_s$. Sending $s\to 0$ yields that $w \geq \tilde v$. This completes the proof of~\eqref{c-hat-ep-m}.

{\it Step 4.} We show that $(m_\mu^\ep)^*$ and $(m_\mu^\ep)_*$ are a subsolution and a supersolution of \eqref{e.sc.ep}, respectively. This is by the standard Perron argument which is nearly the same as in the proof of the Theorem in the case $m>2$, above. Therefore we omit the argument. 

{\it Step 5.} We show finally that $m_\mu^\ep=\widehat m_\mu^\ep$. According to Lemma~\ref{l.convex2}, for every $\alpha > 0$, the function $w:= (1+\alpha) \widehat m_\mu^\ep - \alpha m_\mu^\ep$ is a supersolution of~\eqref{e.sc.ep} in $U \setminus \overline B_1$. Moreover, by  \eqref{c-C-ep-m} and~\eqref{c-hat-ep-m}, for sufficiently small $\alpha> 0$, the function $w$ satisfies $w \geq \frac12 c \zeta_U$ near $\partial U$. In particular, $w(x) \to +\infty$ as $x\to \partial U$. Therefore, we conclude from the definition of $\widehat m_\mu^\ep$ that $\widehat m_\mu^\ep \leq w$. A rearrangement of this inequality gives $m_\mu^\ep \leq \widehat m_\mu^\ep$. In view of the fact that, by Step 4, we have $m_\mu^\ep \geq (m_\mu^\ep)_* \geq \widehat m_\mu^\ep$, we deduce that $(m_\mu^\ep)^*=m_\mu^\ep = \widehat m_\mu^\ep = (m_\mu^\ep)_* $ and in particular, $m_\mu^\ep$ is continuous in $U \setminus \overline B_1$ and is a solution of~\eqref{e.sc.ep} in $U \setminus \overline B_1$.

{\it Step 6.} We complete the proof of the theorem in the case that $U$ is bounded. In view of Step 5 and Theorem~\ref{t.LIP}, $m^\ep_\mu\in C^{0,1}_{\mathrm{loc}}(U\setminus \overline B_1)$. Define
\begin{equation*} \label{}
m_\mu^0(x):= \limsup_{\ep\to0, \ y\to x} m^\ep_\mu(y).
\end{equation*}
As a half-relaxed limit, $m_\mu^0$ is a subsolution of~\eqref{e.sc} in $U$ and clearly $m_\mu^0 \leq 0$ in $\overline B_1$. Hence $m_\mu^0 \leq m_\mu$ in $U$ by the definition of the latter. By~\eqref{mmuepmono}, $m_\mu^0$ is the locally uniform pointwise limit of $m_\mu^\ep$ in $U\setminus B_1$, hence $m^0_\mu \in C^{0,1}_{\mathrm{loc}}(U\setminus \overline B_1)$, and we have $m^0_\mu \geq m_\mu$ in $U\setminus \overline B_1$. Thus $m^0_\mu \equiv m_\mu$ in $U\setminus \overline B_1$ and in particular $m_\mu$ belongs to $C^{0,1}_{\mathrm{loc}}(U\setminus \overline B_1)$. The stability of viscosity solutions under uniform limits yields that $m_\mu$ is a supersolution of~\eqref{e.sc} in $U$. 

\emph{Step 7.} We remove the assumption that $U$ is bounded and smooth and that $\mu > \overline H_*(U)$. It is immediate from the definition of $m^U_\mu$ that, for all domains $U, V\subseteq \Rd$ satisfying~\eqref{e.scsol}, we have 
\begin{equation}\label{e.sc.mono}
V \subseteq U \quad \mbox{implies that} \qquad m_\mu^U \leq m^V_\mu \quad \mbox{in} \ V.
\end{equation}
Therefore, for general $U$, we simply take an increasing sequence of bounded, smooth domains $V_1 \subset V_2 \subset\ldots$ such that $U = \cup_{k\in\N} V_k$ and deduce, in view of the argument above and the fact that the functions $m_\mu^{V_k}$ are locally Lipschitz in each $V_{j}$ uniformly in $k>j$, that $m_\mu^U$ is the local uniform limit of $m_\mu^{V_k}$. To obtain the result for $\mu=\overline H_*(U)$, we argue similarly, using the monotonicity of the map $\mu \mapsto m_\mu$ and the fact that $\sup_{B_1} m_\mu^U = 0$. 
\end{proof}

An important property of $m_\mu = m_\mu^{\Rd}$ is its \emph{subadditivity}, which is summarized in the following lemma. To state it, we let $m_\mu(\cdot,z)$ denote the analogue of $m_\mu$ with $B_1(z)$ in place of $B_1$, that is, for every $\mu > \overline H_*(\Rd)$
\begin{equation*} \label{}
m_\mu(y,z):= \sup\left\{ w(y) \,:\, w\in \USC( \Rd ) \ \mbox{is a subsolution of~\eqref{e.sc} and} \ w \leq 0 \ \mbox{on} \ \overline B_1(z) \right\}.
\end{equation*}
We also denote
\begin{equation*} \label{}
\widetilde m_\mu(y,z):= \sup_{B_1(y)} m_\mu(\cdot,z). 
\end{equation*}

\begin{lem}
\label{l.subadd}
For every $\mu > \overline H_*(\Rd)$ and $x,y,z\in\Rd$,
\begin{equation} \label{e.subadd}
\widetilde m_\mu(y,z) \leq \widetilde m_\mu(y,x) + \widetilde m_\mu(x,z).
\end{equation}
\end{lem}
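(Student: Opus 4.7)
The plan is to reduce the subadditivity of $\widetilde m_\mu$ to the pointwise inequality
\begin{equation*}
m_\mu(\,\cdot\,,z) \leq m_\mu(\,\cdot\,,x) + \widetilde m_\mu(x,z) \quad \text{on } \Rd,
\end{equation*}
from which~\eqref{e.subadd} is immediate by taking the supremum over $B_1(y)$ on both sides. The key point is that Theorem~\ref{t.sc} (applied with $U=\Rd$) tells us more than just that $m_\mu(\,\cdot\,,z)$ is a subsolution of~\eqref{e.sc} in $\Rd\setminus\overline B_1(z)$: it is in fact a subsolution of the inequality in~\eqref{e.sc.bvp} on \emph{all} of $\Rd$, including inside the obstacle region $\overline B_1(z)$. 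Combined with local Lipschitz continuity, this is everything needed.

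The candidate competitor is the translated function
\begin{equation*}
\widetilde w(\,\cdot\,) := m_\mu(\,\cdot\,,z) - \widetilde m_\mu(x,z).
\end{equation*}
First I would note that $\widetilde w$ is a subsolution of~\eqref{e.sc} on $\Rd$ (hence in particular on $\Rd\setminus\overline B_1(x)$), since subtracting a constant preserves subsolutions of an equation with no zeroth-order term and $\widetilde m_\mu(x,z)$ is finite by the local boundedness of $m_\mu(\,\cdot\,,z)$ from Theorem~\ref{t.sc}. Second, I would check that $\widetilde w \leq 0$ on $\overline B_1(x)$: by the continuity of $m_\mu(\,\cdot\,,z)$,
\begin{equation*}
\sup_{\overline B_1(x)} m_\mu(\,\cdot\,,z) = \sup_{B_1(x)} m_\mu(\,\cdot\,,z) = \widetilde m_\mu(x,z),
\end{equation*}
which gives $\widetilde w \leq 0$ on $\overline B_1(x)$ directly from the definition. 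Therefore $\widetilde w$ lies in the admissible class for the maximal subsolution $m_\mu(\,\cdot\,,x)$, and maximality yields $\widetilde w \leq m_\mu(\,\cdot\,,x)$ on $\Rd$. Rearranging gives the desired pointwise bound, and taking the supremum over $B_1(y)$ produces~\eqref{e.subadd}.

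There is no real obstacle: the argument is a one-line Perron-style comparison once one has the continuity and global subsolution property of $m_\mu$, both of which are supplied by Theorem~\ref{t.sc}. The only mildly subtle point is the equality of the sup over $B_1(x)$ and over $\overline B_1(x)$, which would fail for a merely upper semicontinuous subsolution but is routine here because $m_\mu(\,\cdot\,,z)\in C^{0,1}_{\mathrm{loc}}(\Rd\setminus\overline B_1(z))$ and bounded continuous on $\overline B_1(z)$ as well (thus continuous on the whole space, since $\overline B_1(z)$ has empty interior in the complement of $\Rd\setminus\overline B_1(z)$, and the supremum is attained on the closed ball either way).
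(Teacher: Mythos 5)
Your argument is the same as the paper's: take the competitor $m_\mu(\cdot,z)-\widetilde m_\mu(x,z)$, observe that subtracting a constant preserves subsolutions of the zeroth-order-free equation~\eqref{e.sc}, note nonpositivity on the ball around $x$, invoke maximality of $m_\mu(\cdot,x)$, then take the supremum over $B_1(y)$. The one place where you try to add detail beyond the paper --- your final parenthetical justifying $\sup_{\overline B_1(x)} m_\mu(\cdot,z) = \sup_{B_1(x)} m_\mu(\cdot,z)$ --- is garbled: the statement that ``$\overline B_1(z)$ has empty interior in the complement of $\Rd\setminus\overline B_1(z)$'' is false (that complement \emph{is} $\overline B_1(z)$), and Theorem~\ref{t.sc} does not assert continuity of $m_\mu(\cdot,z)$ on $\overline B_1(z)$, only on $\Rd\setminus\overline B_1(z)$. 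The correct observation is simply that every boundary point of $B_1(x)$ lying outside $\overline B_1(z)$ is a limit of interior points where $m_\mu(\cdot,z)$ is continuous, while on $\overline B_1(z)$ one has $m_\mu(\cdot,z)\leq 0$; the paper itself passes over this point silently, stating nonpositivity on $B_1(x)$ and appealing directly to the definition (which uses $\overline B_1(x)$). So the content and structure of your proof match the paper's, and the extra remark, while pointing at a genuine technicality, should be stated more carefully.
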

\begin{proof}
Observe that $m_\mu(\cdot,z)- \sup_{B_1(x)} m_\mu(\cdot,z)$ is a subsolution of~\eqref{e.sc} in $\Rd$ which is nonpositive on $B_1(x)$. It therefore follows from the definition of $m_\mu(\cdot,x)$ that, for all $\xi\in \Rd$,
\begin{equation*} \label{}
m_\mu(\xi,z) - \widetilde m_\mu(x,z) = m_\mu(\xi,z)- \sup_{B_1(x)} m_\mu(\cdot,z) \leq m_\mu(\xi,x).
\end{equation*}
Taking the supremum over $\xi\in B_1(y)$ yields the lemma. 
\end{proof}

\begin{remark}{(Stochastic optimal control interpretation of $m_\mu$)} 
\label{r.control}
We think of $m_\mu(y,z)$ as measuring the ``cost" imposed by the environment for moving a particle from $y$ to $\overline B_1(z)$. We briefly summarize how this is made rigorous. We may write
\begin{equation*} \label{}
m_\mu(y,z,\omega) = \inf_{\theta_z, \,\alpha(\cdot)}  E_{\alpha,y} \left[ \int_{0}^{\theta_z} \left( \mu + L(-\alpha_s,X_s,\omega) \right)\, ds \, : \, \theta_z < \infty \right]
\end{equation*}
where $L$ is the Legendre-Fenchel transform of $H$, $\alpha_s$ is an $\Rd$-valued adapted process, $\theta_z$ is a stopping time, and with respect to a probability measure $P_{\alpha,y}$ on the space of paths (with expectation denoted by $E_{\alpha,y}$), the process $X_s$ solves the following SDE:
\begin{equation*} \label{}
dX_s = \alpha_s \, ds + \sigma(X_s,\omega) \, dB_s,
\end{equation*}
where $\sigma:=  (2 A)^{\frac12}$, $B_s$ is Brownian motion with respect to $P_{\alpha,y}$, and the control $\theta_z$ is an adapted stopping time for which $X_{\theta_z} \in B_1(z)$. The interpretation is that the controller can choose (or not) to stop if the diffusion is in $B_1(z)$. The proof is a straightforward exercise involving the dynamic programming principle; as we don't use the stochastic control interpretation in our arguments, we omit the argument.
\end{remark}

\begin{remark}
It is immediate from the convexity of $H$ and Lemma~\ref{l.convex} that the map $\mu \mapsto m_\mu$ is concave.
\end{remark}

\begin{remark}
The Lipschitz estimate~\eqref{e.Lmu} yields, for every $x\in \Rd\setminus B_3$, the bound 
\begin{equation} \label{e.oscbnd}
\osc_{B_1(x)} m_\mu \leq C \left[ \left( \frac{(1+\Lambda_1)^{1/2} \Lambda_2 }{a_2(x)} \right)^{2/(\m-1)} + \left( \frac{M_2(x)+\mu}{a_2(x)} \right)^{1/\m}  \right],
\end{equation}
where $a_2(x)$ and $M_2(x)$ are constants as in~\eqref{e.Hsubq} and~\eqref{e.HsubqLip} for the shifted  Hamiltonian $H(\cdot,\cdot-x)$. We may extend~\eqref{e.oscbnd} to an oscillation bound for all $x$ as follows. First, we have the upper bound $\sup_{x\in B_4} m_\mu(x,0) \leq C$ by Step~2 of the proof of Theorem 5.1. Next, as in the proof of Lemma~\ref{l.subadd}, we have, for every $x\in B_4$,
\begin{equation*} \label{}
m_\mu(x,0) \geq m_\mu(x,z) - \sup_{B_1} m_\mu(\cdot,z) \geq - \osc_{B_4} m_\mu(\cdot,z). 
\end{equation*}
Taking $|z| >7$ and combining this with~\eqref{e.oscbnd}, we obtain that, for every $x\in \Rd$,
\begin{equation} \label{e.oscbnd2}
\osc_{B_1(x)} m_\mu \leq C \left[ \left( \frac{(1+\Lambda_1)^{1/2} \Lambda_2 }{a_5(x)} \right)^{2/(\m-1)} + \left( \frac{M_5(x)+\mu}{a_5(x)} \right)^{1/\m}  \right].
\end{equation}
\end{remark}

\begin{remark}
Theorem~\ref{t.sc} may be extended to other ``target sets." Fix a compact set $K \subseteq \Rd$, assume that $U \subseteq \Rd$ is a domain satisfying
\begin{equation*} \label{}
K+\overline B_1 \subseteq U, \quad U \ \ \mbox{and} \ \ U\setminus (K+\overline B_1) \ \  \mbox{are connected}
\end{equation*}
and define, for every $\mu \geq \overline H_*(U)$,
\begin{equation*} \label{}
m^U_\mu(x,K):= \sup\left\{ w(x) \,:\, w\in \USC( U ) \ \mbox{is a subsolution of~\eqref{e.sc} and} \ w \leq 0 \ \mbox{on} \ K+\overline B_1 \right\}.
\end{equation*}
The argument in the proof of Theorem~\ref{t.sc} yields that $m_\mu^U(\cdot,K) \in C^{0,1}_{\mathrm{loc}}(U\setminus (K+\overline B_1))$ is the maximal subsolution $u$ of~\eqref{e.sc} subject to $u \leq 0$ on $K+\overline B_1$. The main difference in the argument comes in the proof of the bound~\eqref{e.big.gun1}, in which one needs to compare to the test function $\phi_0(\cdot-y)$ for every point $y\in K$ to get the analogue of~\eqref{e.big.gun2}. This adaptation is straightforward and left to the reader.
\end{remark}

\subsection*{Acknowledgements}
S. Armstrong thanks the Forschungsinstitut f\"ur Mathematik (FIM) of ETH Z\"urich for support.

\bibliographystyle{plain}
\bibliography{viscoustheory}

\newcommand{\noop}[1]{} \def\cprime{$'$}
\begin{thebibliography}{10}

\bibitem{AC}
S.~N. Armstrong and P.~Cardaliaguet.
\newblock {Quantitative stochastic homogenization of viscous Hamilton-Jacobi
  equations}, \noop{3002}in preparation.

\bibitem{AS1}
S.~N. Armstrong and P.~E. Souganidis.
\newblock Stochastic homogenization of {H}amilton-{J}acobi and degenerate
  {B}ellman equations in unbounded environments.
\newblock {\em J. Math. Pures Appl. (9)}, 97(5):460--504, 2012.

\bibitem{AT}
S.~N. Armstrong and H.~V. Tran.
\newblock {Stochastic homogenization of viscous Hamilton-Jacobi equations and
  applications}.
\newblock arXiv:1310.1749 [math.AP].

\bibitem{Ba}
G.~Barles.
\newblock A weak {B}ernstein method for fully nonlinear elliptic equations.
\newblock {\em Differential Integral Equations}, 4(2):241--262, 1991.

\bibitem{BBBL}
G.~Barles, S.~Biton, M.~Bourgoing, and O.~Ley.
\newblock Uniqueness results for quasilinear parabolic equations through
  viscosity solutions' methods.
\newblock {\em Calc. Var. Partial Differential Equations}, 18(2):159--179,
  2003.

\bibitem{BF}
G.~Barles and F.~Da~Lio.
\newblock On the generalized {D}irichlet problem for viscous
  {H}amilton-{J}acobi equations.
\newblock {\em J. Math. Pures Appl. (9)}, 83(1):53--75, 2004.

\bibitem{BP}
G.~Barles and B.~Perthame.
\newblock Discontinuous solutions of deterministic optimal stopping time
  problems.
\newblock {\em RAIRO Mod\'el. Math. Anal. Num\'er.}, 21(4):557--579, 1987.

\bibitem{Bern}
S.~Bernstein.
\newblock Sur la g\'en\'eralisation du probl\`eme de {D}irichlet.
\newblock {\em Math. Ann.}, 69(1):82--136, 1910.

\bibitem{CIL}
M.~G. Crandall, H.~Ishii, and P.-L. Lions.
\newblock User's guide to viscosity solutions of second order partial
  differential equations.
\newblock {\em Bull. Amer. Math. Soc. (N.S.)}, 27(1):1--67, 1992.

\bibitem{DaL1}
F.~Da~Lio and O.~Ley.
\newblock Uniqueness results for second-order {B}ellman-{I}saacs equations
  under quadratic growth assumptions and applications.
\newblock {\em SIAM J. Control Optim.}, 45(1):74--106 (electronic), 2006.

\bibitem{DaL2}
F.~Da~Lio and O.~Ley.
\newblock Convex {H}amilton-{J}acobi equations under superlinear growth
  conditions on data.
\newblock {\em Appl. Math. Optim.}, 63(3):309--339, 2011.

\bibitem{CDLP}
I.~Capuzzo Dolcetta, F.~Leoni, and A.~Porretta.
\newblock H{\"o}lder estimates for degenerate elliptic equations with coercive
  hamiltonians.
\newblock {\em Trans. Amer. Math. Soc.}, 362(9):4511--4536, 2010.

\bibitem{FaSi}
A.~Fathi and A.~Siconolfi.
\newblock Existence of {$C^1$} critical subsolutions of the {H}amilton-{J}acobi
  equation.
\newblock {\em Invent. Math.}, 155(2):363--388, 2004.

\bibitem{K}
M.~Kobylanski.
\newblock Backward stochastic differential equations and partial differential
  equations with quadratic growth.
\newblock {\em Ann. Probab.}, 28(2):558--602, 2000.

\bibitem{KL}
S.~Koike and O.~Ley.
\newblock Comparison principle for unbounded viscosity solutions of degenerate
  elliptic {PDE}s with gradient superlinear terms.
\newblock {\em J. Math. Anal. Appl.}, 381(1):110--120, 2011.

\bibitem{LL}
J.-M. Lasry and P.-L. Lions.
\newblock Nonlinear elliptic equations with singular boundary conditions and
  stochastic control with state constraints. {I}. {T}he model problem.
\newblock {\em Math. Ann.}, 283(4):583--630, 1989.

\bibitem{Li}
P.-L. Lions.
\newblock {\em Generalized solutions of {H}amilton-{J}acobi equations},
  volume~69 of {\em Research Notes in Mathematics}.
\newblock Pitman (Advanced Publishing Program), Boston, Mass., 1982.

\bibitem{PLL}
P.-L. Lions.
\newblock Quelques remarques sur les probl\`emes elliptiques quasilin\'eaires
  du second ordre.
\newblock {\em J. Analyse Math.}, 45:234--254, 1985.

\bibitem{Mi}
H.~Mitake.
\newblock Asymptotic solutions of {H}amilton-{J}acobi equations with state
  constraints.
\newblock {\em Appl. Math. Optim.}, 58(3):393--410, 2008.

\end{thebibliography}

\end{document}